\theoremstyle{definition}
\theoremstyle{plain}
\newtheorem{coro}{Corollary}[section]
\newtheorem{lem}{Lemma}[section]
\newtheorem{theorem}{Theorem}[section]
\newtheorem{conj}{Conjecture}[section]
\newtheorem{prop}{Proposition}[section]
\newtheorem{question}{Question}
\theoremstyle{remark}
\newtheorem{rem}{Remark}[section]
\newcommand\N{\mathbb{N}}
\newcommand\Z{\mathbb{Z}}
\newcommand\Q{\mathbb{Q}}
\newcommand\R{\mathbb{R}}
\newcommand\D{\mathbb{D}}
\newcommand\Ss{\mathds{S}}
\newcommand\Tt{\mathds{T}}
\newcommand\ra{\rightarrow}
\newcommand\e{\mathrm{e}}
\newcommand\C{\mathcal{C}}
\newcommand{\ti}[1]{\widetilde{#1}}
\newcommand{\Diff}{\mathrm{Diff}}
\newcommand{\Ham}{\mathrm{Ham}}
\newcommand{\Ang}{\mathrm{Ang}}
\newcommand{\Cal}{\mathrm{Cal}}
\newcommand{\tin}{t\in[0,1]}
\newcommand{\tphi}{\widetilde{\phi}}
\newcommand{\id}{\mathrm{id}}
\newcommand{\Leb}{\mathrm{Leb}}
\newcommand{\Symp}{\mathrm{Symp}}
\newcommand{\Homeo}{\mathrm{Homeo}}
\date{February 2021}
\title{The Calabi invariant for Hamiltonian diffeomorphisms of the unit disk}
\author{Benoît Joly}
\begin{document}

\maketitle

\abstract{In this article, we study the Calabi invariant on the unit disk usually defined on compactly supported Hamiltonian diffeomorphisms of the open disk. In particular we extend the Calabi invariant to the group of $C^1$ diffeomorphisms of the closed disk which preserves the standard symplectic form. We also compute the Calabi invariant for some diffeomorphisms of the disk which satisfies some rigidity hypothesis.}
\tableofcontents

			\section{Introduction}
Let us begin with some basic definitions of symplectic geometry.\\
 		
Let us consider $(M^{2n},\omega)$ a \emph{symplectic manifold}, meaning that $M$ is an even dimensional manifold equipped with a closed non-degenerate differential 2-form $\omega$ called the \emph{symplectic form}. We suppose that $\pi_2(M)=0$ and that $\omega$ is exact, meaning that there exists a $1$-form $\lambda$, called a \emph{Liouville form}, which satisfies $d\lambda=\omega$.\\

Let us consider a time-dependent vector field $(X_t)_{t\in \R}$ defined by the equation 
\begin{equation}\label{Xt}
dH_t=\omega(X_t, .),
\end{equation}
where
\begin{align*}
H : \ &\R\times M\ra \R\\
& (t,x) \mapsto H_t(x)
\end{align*}
is a smooth function  $1$-periodic on $t$, meaning that $H_{t+1}=H_t$ for every $t\in\R$. The function $H$ is called a \textit{Hamiltonian function}. If the vector field $(X_t)_{t\in \R}$ is complete, it induces a family $(f_t)_{t\in\R}$ of diffeomorphisms of $M$ that preserve s$\omega$, also called \emph{symplectomorphisms} or \emph{symplectic diffeomorphisms}, satisfying the equation
$$\frac{\partial}{\partial t}f_t(z)=X_t(f_t(z)).$$
In particular the family $I=(f_t)_{\tin}$ defines an isotopy from $\id$ to $f_1$. The map $f_1$ is called a \textit{Hamiltonian diffeomorphism}. It is well known that the set of Hamiltonian diffeomorphisms of a symplectic manifold $M$ is a group which we denote $\Ham(M,\omega)$, we refer to \cite{McD} for more details.\\

Let us consider $(M,\omega)$ a symplectic manifold which is boundaryless, $\pi_2(M)=0$ and such that $\omega$ is exact. We say that $H$ is a \emph{compactly supported Hamiltonian function} if there exists a compact set $K\subset M$ such that $H_t$ vanishes outside $K$ for every $t\in\R$. A compactly supported Hamiltonian function induces a \emph{compactly supported Hamiltonian diffeomorphism} $f$. Such a map is equal to the identity outside a compact subset of $M$. Let us consider a compactly supported Hamiltonian diffeomorphism $f$ and $\lambda$ a Liouville form on $M$. The form $f^*\lambda-\lambda$ is closed because $f$ is symplectic but we have more, it is exact. More precisely there exists a unique compactly supported function $A_f:M\ra \R$, also called \textit{action function}, such that 
$$dA_{f}=f^*\lambda-\lambda.$$
In the literature the \textit{Calabi invariant} $\Cal(f)$ of $f$ is defined as the mean of the function $A_f$ and we have
\begin{equation}\label{CalIntro}
\Cal(f)=\int_M A_f\omega^n,
\end{equation}
where $\omega^n=\omega\wedge...\wedge \omega$ is the volume form induced by $\omega$, see \cite{McD} for more details. We will prove later that the number  $\Cal(f)$ does not depend on the choice of $\lambda$.\\

Let us give another equivalent definition of the Calabi invariant for a compactly supported Hamiltonian diffeomorphism $f$. We note $H$ a compactly supported Hamiltonian function defining $f$. The Calabi invariant of $f$ can also be defined by the equation
\begin{equation}\label{Calintro}
\Cal(f)=(n+1)\int_0^1\int_MH_t\omega^n dt.
\end{equation}
To prove that $\int_M A_f\omega^n$ does not depend on the choice of the Liouville form $\lambda$, one may use the fact that the action function $A_f$ satisfies
\begin{equation}\label{actionham}
A_f(z)=\int_0^1(\iota(X_s)\lambda+H_s)\circ f_s(z)ds,
\end{equation}
where $(X_s)_{s\in\R}$ is the time dependent vector field induced by $H$ by equation (\ref{Xt}) and $(f_s)_{s\in \R}$ is the isotopy induced by the vector field $(X_s)_{s\in\R}$. Moreover, $\int_0^1\int_MH_t\omega^n dt$ does not depend on the compactly supported Hamiltonian function $H$ defining $f$.\\

The function $\Cal$ defines a real valued morphism on the group of compactly supported Hamiltonian diffeomorphisms of $M$ and thus it is a conjugacy invariant. It is an important tool in the study of difficult problems such as the description of the algebraic structure of the groups $\Ham(M,\omega)$: A.Banyaga proved in \cite{BAN} that the kernel of the Calabi invariant is always simple, which means that it does not contain nontrivial normal subgroups. \\

In this article, we study the case of the dimension two and more precisely the case of the closed unit disk which is a surface with boundary. We denote by $||.||$ the usual Euclidian norm on $\R^2$, by $\D$ the closed unit disk and by $\Ss^1$ its boundary. The group of  $C^1$ orientation preserving diffeomorphisms of $\D$ will be denoted by $\Diff_+^1(\D)$. We consider $\Diff^1_\omega(\D)$ the group of $C^1$ symplectomorphisms of $\D$ which preserve the normalized standard symplectic form $\omega=\frac{1}{\pi}du\wedge dv,$ written in cartesian coordinates $(u,v)$. In the case of the disk, the group $\Diff^1_\omega(\D)$ is contractile, see \cite{HIR} for a proof, and coincides with the group of Hamiltonian diffeomorphisms of $\D$. Moreover, the $2$-form $\omega$ induces the Lebesgue probability measure denoted by $\Leb$ and the symplectic diffeomorphisms  are the $C^1$ diffeomorphisms of $\D$ which preserve the Lebesgue measure and the orientation.\\

Let us begin by the case of the unit open disk $\mathring{\D}$. The open disk is boundaryless hence we already have two equivalent definitions of the Calabi invariant given by equations \ref{CalIntro} and \ref{Calintro} on the set of compactly supported symplectic diffeomorphisms of $\mathring{\D}$. Let us give a third one. A. Fathi in his thesis \cite{FAT} gave a dynamical definition which is also described by J.-M. Gambaudo and \'E. Ghys in \cite{GHYS}: if we consider an isotopy $I=(f_t)_{\tin}$ from $\id$ to $f$, there exists an angle function $\Ang_I:\mathring{\D}\times\mathring{\D}\backslash \Delta\ra \R$ where $\Delta$ is the diagonal of $\mathring{\D}\times\mathring{\D}$ such that for each $(x,y)\in\mathring{\D}\times\mathring{\D}\backslash \Delta$, the quantity $2\pi\Ang_I(x,y)$ is the variation of angle of the vector $f_t(y)-f_t(x)$ between $t=0$ and $t=1$. If $f$ is a compactly supported $C^1$ symplectic diffeomorphism then this angle function is integrable (see section \ref{threeExtensions}) and it holds that
\begin{equation}\label{Calangle}
\Cal(f)=\int_{\mathring{\D}\times\mathring{\D}\backslash \Delta} \Ang_I(x,y)d\Leb(x)d\Leb(y),
\end{equation}
where the integral does not depend on the choice of the isotopy.\\

In this article we will give an answer to the following question.
\begin{question}
How to define an extension of the Calabi invariant to the group $\Diff^1_\omega(\D)$?
\end{question}

M. Hutchings \cite{HUT} extended the definition given by equation \ref{Calintro} to the $C^1$ symplectic diffeomorphisms which are equal to a rotation near the boundary. In another point of view, V. Humilière \cite{HUM3} extended the definition given by equation \ref{Calintro} to certain group of compactly supported symplectic homeomorphisms of an exact symplectic manifold $(M,\omega)$ where a compactly supported symplectic homeomorphism $f$ of $M$ is a $C^0$ limit of a sequence of Hamiltonian diffeomorphisms of $M$ supported on a common compact subset of $M$.\\

In the case of the open disk, for a compactly supported symplectomorphism $f$, the choice of the isotopy class of $f$ is natural. But if $f$ is a symplectic diffeomorphism of the closed disk such that its restriction to the open disk is not compactly supported then there is no such natural choice of an isotopy from $\id$ to $f$. \\

The rotation number is a well-known dynamical tool introduced by Poincaré in \cite{POIN} on the group $\Homeo_+(\Ss^1)$ of homeomorphisms of $\Ss^1$ which preserve the orientation. Let us consider the set of homeomorphisms $\ti{g}:\R\ra\R$ such that $\ti{g}(x+1)=\ti{g}(x)$, denoted $\ti{\Homeo}_+(\Ss^1)$. One may prove that there exists a unique $\ti{\rho}\in\R$ such that for each $z\in\R$ and $n\in\Z$ we have $|\ti g^n(z)-z-n\ti\rho|<1$. The number $\ti{\rho}=\ti\rho(\ti g)$ is called the \emph{rotation number} of $\ti g$. Let us consider $g\in\Homeo_+(\Ss^1)$ and two lifts $\ti g$ and $\ti g'$ of $g$ in $\ti\Homeo_+(\Ss^1)$, there exists $k\in\Z$ such that $\ti g=\ti g'+k$ and so $\ti\rho(\ti g)=\ti\rho(\ti g')+k$. Consequently we can define a map $\rho:\Homeo_+(\Ss^1)\ra \Tt^1$ such that $\rho(g)=\ti\rho(\ti g)+ \Z$ where $\ti g$ is a lift of $g$. The number $\rho(g)$ is called the \emph{rotation number} of $g$. We give further details about the rotation number in the next section.\\

We now state the results of this article. The following proposition allows us to consider a natural choice of an action function of a symplectomorphism of the closed disk.

\begin{prop}
Let us consider $f\in\Diff^1_\omega(\D)$, $A_f:\D\ra\R$ a $C^1$ function such that $dA_f=f^*\lambda-\lambda$ and $\mu$ an $f$ invariant Borel probability measure supported on $\Ss^1$. Then the number $\int_{\Ss^1}A_f d\mu$ does not depend on the choice of $\mu$ and $\lambda$.
\end{prop}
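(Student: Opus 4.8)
The plan is to restrict the whole identity to the boundary circle and recognise the resulting integral as a rotation number. Since $f\in\Diff^1_\omega(\D)$ is orientation preserving it maps $\Ss^1=\partial\D$ to itself, so $G:=f|_{\Ss^1}$ is an orientation preserving $C^1$ diffeomorphism of the circle, and an $f$-invariant probability measure $\mu$ supported on $\Ss^1$ is nothing but a $G$-invariant probability measure. Writing $\iota:\Ss^1\ra\D$ for the inclusion, the relation $\iota\circ G=f\circ\iota$ and the naturality of the pullback give
\[
d(A_f\circ\iota)=\iota^*(f^*\lambda-\lambda)=G^*\beta-\beta,\qquad \beta:=\iota^*\lambda .
\]
Only the pair $(G,\beta)$ survives on the boundary, and by Stokes $\int_{\Ss^1}\beta=\int_\D d\lambda=\int_\D\omega=1$.

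First I would treat the dependence on $\mu$ with $\lambda$ fixed. As $\beta$ has total mass $1$ it admits a primitive $B:\R\ra\R$ (a lift of the circle to $\R$) with $B(x+1)=B(x)+1$, and integrating the boundary identity gives $A_f\circ\iota=B\circ\ti G-B+c_0$ for a lift $\ti G$ of $G$ and a constant $c_0$. Setting $\Psi:=B\circ\ti G-B$, a function on $\Ss^1$, the telescoping identity $\sum_{k=0}^{n-1}\Psi\circ G^{k}=B\circ\ti G^{\,n}-B$, the invariance $\int_{\Ss^1}\Psi\circ G^{k}\,d\mu=\int_{\Ss^1}\Psi\,d\mu$, and the uniform convergence $\tfrac1n(\ti G^{\,n}-\id)\to\ti\rho(\ti G)$ together yield
\[
\int_{\Ss^1}A_f\,d\mu=\int_{\Ss^1}\Psi\,d\mu+c_0=\ti\rho(\ti G)+c_0 .
\]
The right-hand side involves only the rotation number of $G=f|_{\Ss^1}$ and the constant $c_0$, neither of which sees $\mu$; this gives the required independence.

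For the dependence on $\lambda$, note that the primitive $B$ — the sole trace of $\lambda$ on the boundary — has disappeared from $\ti\rho(\ti G)$, which depends only on $f|_{\Ss^1}$. Concretely, if $\lambda'$ is another Liouville form then $\lambda'-\lambda$ is closed, hence $\lambda'-\lambda=dh$ on the disk, so $f^*\lambda'-\lambda'=(f^*\lambda-\lambda)+d(h\circ f-h)$ and one may take $A_f'=A_f+(h\circ f-h)$. Since $\mu$ is $f$-invariant,
\[
\int_{\Ss^1}A_f'\,d\mu-\int_{\Ss^1}A_f\,d\mu=\int_{\Ss^1}(h\circ f-h)\,d\mu=\int_{\Ss^1}h\,d(f_*\mu)-\int_{\Ss^1}h\,d\mu=0 .
\]

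The delicate point is the additive constant $c_0$, equivalently the global constant left undetermined by $dA_f=f^*\lambda-\lambda$. It must be fixed by a normalization compatible with both moves above; the mean normalization $\int_\D A_f\,\omega=0$ does the job, because $f$ preserves $\Leb$ and therefore forces the constant in $A_f'=A_f+(h\circ f-h)+C$ to be $C=0$. Once this bookkeeping is settled, the real work is the identity $\int_{\Ss^1}\Psi\,d\mu=\ti\rho(\ti G)$: one must justify passing the limit $\tfrac1n(\ti G^{\,n}-\id)\to\ti\rho(\ti G)$ through the integral, which is exactly where the uniformity of this convergence for circle homeomorphisms enters, and one must check that $B$, which need not be monotone for a general Liouville form, still delivers the rotation number through the telescoping identity rather than through a conjugacy. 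I expect this last verification to be the main obstacle.
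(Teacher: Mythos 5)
Your proof is correct, but it takes a genuinely different route from the paper's. The paper proves $\mu$-independence by a dichotomy: if $f|_{\Ss^1}$ is uniquely ergodic there is nothing to prove, and otherwise $\rho(f|_{\Ss^1})$ is rational, so by the ergodic decomposition theorem it suffices to compare the measures $\mu_z$ carried by $q$-periodic orbits, which is done by the telescoping computation $\int_{\Ss^1} A\,d\mu_z-\int_{\Ss^1} A\,d\mu_w=\frac1q\bigl(\int_{f^q(\gamma)}\lambda-\int_\gamma\lambda\bigr)=0$, where $\gamma\subset\Ss^1$ joins two periodic points and $f^q(\gamma)$ is a reparametrization of $\gamma$. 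You instead restrict everything to the boundary and establish, for all invariant $\mu$ at once, the closed formula $\int_{\Ss^1}A_f\,d\mu=\ti\rho(\ti G)+c_0$, with no case split and no ergodic decomposition. The step you flag as the "main obstacle" is in fact harmless: since $B(x+1)=B(x)+1$, the function $P=B-\id$ is continuous and $1$-periodic, hence bounded, so $\frac1n(B\circ\ti G^{\,n}-B)=\frac1n(\ti G^{\,n}-\id)+\frac1n(P\circ\ti G^{\,n}-P)$ converges \emph{uniformly} to $\ti\rho(\ti G)$ by the standard estimate $|\ti G^{\,n}(z)-z-n\ti\rho|<1$; monotonicity of $B$ is never needed, and uniformity makes the interchange of limit and $\int d\mu$ immediate. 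What your approach buys is precisely this formula, which is a $\lambda$-general version of the identity $\int_{\Ss^1}A_1\,d\mu=C+\ti\rho(\tphi)$ that the paper only derives later (via the displacement function and the Birkhoff theorem) in the proof that $\ti\Cal_3=\Cal_1+\ti\rho$; for the round Liouville form $\lambda=\frac{r^2}{2\pi}d\theta$ one has $B=\id$ and your $\Psi$ is exactly the displacement. What the paper's argument buys is elementarity: no lifts and no primitive $B$, only integrals of $1$-forms along arcs of the circle. Finally, your bookkeeping of the additive constant is right and in fact repairs an imprecision in the statement (since $A_f$ is only determined up to a constant, $\lambda$-independence requires a normalization): your choice $\int_\D A_f\,\omega=0$ forces $C=0$ because $\int_\D(h\circ f-h)\,\omega=0$, while the paper makes the complementary choice, normalizing $\int_{\Ss^1}A\,d\mu=0$ and proving $\lambda$-independence of the disk mean in Theorem 3.1 by the same two invariance computations.
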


The first theorem follows.

\begin{theorem}\label{thm1}
For each $f\in \Diff^1_\omega(\D)$ there exists a unique function $A_f:\D\ra\R$ such that $dA_f=f^*\lambda-\lambda$ and $\int_{\Ss^1} A_fd\mu=0$ where $\lambda$ is a Liouville form and $\mu$ a $f$-invariant probability measure on $\Ss^1$. The map $\Cal_1:\Diff^1_\omega(\D) \ra \R $ defined by
$$\Cal_1(f)=\int_\D A_f(z) \omega(z)$$
does not depend on the choice of $\lambda$ and $\mu$. Moreover the map $\Cal_1$ is a homogeneous quasi-morphism that extends the Calabi invariant.
\end{theorem}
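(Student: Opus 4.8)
The plan is to construct the normalized action function, verify it is well defined, and then deduce every algebraic property of $\Cal_1$ from the restriction of $A_f$ to the boundary circle, where the whole construction is governed by the rotation number. First I would settle existence and uniqueness: since $f$ is symplectic, $d(f^*\lambda-\lambda)=f^*\omega-\omega=0$, so $f^*\lambda-\lambda$ is a closed $1$-form on the simply connected disk $\D$, hence exact, and a primitive $A_f$ exists and is unique up to an additive constant. As $f\in\Diff^1_\omega(\D)$ preserves $\D$ and its orientation, $g:=f|_{\Ss^1}$ is an orientation preserving circle diffeomorphism, which admits an invariant Borel probability measure $\mu$ by the Krylov--Bogolyubov theorem. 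By the preceding Proposition, $\int_{\Ss^1}A_f\,d\mu$ does not depend on $\mu$, so the normalization $\int_{\Ss^1}A_f\,d\mu=0$ fixes the additive constant independently of the measure, yielding the unique $A_f$ of the statement.

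Next I would check that $\Cal_1(f)=\int_\D A_f\,\omega$ is independent of $\lambda$ (independence of $\mu$ being already built into the normalization). Writing $\lambda'=\lambda+dh$ gives $f^*\lambda'-\lambda'=f^*\lambda-\lambda+d(h\circ f-h)$, so the two normalized primitives differ by $h\circ f-h$, using invariance of $\mu$ to see $\int_{\Ss^1}(h\circ f-h)\,d\mu=0$; since $f$ preserves $\omega$ (the Lebesgue measure) one has $\int_\D(h\circ f-h)\,\omega=0$, and $\Cal_1$ is unchanged.

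The heart of the argument is a boundary identity. Using $\lambda$-independence I would fix $\lambda=\frac{r^2}{2\pi}d\theta$, whose restriction to $\Ss^1\cong\R/\Z$ is $ds$ with $s=\theta/2\pi$. Restricting $dA_f=f^*\lambda-\lambda$ to the boundary and integrating yields $A_f|_{\Ss^1}(s)=\ti{g}(s)-s-\ti{\rho}(\ti{g})$, where $\ti{g}$ is a lift of $g$; here the normalization forces the constant to be $-\ti{\rho}(\ti{g})$ because $\int_{\Ss^1}(\ti{g}(s)-s)\,d\mu=\ti{\rho}(\ti{g})$ for every invariant $\mu$. Thus on the boundary $A_f$ is exactly the normalized rotation-number cocycle, which is what ties the construction to the rotation number of $f|_{\Ss^1}$.

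From this I would read off the remaining claims. The relation $A_{f\circ g}=A_f\circ g+A_g+c(f,g)$ holds with a genuine constant $c(f,g)$ on the connected disk, and since $g$ preserves $\omega$ one has $\int_\D A_f\circ g\,\omega=\int_\D A_f\,\omega$, so $\Cal_1(fg)-\Cal_1(f)-\Cal_1(g)=c(f,g)$. Evaluating $c(f,g)$ via the boundary identity collapses it to $\ti{\rho}(\ti{g}_f)+\ti{\rho}(\ti{g}_g)-\ti{\rho}(\ti{g}_f\circ\ti{g}_g)$, the defect of the rotation number on $\ti{\Homeo}_+(\Ss^1)$, which is bounded by $1$ in absolute value; this is precisely the quasi-morphism estimate. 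Taking $g=f^k$ and telescoping, together with $\ti{\rho}(\ti{g}^n)=n\,\ti{\rho}(\ti{g})$, gives $c_n=0$ in $A_{f^n}=\sum_{k=0}^{n-1}A_f\circ f^k+c_n$, hence $\Cal_1(f^n)=n\,\Cal_1(f)$ by measure-preservation, so $\Cal_1$ is homogeneous. For the extension claim, a compactly supported $f$ equals the identity near $\Ss^1$, so its compactly supported action function vanishes there and is already normalized; by uniqueness it coincides with our $A_f$, whence $\Cal_1(f)=\Cal(f)$. The main obstacle is the boundary identity and its consequences: one must verify carefully that the boundary restriction is the normalized displacement cocycle and that the cocycle constants are the rotation-number defects, since this is exactly what upgrades the soft well-definedness into the sharp quasi-morphism bound and the exact homogeneity.
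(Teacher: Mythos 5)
Your proposal is correct, and its well-definedness part coincides with the paper's own proof of Theorem \ref{thm1.0}: exactness of $f^*\lambda-\lambda$ on the simply connected disk, existence of an invariant measure on $\Ss^1$, $\mu$-independence via the preceding proposition, and the computation $A'=A+u\circ f-u+c$ with $c=0$ forced by the normalization and $\int_\D(u\circ f-u)\,\omega=0$. Where you genuinely diverge is in the quasi-morphism and homogeneity claims. The paper does not prove these inside the action-function framework at all: it first builds the angle-function morphism $\ti\Cal_2$ and the Hamiltonian morphism $\ti\Cal_3$, proves $\ti\Cal_2=\ti\Cal_3$ by a Cauchy-formula computation, then establishes $\ti\Cal_2(\ti f)=\Cal_1(f)+\ti{\rho}(\tphi)$ (Theorem \ref{equality}), and finally observes that $\Cal_1$, being a morphism minus the homogeneous quasi-morphism $\ti\rho$, is a homogeneous quasi-morphism. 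You instead stay entirely with $A_f$: your boundary identity $A_f|_{\Ss^1}=\delta_f-\ti\rho(\ti g_f)$ is correct (restricting $dA_f$ to $\Ss^1$ with $\lambda=\frac{r^2}{2\pi}d\theta$ gives $d(A_f|_{\Ss^1})=g^*ds-ds$, and the fact that $\int_{\Ss^1}\delta_f\,d\mu=\ti\rho(\ti g_f)$ for every invariant $\mu$ fixes the constant), and combined with the primitive cocycle $A_{f\circ g}=A_f\circ g+A_g+c(f,g)$ and $\int_\D\omega=1$ it yields $\Cal_1(f\circ g)-\Cal_1(f)-\Cal_1(g)=\ti\rho(\ti g_f)+\ti\rho(\ti g_g)-\ti\rho(\ti g_f\circ\ti g_g)$, exactly the rotation-number defect, bounded by $1$. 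This is in fact a distilled version of what is buried in the paper: in the proof that $\ti\Cal_3(\ti f)=\Cal_1(f)+\ti\rho(\tphi)$ the same boundary computation appears ($A_1=\delta+C$ on $\Ss^1$ with $C=-\ti\rho(\tphi)$), but derived through a Hamiltonian isotopy. Your route is more elementary — no isotopies, no angle functions, no Fubini or Cauchy-formula estimates — and it makes the defect of $\Cal_1$ explicit; the paper's route costs that machinery but simultaneously delivers Theorem \ref{thmlink} (equality of the three extensions, plus continuity), of which the quasi-morphism property is a one-line corollary. Two points to tighten in yours: homogeneity is required for all $n\in\Z$, so you should also record $c(f,f^{-1})=0$ (again from the boundary identity and $\ti\rho(\ti g^{-1})=-\ti\rho(\ti g)$), giving $\Cal_1(f^{-1})=-\Cal_1(f)$; and the defect bound rests on the standard fact that $\ti\rho$ is a quasi-morphism on $\ti{\Homeo}_+(\Ss^1)$ with defect at most $1$, which the paper cites from \cite{GHYS2} and which you should invoke explicitly.
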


In another direction, the definition given by equation \ref{Calintro} and the definition given by equation \ref{Calangle} are based on isotopies. Then we consider the universal cover $\ti{\Diff}^1_\omega(\D)$ of $\Diff^1_\omega(\D)$ which is composed of couples $\ti f=(f,[I])$ where $f\in\Diff^1_\omega(\D)$ and $[I]$ is an homotopy class of isotopies from $\id$ to $f$. We will prove that for $f\in\Diff^1_\omega(\D)$ and $I$ an isotopy from $\id$ to $f$, the angle function $\Ang_I$ does not depend on the choice of $I\in[I]$. Hence, for $\ti f=(f,[I])\in\ti{\Diff}^1_\omega(\D)$ we can denote $\Ang_{\ti f}=\Ang_I$ for $I\in [I]$.\\ 
Moreover, for a diffeomorphism $f\in\Diff^1(\D)$ two isotopies $I=(f_t)_{\tin}$ and $I'=(f'_t)_{\tin}$ from $\id$ to $f$ are homotopic if and only if there restriction $I|_{\Ss^1}$ and $I'|_{\Ss^1}$ to $\Ss^1$ are homotopics and so define the same lift $\ti {f|_{\Ss^1}}$ of $f|_{\Ss^1}$ on the universal cover over $\Ss^1$. Hence it is equivalent to consider $\ti{\Diff}^1_\omega(\D)$ as the set of couples $\ti f=(f,\ti\phi)$ where $f\in \Diff^1_\omega(\D)$ and $\tphi$ a lift of $f|_{\Ss^1}$ to the universal cover of $\Ss^1$.


\begin{theorem}\label{thm2}
Let us consider an element $\ti f$ of $\ti{\Diff}^1_\omega(\D)$. The number 
$$\ti{\Cal}_2(\ti f)=\int_{\D^2\backslash\Delta} \Ang_{\ti f}(x,y)\omega(x)\omega(y),$$
defines a morphism $\ti{\Cal}_2:\ti{\Diff}^1_\omega(\D)\ra\R$ which induces a morphism $\Cal_2:\Diff^1_\omega(\D)\ra \Tt^1$ defined for every $f\in\Diff^1_\omega(\D)$ by
$$\Cal_2(f)=\ti{\Cal}_2(\ti f)+\Z,$$
where $\ti f$ is a lift of $f$ in $\ti{\Diff}^1_\omega(\D)$.
\end{theorem}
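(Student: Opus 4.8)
The plan is to verify three things in turn: that the integral converges so that $\ti{\Cal}_2(\ti f)$ is a genuine real number, that $\ti{\Cal}_2$ is additive on $\ti{\Diff}^1_\omega(\D)$, and that its values on the kernel of the covering projection $\ti{\Diff}^1_\omega(\D)\to\Diff^1_\omega(\D)$ lie in $\Z$, which is precisely what is needed to descend to $\Tt^1$. The well-definedness of $\Ang_{\ti f}$ is already granted, since the angle function depends only on the homotopy class $[I]$. For integrability I would use that $f$ extends to a $C^1$ diffeomorphism of the closed disk: along any isotopy the vector $f_t(y)-f_t(x)$ never vanishes off $\Delta$, while near $\Delta$ its direction is controlled by the linearizations $Df_t$; combined with the compactness of $\D$ this bounds $\Ang_{\ti f}$ and makes it integrable against the finite product measure $\omega(x)\omega(y)$. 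I treat this finiteness as established in the analysis of Section \ref{threeExtensions}.

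The core of the argument is a cocycle identity for the angle function. Given $\ti f=(f,[I])$ and $\ti g=(g,[J])$ with $I=(f_t)_{\tin}$ and $J=(g_t)_{\tin}$, the product $\ti f\ti g$ is represented by the pointwise product isotopy $(f_t\circ g_t)_{\tin}$. Writing the square homotopy $(s,t)\mapsto f_s\circ g_t$ and traversing its left-then-top edges, this isotopy is homotopic rel endpoints to the concatenation of $J$ with $(f_t\circ g)_{\tin}$. Since $\Ang$ is invariant under homotopy and angle increments add along a concatenation, the variation of angle of $f_t(g_t(y))-f_t(g_t(x))$ splits as the variation produced by $J$ on the pair $(x,y)$, followed by the variation produced by $I$ on the pair $(g(x),g(y))$. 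This gives
$$\Ang_{\ti f\ti g}(x,y)=\Ang_{\ti g}(x,y)+\Ang_{\ti f}(g(x),g(y)).$$

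Integrating this identity over $\D^2\backslash\Delta$ then yields the morphism property. The first term integrates to $\ti{\Cal}_2(\ti g)$. In the second term I change variables by $(u,v)=(g(x),g(y))$; because $g$ preserves $\omega$ (being a symplectomorphism, it is Lebesgue preserving) the map $g\times g$ preserves the product measure $\omega(x)\omega(y)$ and sends $\D^2\backslash\Delta$ onto itself, so this term integrates to $\ti{\Cal}_2(\ti f)$. Hence $\ti{\Cal}_2(\ti f\ti g)=\ti{\Cal}_2(\ti f)+\ti{\Cal}_2(\ti g)$, and $\ti{\Cal}_2$ is a morphism to $\R$.

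Finally, to obtain $\Cal_2$ I compute $\ti{\Cal}_2$ on the generator of $\ker\big(\ti{\Diff}^1_\omega(\D)\to\Diff^1_\omega(\D)\big)$. Under the description of $\ti{\Diff}^1_\omega(\D)$ as pairs $(f,\tphi)$, this kernel is the copy of $\Z$ generated by $\ti T=(\id,x\mapsto x+1)$, the class of the full boundary rotation, realized by the isotopy $R_t(z)=\e^{2\pi i t}z$. Along this isotopy $R_t(y)-R_t(x)=\e^{2\pi i t}(y-x)$ turns by exactly one full revolution, so $\Ang_{\ti T}\equiv 1$ and, since $\omega$ is a probability measure and $\Delta$ is negligible,
$$\ti{\Cal}_2(\ti T)=\int_{\D^2\backslash\Delta}1\,\omega(x)\omega(y)=1.$$
As $\ti{\Cal}_2$ is a morphism, two lifts of the same $f$ differ by $k\,\ti{\Cal}_2(\ti T)=k\in\Z$, so $\Cal_2(f)=\ti{\Cal}_2(\ti f)+\Z$ is independent of the lift and defines a morphism $\Diff^1_\omega(\D)\to\Tt^1$. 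The main obstacle is the cocycle identity itself: one must justify the passage to the concatenated representative and control the angle function near $\Delta$; once that is secured, the change of variables and the computation on the generator are routine.
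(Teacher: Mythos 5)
Your proposal is correct and follows essentially the same route as the paper: well-definedness from the boundedness/integrability of the angle function established in section \ref{threeExtensions}, the cocycle identity $\Ang_{\ti f\circ\ti g}(x,y)=\Ang_{\ti g}(x,y)+\Ang_{\ti f}(g(x),g(y))$ for the (concatenated) product isotopy, and integration with the change of variables using that $g$ preserves $\omega$. The only cosmetic difference is the final descent to $\Tt^1$: the paper invokes Lemma \ref{choiceI} (two isotopy classes differ by a power $R^k$ of the full rotation, shifting $\Ang$ pointwise by the integer $k$), whereas you evaluate the morphism on the generator $(\id,x\mapsto x+1)$ of the deck group and use additivity — an equivalent computation.
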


Along the same lines, we have the following result.\\

\begin{theorem}\label{thm3}
Let us consider an element $(f,\tphi)$ of $\ti{\Diff}^1_\omega(\D)$. There exists a Hamiltonian function $(H_t)_{\tin}$ such that $H_t$ is equal to $0$ on $\Ss^1$ for every $t\in\R$ which induces an isotopy $(\phi_t)_{\tin}$ from $\id$ to $f$ where the lifted isotopy $(\ti\phi_t)_{\tin}$ satisfies $\ti\phi_1=\ti\phi$ . The number
$$\ti\Cal_3(f,\tphi)=\int_0^1\int_\D H_t(z)\omega(z)dt,$$
does not depend on the choice of the Hamiltonian function $H$. Moreover the map $\ti\Cal_3:\ti\Diff^1_{\omega(\D)}\ra\R$ is a morphism and induces a morphism $\Cal_3: \Diff^1_{\omega(\D)} \ra\Tt^1$ defined by
$$\Cal_3(f)=\ti{\Cal}_3(f,\tphi)+\Z.$$
\end{theorem}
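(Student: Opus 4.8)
The plan is to establish the four assertions in turn: the existence of a boundary-vanishing Hamiltonian, the independence of $\ti\Cal_3(f,\tphi)$ of the chosen Hamiltonian, the morphism property, and the descent to $\Tt^1$. First I would produce the Hamiltonian. Since $\Diff^1_\omega(\D)$ is path-connected, the class $\tphi$ is represented by an isotopy $(\phi_t)_\tin$ from $\id$ to $f$, and as $H^1(\D)=0$ this symplectic isotopy is generated by a time-dependent Hamiltonian $(H_t)_\tin$ via $dH_t=\iota_{X_t}\omega$. The key observation is that $X_t$ is tangent to $\Ss^1$, because the flow preserves $\D$ and hence $\Ss^1$; therefore for $v\in T\Ss^1$ the vectors $X_t$ and $v$ are collinear, so $\omega(X_t,v)=0$, i.e. $dH_t$ vanishes on $T\Ss^1$ and $H_t$ is constant along the connected circle $\Ss^1$. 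Subtracting this (time-dependent) constant changes neither $X_t$ nor the isotopy, so we may assume $H_t|_{\Ss^1}=0$, while $\ti\phi_1=\tphi$ holds because we started in the class of $\tphi$.

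The heart of the proof, and the step I expect to require the most care, is the independence of $\ti\Cal_3(f,\tphi)$ of $H$. Given two admissible Hamiltonians, their isotopies induce the same $(f,\tphi)$ and are therefore homotopic rel endpoints, since two isotopies are homotopic iff their boundary restrictions are, as recalled just before the statement. I would join them by a smooth two-parameter family $(\phi_{s,t})_{s,t\in[0,1]}$ with $\phi_{s,0}=\id$ and $\phi_{s,1}=f$, each slice $\phi_{s,\cdot}$ generated by a boundary-vanishing Hamiltonian $H^s_t$ (vector field $X^s_t$), and I would let $G^s_t$ be the boundary-vanishing Hamiltonian of the transverse field $Y^s_t=\partial_s\phi_{s,t}\circ\phi_{s,t}^{-1}$. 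Equality of the mixed derivatives of $\phi_{s,t}$ gives $\partial_sX^s_t-\partial_tY^s_t=[X^s_t,Y^s_t]$, which after contraction with $\omega$ reads $d(\partial_sH^s_t-\partial_tG^s_t-\{H^s_t,G^s_t\})=0$; the resulting spatial constant vanishes because all three terms vanish on $\Ss^1$ (here one uses that $X^s_t,Y^s_t$ are tangent to $\Ss^1$, so $\{H^s_t,G^s_t\}|_{\Ss^1}=0$). Differentiating under the integral then yields
$$\frac{d}{ds}\int_0^1\int_\D H^s_t\,\omega\,dt=\int_\D(G^s_1-G^s_0)\,\omega+\int_0^1\int_\D\{H^s_t,G^s_t\}\,\omega\,dt.$$
The first term vanishes since $\partial_s\phi_{s,0}=\partial_s\phi_{s,1}=0$ forces $Y^s_0=Y^s_1=0$, so $G^s_0,G^s_1$ are spatial constants, hence $0$ by the boundary normalization; the second vanishes by Stokes, because $\{H^s_t,G^s_t\}\,\omega=d(G^s_t\,\iota_{X^s_t}\omega)$ and $G^s_t|_{\Ss^1}=0$ kills the boundary integral. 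Thus the integral is independent of $s$, which proves the independence.

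Finally I would check the morphism property and the descent. For $(f,\tphi)$ and $(g,\tpsi)$ with admissible Hamiltonians $H,K$ and isotopies $(\phi_t),(\chi_t)$, the product isotopy $(\phi_t\circ\chi_t)$ represents $(f,\tphi)\cdot(g,\tpsi)$ and is generated by $H_t+K_t\circ\phi_t^{-1}$, which still vanishes on $\Ss^1$ because $\phi_t^{-1}$ preserves $\Ss^1$; since $\phi_t$ preserves $\omega$, the change of variables $z\mapsto\phi_t(z)$ gives $\int_\D K_t\circ\phi_t^{-1}\,\omega=\int_\D K_t\,\omega$, so $\ti\Cal_3$ is additive and, by the independence just proved, a well-defined morphism. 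For the descent, replacing the lift $\tphi$ by $\tphi+k$ with $k\in\Z$ amounts to composing with $k$ copies of the rotation loop $R_{2\pi t}$, whose boundary lift is $x\mapsto x+1$; by the morphism property this shifts $\ti\Cal_3$ by $k$ times its value on that loop, which a direct computation with the admissible Hamiltonian $H_t=1-\|z\|^2$ evaluates to a fixed constant. This constant generates the period lattice, so the reduction $\Cal_3(f)=\ti\Cal_3(f,\tphi)+\Z$ is a well-defined morphism with values in the circle $\Tt^1$.
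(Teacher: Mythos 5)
Your route is genuinely different from the paper's. The paper never proves Theorem \ref{thm3} directly: it first establishes that the angle integral $\ti\Cal_2$ depends only on the homotopy class of the isotopy (Lemma \ref{choiceI}, a purely topological fact) and is a morphism (Theorem \ref{Cal2}), and then, by a Cauchy-formula computation in complex coordinates following Shelukhin, proves $\int_{\D^2\backslash\Delta}\Ang_I\,\omega\omega=2\int_0^1\int_\D H_t\,\omega\,dt$, from which independence of $H$, the morphism property, and the descent for $\ti\Cal_3$ are all inherited from $\ti\Cal_2$. You instead give a direct variational proof. Your steps on existence (tangency of $X_t$ to $\Ss^1$, subtracting the boundary constant — this is exactly the paper's normalization in Section 3.3), on independence (the two-parameter family with $\partial_s H-\partial_t G-\{H,G\}$ constant, killed on $\Ss^1$, and $\int_\D\{H_t,G_t\}\,\omega=\pm\int_{\Ss^1}G_t\,dH_t=0$), and on additivity ($H_t+K_t\circ\phi_t^{-1}$) are the standard Calabi well-definedness argument correctly adapted to the boundary-vanishing normalization. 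One caveat: the bracket identity $\partial_sX-\partial_tY=[X,Y]$ requires differentiating the vector fields, which is not available for genuinely $C^1$ data (the fields are only $C^0$, and the smooth two-parameter family must be constructed); the paper's detour through $\ti\Cal_2$ avoids exactly this, since homotopy invariance of the angle integral needs no $s$-derivatives.

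The genuine gap is in your descent step, and it is precisely the computation you declined to carry out. You correctly reduce to evaluating $\ti\Cal_3$ on the full rotation loop $(R_{2\pi t})_{\tin}$ with admissible Hamiltonian $H_t(z)=1-\|z\|^2$, and then assert that this value ``generates the period lattice.'' It does not, under the paper's conventions: with $\omega=\frac{1}{\pi}du\wedge dv=\frac{r}{\pi}dr\wedge d\theta$ one gets
$$\int_0^1\int_\D (1-r^2)\,\omega\,dt=2\int_0^1(r-r^3)\,dr=\frac{1}{2},$$
while changing the lift $\tphi$ to $\tphi+k$ must shift the invariant by an integer for $\Cal_3(f)=\ti\Cal_3(f,\tphi)+\Z$ to be well defined. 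So with the literal formula $\ti\Cal_3=\int_0^1\int_\D H_t\,\omega\,dt$ the reduction modulo $\Z$ fails (it is only well defined modulo $\frac{1}{2}\Z$). This in fact exposes a factor-of-two inconsistency in the paper itself: its own Section 4 identity reads $\ti\Cal_2=2\int_0^1\int_\D H_t\,\omega\,dt$, and Lemma \ref{hamchoice} carries the same factor $2$, so the quantity that equals $\ti\Cal_2$, takes the value $1$ on the rotation loop, and descends to $\Tt^1$ is $2\int_0^1\int_\D H_t\,\omega\,dt$, not the integral as written in the statement. Your proof strategy for the descent is the right one, but as written the final step rests on an unverified constant whose actual value contradicts the claim; the fix is to insert the factor $2$ (equivalently, to prove the statement with the normalization forced by $\ti\Cal_2=\ti\Cal_3$), after which your argument goes through.
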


\begin{rem}
We have the following commutative diagram
$$\xymatrix{
    \ti{\Diff}^1_\omega(\D) \ar[r]^{\ti{\pi}} \ar[d]_{\ti{\Cal}_i}  & \Diff^1_\omega(\D) \ar[d]^{\Cal_i}  \\
    \R \ar[r]^{\pi}  & \Tt^1 \\
  }$$ 
where $i\in\{2,3\}$.
\end{rem}
The link between these three extensions is given by the following result:

\begin{theorem}\label{thmlink}
The morphisms $\ti{\Cal}_2$ and $\ti{\Cal}_3$ are equal and for $\ti f=(f,\ti{\phi})\in \ti{\Diff}^1_\omega(\D)$ we have the following equality
$$\ti{\Cal}_2(\ti f)=\Cal_1(f)+\ti{\rho}(\ti{\phi}).$$
Moreover the maps $\Cal_1$, $\ti\Cal_2$, $\Cal_2$, $\ti\Cal_3$ and $\Cal_3$ are continuous in the $C^1$ topology. 
\end{theorem}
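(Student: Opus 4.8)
The plan is to funnel all three assertions through a single auxiliary quantity. Fix, as provided by Theorem \ref{thm3}, a Hamiltonian isotopy $I=(\phi_t)_{\tin}$ from $\id$ to $f$ whose Hamiltonian $(H_t)_{\tin}$ vanishes on $\Ss^1$ and whose boundary lift is $\tphi$, and let $A^{I}_f$ be the action function attached to $I$ by formula (\ref{actionham}),
$$A^{I}_f(z)=\int_0^1\bigl(\iota(X_s)\lambda+H_s\bigr)\circ\phi_s(z)\,ds .$$
I will establish the three identities
$$\int_\D A^{I}_f\,\omega=\ti\Cal_3(f,\tphi),\qquad \int_\D A^{I}_f\,\omega=\ti\Cal_2(\ti f),\qquad \int_\D A^{I}_f\,\omega=\Cal_1(f)+\ti\rho(\tphi),$$
from which the equality $\ti\Cal_2=\ti\Cal_3$ (from the first two) and the relation $\ti\Cal_2=\Cal_1+\ti\rho$ (from the last two) follow at once.

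First I would prove the Hamiltonian identity. Since each $\phi_s$ preserves $\omega$, the substitution $w=\phi_s(z)$ removes the $\circ\,\phi_s$ and gives $\int_\D A^{I}_f\,\omega=\int_0^1\int_\D(\iota(X_s)\lambda+H_s)\,\omega\,ds$. In dimension two one has the pointwise identity $(\iota(X_s)\lambda)\,\omega=\lambda\wedge\iota(X_s)\omega=\lambda\wedge dH_s$, and Stokes applied to $d(H_s\lambda)=dH_s\wedge\lambda+H_s\,\omega$ turns $\int_\D(\iota(X_s)\lambda)\,\omega$ into $\int_\D H_s\,\omega-\int_{\Ss^1}H_s\lambda$. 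The boundary term vanishes because $H_s\equiv 0$ on $\Ss^1$, so $\int_\D A^{I}_f\,\omega$ reduces, exactly as in the passage from (\ref{CalIntro}) to (\ref{Calintro}), to the Hamiltonian integral $\ti\Cal_3(f,\tphi)$; the only new feature relative to the compactly supported case is precisely that the boundary integral is killed by the normalization $H_s|_{\Ss^1}=0$.

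The angle identity $\int_\D A^{I}_f\,\omega=\ti\Cal_2(\ti f)$ is the extension of the Fathi--Gambaudo--Ghys formula (\ref{Calangle}) to $\Diff^1_\omega(\D)$. I would differentiate $t\mapsto$ (angle of $\phi_t(y)-\phi_t(x)$), whose integral over $t\in[0,1]$ is $2\pi\Ang_{\ti f}(x,y)$, integrate in $(x,y)$, and after the measure-preserving substitution $(a,b)=(\phi_t(x),\phi_t(y))$ and Fubini recognise the resulting double integral of the angular speed $\tfrac{(b-a)\times(X_t(b)-X_t(a))}{|b-a|^2}$ as $\int_\D A^{I}_f\,\omega$. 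I expect this to be the main obstacle: the integrand is singular along the diagonal $\Delta$, and the points $a,b$ may reach $\Ss^1$, where $X_t$ is merely tangent rather than vanishing, so the interchange of integrals and the control near $\Delta$ must be justified through the integrability statement of section \ref{threeExtensions}.

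It remains to split $\int_\D A^{I}_f\,\omega$. On $\Ss^1$ one has $H_s=0$ and $\lambda=d\theta$, so $\iota(X_s)\lambda\circ\phi_s=\frac{d}{ds}\ti\theta(\phi_s(z))$ and hence $A^{I}_f(z)=\tphi(\theta(z))-\theta(z)$ for $z\in\Ss^1$. The integral formula for the rotation number then gives $\int_{\Ss^1}A^{I}_f\,d\mu=\int_{\Ss^1}(\tphi-\id)\,d\mu=\ti\rho(\tphi)$ for every $f$-invariant measure $\mu$ on $\Ss^1$, so $A_f:=A^{I}_f-\ti\rho(\tphi)$ is precisely the normalized action function of Theorem \ref{thm1}; since $\int_\D\omega=1$ this yields $\Cal_1(f)=\int_\D A^{I}_f\,\omega-\ti\rho(\tphi)$, the third identity. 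For continuity I would note that $\ti\rho$ is continuous and that the primitive determining $A_f$ depends continuously on $f$ in the $C^1$ topology, being built from $f^*\lambda-\lambda$, hence from $Df$; thus $\Cal_1$ is continuous because both the $\D$-integral of the primitive and the normalizing constant $\ti\rho(\tphi)$ vary continuously, and the two identities transfer continuity to $\ti\Cal_2$ and $\ti\Cal_3$, while $\Cal_2$ and $\Cal_3$ are continuous as composites with the continuous projection $\pi:\R\ra\Tt^1$.
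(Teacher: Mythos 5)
Your architecture is sound, and two of your three identities do correspond to computations the paper actually performs: your step (a) is exactly the paper's wedge identity $0=i_{X_t}(\lambda\wedge\omega)$, giving $i_{X_t}(\lambda)\,\omega=H_t\,\omega-d(H_t\lambda)$, with Stokes and $H_t|_{\Ss^1}=0$ killing the boundary term; your step (c) is the paper's boundary analysis, where $A^I_f|_{\Ss^1}$ equals the displacement function $\delta$ of $\tphi$ and Birkhoff's theorem gives $\int_{\Ss^1}\delta\,d\mu=\ti\rho(\tphi)$, forcing the normalizing constant to be $-\ti\rho(\tphi)$. (One normalization wrinkle: your own computation in (a) yields $\int_\D A^I_f\,\omega=2\int_0^1\int_\D H_s\,\omega\,ds$, consistent with equation (\ref{Calintro}) for $n=1$ and with Lemma \ref{hamchoice}, so the first identity holds with that factor of $2$; keep this consistent throughout or the chain of equalities breaks.) The genuine gap is step (b). Saying you ``recognise'' the double integral of the angular speed as $\int_\D A^I_f\,\omega$ restates the hardest part of the theorem without proving it: for fixed $b$ the inner integral $\int_a\frac{(b-a)\times(X_t(b)-X_t(a))}{|b-a|^2}\,\omega(a)$ does not reduce pointwise to anything resembling the action density, and the collapse of the double integral to a single integral over $\D$ is a nontrivial kernel computation. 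The paper carries it out in complex coordinates: writing $\xi_t=dz(X_t)=-2i\pi\frac{\partial H_t}{\partial\overline z}$, justifying Fubini by the integrability bound $\int_{\D\backslash\{z_1\}}\frac{\omega(z_2)}{|z_1-z_2|}\leq 4\pi$ (Lemma \ref{Fubini}), and then applying the Cauchy formula for smooth functions
$$g(w)=\frac{1}{2i\pi}\int_{\Ss^1}\frac{g(z)}{z-w}\,dz+\frac{1}{2i\pi}\int_\D\frac{\partial g}{\partial\overline z}\,\frac{dz\wedge d\overline z}{z-w}$$
to $g=H_t$. Note also that you misplace where the boundary hypothesis enters in (b): the tangency of $X_t$ to $\Ss^1$ is not the obstacle; the hypothesis $H_t|_{\Ss^1}=0$ is used precisely to annihilate the circle term in this Cauchy formula. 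Without this (or an equivalent Green-type argument) your second identity, and hence both $\ti\Cal_2=\ti\Cal_3$ and $\ti\Cal_2=\Cal_1+\ti\rho$, are unproven.

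Your continuity argument also has a soft spot. The normalized action $A_f$ is pinned by $\int_{\Ss^1}A_f\,d\mu=0$ with $\mu$ an $f$-invariant measure on $\Ss^1$, and invariant measures do not vary continuously with $f$; so ``the primitive depends continuously on $Df$'' does not by itself give continuity of $\Cal_1$. You can repair this within your framework, either by pinning the additive constant through the boundary values $A^I_f|_{\Ss^1}=\delta_{\tphi}$ (continuous in $\tphi$) and invoking continuity of $\ti\rho$, or by weak-$*$ compactness combined with the $\mu$-independence of Proposition \ref{mu}. The paper instead proceeds in the opposite direction: it proves continuity of the morphism $\ti\Cal$ at the identity via the quantitative estimate $|\cos(2\pi\Ang_{\ti f}(x,y))-1|\leq 2\epsilon$ when $\ti d_1(\ti f,\ti\id)\leq\epsilon$, upgraded by an $\arccos$ estimate and connectedness of $\D^2\backslash\Delta$ to a uniform bound $\|\Ang_{\ti f}\|_\infty=O(\sqrt\epsilon)$, uses the group-morphism property to get continuity everywhere, and only then deduces continuity of $\Cal_1=\ti\Cal_2-\ti\rho$. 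Either route works, but as written your paragraph does not yet close the argument.
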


In the following, $\ti\Cal_2$ and $\ti\Cal_3$ will be denoted $\ti\Cal$. Since the morphism $\ti{\Cal}$ and the quasi-morphism $\Cal_1$ are not trivial we obtain the following corollary about the perfectness of the groups $\ti{\Diff}^1_\omega(\D)$ and $\Diff^1_\omega(\D)$. Recall that a group $G$ is said to be perfect if it is equal to its commutator subgroup $[G,G]$ which is generated by the commutators $[f,g]=f^{-1}g^{-1}fg$ where $f$ and $g$ are elements of $G$

\begin{coro}
The groups $\ti{\Diff}^1_\omega(\D)$ and $\Diff^1_\omega(\D)$ are not perfect.
\end{coro}

The non simplicity of those groups were already known since the group of compactly supported Hamiltonian diffeomorphisms is a non trivial normal subgroup of $\Diff^1_\omega(\D)$. The questions of the simplicity and the perfectness of groups of diffeomorphisms and Hamiltonian diffeomorphism have a long story, especially the case of the group of area-preserving and compactly supported homeomorphisms of the disk $\D$. The question appears on McDuff and Salamon's list of open problems in \cite{McD} and we can refer for example to \cite{BAN,Bou08,EPP12,Fat80,LR10a,LR10b,Oh10,OM07}. Recently D. Cristofaro-Gardiner, V. Humilière, S. Seyfaddini in \cite{HUM2} proved that the connected component of $\id$ in the group of area-preserving homeomorphisms of the unit disk $\D$ is not simple. The proof requires the study of the Calabi invariant on the group of compactly supported Hamiltonian of $\D$ but also strong arguments of symplectic geometry as Embedded Contact Homology (also called ECH) developed by M. Hutchings and D. Cristofaro-Gardiner in \cite{HUM2}.\\ 

To give an illustration of the extension we compute the Calabi invariant $\Cal_1$ of non trivial symplectomorphisms in sections \ref{computation} and \ref{examples}. We study the Calabi invariant $\Cal_1$ of some \emph{irrational pseudo rotations}. An irrational pseudo-rotation of the disk is an area-preserving homeomorphism $f$ of $\D$ that fixes $0$ and that does not possess any other periodic point. To such a homeomorphism is associated an irrational number $\overline{\alpha}\notin \Q/\Z$, called the \textit{rotation number} of $f$ that measures the rotation number of every orbit around $0$ and consequently is equal to the rotation number of the restriction of $f$ on $\Ss^1$. We refer to the next section for more details.

The following results of this paper are well-inspired by M. Hutchings's recent work. M. Hutching proved as a corollary in \cite{HUT} that the Calabi invariant $\Cal_3$ of every $C^\infty$ irrational pseudo rotation $f$ of the closed unit disk $\D$ such that $f$ is equal to a rotation near the boundary is equal to the rotation number of $f$. This means that for an irrational pseudo rotation $f$ which is equal to a rotation near the boundary, $\Cal_1(f)$ is equal to $0$. The proof uses strong arguments of symplectic geometry such as the notion of open-books introduced by Giroux (see \cite{GIR} for example) and the Embedded Contact Homology theory. We want to adopt a more dynamical point of view and we partially answer the following question.

\begin{question}\label{question}
Is the Calabi invariant $\Cal_1(f)$ equal to $0$ for every $C^1$ irrational pseudo rotation $f$ of $\D$?
\end{question}

With the continuity of $\ti\Cal$ in the $C^1$ topology, we can deduce the first result of $C^1$-rigidity as the following result.

\begin{theorem}
Let  $f$ be a $C^1$ irrational pseudo rotation of $\D$. If there exists a sequence $(g_n)_{n\in\N}$ in $\Diff_\omega^1(\D)$ of $C^1$ diffeomorphisms of finite order which converges to $f$ for the $C^1$ topology, then 
$$\Cal_1(f)=0.$$
\end{theorem}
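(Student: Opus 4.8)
The plan is to deduce the result from two properties of $\Cal_1$ that were already established: it is a homogeneous quasi-morphism on $\Diff^1_\omega(\D)$ (Theorem \ref{thm1}) and it is continuous for the $C^1$ topology (Theorem \ref{thmlink}). I emphasize that the irrational pseudo-rotation hypothesis on $f$ plays no role in the argument beyond situating the statement within Question \ref{question}; the whole proof rests on the $C^1$-approximation of $f$ by finite-order maps together with the two structural facts above.

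First I would isolate the elementary observation that any homogeneous quasi-morphism vanishes on finite-order elements, and in fact only homogeneity is needed. Let $\phi$ be homogeneous, i.e. $\phi(g^n)=n\,\phi(g)$ for all $n\in\Z$, and suppose $g^k=\id$ for some $k\geq 1$. Taking $n=2$ on $g=\id$ gives $\phi(\id)=2\phi(\id)$, so $\phi(\id)=0$; then $0=\phi(\id)=\phi(g^k)=k\,\phi(g)$ forces $\phi(g)=0$. Applying this with $\phi=\Cal_1$ to each $g_n$, which has finite order in $\Diff^1_\omega(\D)$, yields $\Cal_1(g_n)=0$ for every $n\in\N$.

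It then remains only to pass to the limit. By hypothesis $g_n\to f$ in the $C^1$ topology, and $\Cal_1$ is continuous for this topology by Theorem \ref{thmlink}, so
$$\Cal_1(f)=\lim_{n\to\infty}\Cal_1(g_n)=0,$$
as claimed.

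The step I expect to carry all the weight is the $C^1$-continuity of $\Cal_1$, but this is precisely what Theorem \ref{thmlink} provides (through the identity $\ti\Cal_2(\ti f)=\Cal_1(f)+\ti\rho(\ti\phi)$ and the continuity of the rotation number and of $\ti\Cal$); granting it, the present theorem is a short formal consequence. One could alternatively phrase the torsion-killing step in the universal cover $\ti\Diff^1_\omega(\D)$ using the genuine morphism $\ti\Cal$, but this is less direct: a lift $\ti g_n$ of a finite-order $g_n$ need not itself be torsion, since $\ti g_n^{\,k}$ is in general a nontrivial deck transformation, so one would have to track the boundary rotation number $\ti\rho(\ti\phi_n)$ explicitly and cancel it against $\ti\Cal(\ti g_n)$ via the relation of Theorem \ref{thmlink}. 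The quasi-morphism route on $\Diff^1_\omega(\D)$ itself avoids this bookkeeping and is the most economical.
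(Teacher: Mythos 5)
Your proposal is correct and follows essentially the same route as the paper: the paper first proves (Lemma \ref{periodic}) that finite-order elements satisfy $\Cal_1(g)=0$ via homogeneity of the quasi-morphism, exactly as you do, and then passes to the limit using the $C^1$-continuity of $\Cal_1$ (Corollary \ref{ContinuityCAL1}, a consequence of Theorem \ref{thmlink}). Your side remarks are also consistent with the paper, which likewise makes no use of the pseudo-rotation hypothesis in this argument and states the corresponding proposition for arbitrary $f\in\Diff^1_\omega(\D)$.
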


\begin{coro}\label{Coro2}
Let $f$ be a $C^1$ irrational pseudo rotation of $\D$. If there exists a sequence $(n_k)_{k\in\N}$ such that $f^{n_k}$ converges to the identity in the $C^1$ topology, then we have
$$\Cal_1(f)=0.$$
\end{coro}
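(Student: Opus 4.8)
The plan is to avoid constructing explicit finite-order approximations of $f$ and instead to exploit directly the two structural properties of $\Cal_1$ already established: it is a \emph{homogeneous} quasi-morphism (Theorem \ref{thm1}) and it is continuous for the $C^1$ topology (Theorem \ref{thmlink}). Homogeneity gives, for every positive integer $n$, the identity $\Cal_1(f^{n})=n\,\Cal_1(f)$, and this is the lever that converts smallness of $\Cal_1(f^{n_k})$ into smallness of $\Cal_1(f)$ itself.

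First I would record that the sequence $(n_k)$ may be assumed to tend to $+\infty$. Indeed, an irrational pseudo-rotation has $0$ as its only periodic point, so $f^{n}\neq\id$ for every $n\geq 1$; if $(n_k)$ were bounded it would take some value $n\geq 1$ infinitely often, and the corresponding constant subsequence $f^{n_k}=f^{n}$ would converge to $f^{n}\neq\id$, contradicting $f^{n_k}\to\id$. Passing to a subsequence we thus have $n_k\to+\infty$.

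Next, applying homogeneity I get $\Cal_1(f^{n_k})=n_k\,\Cal_1(f)$ for each $k$. On the other hand, since $f^{n_k}\to\id$ in the $C^1$ topology and $\Cal_1$ is $C^1$-continuous, I obtain $\Cal_1(f^{n_k})\to\Cal_1(\id)$; and $\Cal_1(\id)=0$, either because $\Cal_1$ extends the Calabi invariant (which vanishes on the identity) or, more directly, because homogeneity forces $\Cal_1(\id)=\Cal_1(\id^{2})=2\,\Cal_1(\id)$. Combining the two relations yields $n_k\,\Cal_1(f)\to 0$.

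Finally, since $n_k\to+\infty$, the convergence $n_k\,\Cal_1(f)\to 0$ is possible only if $\Cal_1(f)=0$: were $\Cal_1(f)=c\neq 0$ we would have $|n_k c|\to+\infty$. This gives the claim. There is no serious obstacle here; the entire content of the statement is already packaged in the homogeneity and $C^1$-continuity of $\Cal_1$, and the only point deserving a word of care is the reduction to $n_k\to+\infty$, for which the pseudo-rotation hypothesis (absence of nontrivial periodic points) is exactly what is needed. This is the same mechanism as in the preceding theorem, where finite order forces the invariant to vanish and continuity propagates this to the limit, here applied to the powers $f^{n_k}$ rather than to an approximating sequence $(g_n)$.
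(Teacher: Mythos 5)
Your proof is correct and follows essentially the same route as the paper, which likewise combines the homogeneity $\Cal_1(f^{n_k})=n_k\,\Cal_1(f)$ with the $C^1$-continuity of $\Cal_1$ to get $n_k\,\Cal_1(f)\to\Cal_1(\id)=0$. Your reduction to $n_k\to+\infty$ is harmless but not needed: since $n_k\geq 1$, the bound $|n_k\,\Cal_1(f)|\geq|\Cal_1(f)|$ already forces $\Cal_1(f)=0$.
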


The morphisms $\ti{\Cal}$ and $\Cal$ are not continuous in the $C^0$ topology, see proposition \ref{notcontinuous}. Nevertheless, by a more precise study of the definition of $\Cal$ we obtain a $C^0$-rigidity result as follows.

\begin{theorem}\label{Myresult}
Let $f$ be a $C^1$ irrational pseudo rotation of $\D$. If there exists a sequence $(n_k)_{k\in\N}$ of integers such that $(f^{n_k})_{k\in\N}$ converges to the identity in the $C^0$ topology, then we have
$$\Cal_1(f)=0.$$
\end{theorem}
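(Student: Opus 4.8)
The plan is to turn the statement into a boundedness assertion and then to extract cancellation from the $C^0$ convergence through the angle definition of $\ti{\Cal}$. Since $f$ is an irrational pseudo rotation its only periodic point is $0$, so $f^{n}\neq\id$ for every $n\neq 0$; hence the exponents $n_k$ for which $f^{n_k}\to\id$ must satisfy $|n_k|\to\infty$, as a bounded infinite set of exponents would force $f^{n}=\id$ for some $n\neq0$. Because $\Cal_1$ is a homogeneous quasi-morphism (Theorem \ref{thm1}) we have the exact identity $\Cal_1(f^{n_k})=n_k\,\Cal_1(f)$. It is therefore enough to prove that the sequence $\big(\Cal_1(f^{n_k})\big)_{k}$ is bounded: combined with $|n_k|\to\infty$, this forces $\Cal_1(f)=0$.

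To access $\Cal_1(f^{n_k})$ I would work in $\ti{\Diff}^1_\omega(\D)$. As $f^{n_k}\to\id$ uniformly, the restriction $f^{n_k}|_{\Ss^1}$ tends to $\id$ in $C^0$, so it admits a lift $\psi_k$ to the universal cover of $\Ss^1$ with $\psi_k\to\id_\R$ uniformly; set $\ti g_k=(f^{n_k},\psi_k)\in\ti{\Diff}^1_\omega(\D)$. By Theorem \ref{thmlink},
$$\Cal_1(f^{n_k})=\ti{\Cal}(\ti g_k)-\ti{\rho}(\psi_k).$$
The numbers $\ti\rho(\psi_k)$ tend to $0$ (the rotation number of $f^{n_k}|_{\Ss^1}$ goes to $0$ in $\Tt^1$ and $\psi_k$ is the lift nearest the identity), hence are bounded, and the problem reduces to bounding $\ti{\Cal}(\ti g_k)=\int_{\D^2\setminus\Delta}\Ang_{\ti g_k}(x,y)\,\omega(x)\omega(y)$, using the angle expression of $\ti\Cal$ from Theorem \ref{thm2}.

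Here I would split the angle function into an endpoint part and a winding part. For $x\neq y$ write $\Ang_{\ti g_k}(x,y)=a_k(x,y)+W_k(x,y)$, where $2\pi a_k(x,y)=\arg\!\big((f^{n_k}(y)-f^{n_k}(x))/(y-x)\big)\in(-\pi,\pi]$ is the principal variation of the vector and $W_k(x,y)\in\Z$ is its winding number along the isotopy. Since $f^{n_k}\to\id$ uniformly, $a_k(x,y)\to 0$ for every fixed $x\neq y$, while $|a_k|\le\tfrac12$; as $\omega\otimes\omega$ is a finite measure, dominated convergence gives $\int_{\D^2\setminus\Delta}a_k\,\omega\omega\to 0$. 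It remains to control $\int_{\D^2\setminus\Delta}W_k\,\omega\omega$. Because $\Ang_{\ti g_k}$ depends only on the class $\ti g_k$ (as recalled before Theorem \ref{thm2}), I may compute it with a $C^0$-small isotopy $(g_t)_{\tin}$ from $\id$ to $f^{n_k}$ lying in that class, with $\sup_{t,z}|g_t(z)-z|\le\epsilon_k\to0$; then $V_t=g_t(y)-g_t(x)$ stays within $2\epsilon_k$ of $y-x$, so $W_k(x,y)=0$ as soon as $|y-x|>2\epsilon_k$. Thus $W_k$ is supported on the shrinking neighborhood $\{|x-y|\le 2\epsilon_k\}$ of the diagonal.

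The main obstacle is to bound $\int_{\{|x-y|\le 2\epsilon_k\}}W_k\,\omega\omega$ uniformly in $k$. The measure of this region is $O(\epsilon_k^2)$, but $C^0$-smallness alone does not bound $W_k$ there: a thin, fast twist is $C^0$-small yet produces large winding, so the integrand can be large exactly where I have little room. This is the point at which the pseudo rotation hypothesis must be used rather than mere $C^0$ proximity to $\id$: the only fixed point is $0$ and every orbit turns about $0$ with the same rotation number, so $f^{n_k}$ rotates all of $\D$ by the small angle $\beta_k=n_k\alpha-p_k\to0$. The idea is to compare $\ti g_k$ with the rigid rotation $\ti R_{\beta_k}$, for which the angle function is the constant $\beta_k$ and $\Cal_1(R_{\beta_k})=0$: the near-diagonal winding of $\ti g_k$ should then be matched against that of $\ti R_{\beta_k}$ and controlled by the rotation number, yielding a bound on $\int W_k\,\omega\omega$ independent of $k$. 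Establishing this near-diagonal estimate, and verifying that the $C^0$-small isotopy can indeed be chosen inside the prescribed class, is the technical heart of the argument; once it is in place, $\ti{\Cal}(\ti g_k)$ is bounded, hence so is $\Cal_1(f^{n_k})=n_k\Cal_1(f)$, and therefore $\Cal_1(f)=0$.
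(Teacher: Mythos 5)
Your overall reduction is sound and runs parallel to the paper's: homogeneity of $\Cal_1$ plus Theorem \ref{thmlink} reduces the statement to controlling the angle integral of $f^{n_k}$, and your far-from-diagonal analysis (the angle function lies within $o(1)$ of a single integer once $|x-y|$ exceeds a power of $\epsilon_k$) is exactly the content of the paper's Lemmas \ref{closed} and \ref{closeAng}. But the two points you defer as ``the technical heart'' are precisely where the proof lives, and the route you sketch for them would not close. First, your winding decomposition requires a $C^0$-small isotopy from $\id$ to $f^{n_k}$ lying in the homotopy class prescribed by $\psi_k$; nothing in the $C^0$ hypothesis provides one (that would need a local-contractibility statement for the relevant transformation group, an unproved extra claim, and with the area-preserving constraint it is not available off the shelf). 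Second, and more seriously, your proposed near-diagonal bound by comparison with the rotation $R_{\beta_k}$ fails for the very reason you yourself identify: a thin fast twist is $C^0$-close to $R_{\beta_k}$ just as it is $C^0$-close to $\id$, so $C^0$ proximity to a rotation gives no control whatsoever of the integer part $W_k$ on $\{|x-y|\le2\epsilon_k\}$. The pseudo-rotation hypothesis does not enter through any such comparison in the paper either.

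The paper's resolution avoids both problems. It never selects a small isotopy: it fixes one isotopy $I$ from $\id$ to $f$ (fixing the origin, up to conjugacy) and works with the concatenation $I^{q_n}$, letting the off-diagonal integer $k_n$ float instead of forcing it to vanish; evaluating $\Ang_{I^{q_n}}(0,y)$ at boundary points $y\in\Ss^1$ then identifies $k_n/q_n\to\ti\rho(\tphi)$. Near the diagonal it uses the cocycle identity
$$\Ang_{I^{q_n}}(x,y)=\sum_{j=0}^{q_n-1}\Ang_{I}\bigl(f^j(x),f^j(y)\bigr),$$
whence $\|\Ang_{I^{q_n}}\|_\infty\le q_n\|\Ang_I\|_\infty$, the finiteness of $\|\Ang_I\|_\infty$ being exactly where the $C^1$ regularity of $f$ enters (the angle function of a $C^1$ map extends boundedly to the diagonal). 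Since the near-diagonal region has measure $O(\epsilon_n)$, its contribution is $O\bigl(\epsilon_n(q_n\|\Ang_I\|_\infty+|k_n|)\bigr)$, and after dividing by $q_n$ --- your homogeneity step --- this is $O(\epsilon_n)\to0$, with no relation needed between $\epsilon_n$ and $q_n$. If you replace your rotation comparison by this cocycle bound (and drop the small-isotopy device, keeping the floating integer $k_n$ as above), your scheme closes, giving $|\Cal_1(f)|\le O(1/n_k)+O(\epsilon_k)\to0$.
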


There are already general results of $C^0$-rigidity of the pseudo-rotations. Bramham proved \cite{Bram} that every $C^\infty$ irrational pseudo-rotation $f$ is the limit, for the $C^0$ topology, of a sequence of periodic $C^\infty$ diffeomorphisms. Bramham \cite{Bram2} also proved that if we consider an irrational pseudo-rotation $f$ whose rotation number is super Liouville (we will define what it means later) then $f$ is $C^0$-rigid. That is, there exists a sequence of iterates $f^{n_j}$ that converges to the identity in the $C^0$-topology as $n_j\ra \infty$. Le Calvez \cite{CAL1} proved similar results for $C^1$ irrational pseudo-rotation $f$ whose restriction to $\Ss^1$ is $C^1$ conjugate to a rotation.\\

Then for $f$ a $C^1$ pseudo-rotation of the disk $\D$ the results of Bramham and Le Calvez provide a sequence of periodic diffeomorphisms $(g_n)_{n\in\N}$ which converges to $f$, the diffeomorphism $g_n$ may not be area-preserving but let us hope to completely answer question \ref{question}.\\

In a last section we give some examples where the rotation number of a pseudo-rotation satisfy some algebraic properties and where the hypothesis of Theorem \ref{Myresult} and Corollary \ref{Coro2} are satisfied.\\

\textbf{Organization}

We begin to give some additional preliminaries in section $1$. In a second section we give the formal definitions of the Calabi invariant of equations \ref{CalIntro}, \ref{Calintro} and \ref{Calangle} and their natural extensions given by Theorems \ref{thm1}, \ref{thm2} and \ref{thm3}. In section $3$ we give the proof the link betweens these extensions given by Theorem \ref{thmlink}. The last section concerns the results about the computation of the Calabi invariant for pseudo rotations.\\

				\section{Preliminaries}\label{preliminaries}
			
\textbf{Invariant measures.} Let us consider $f$ a homeomorphism of a topological space $X$. A Borel probability measure $\mu$ is $f$-invariant if for each Borel set $A$ we have 
$$\mu(f^{-1}(A))=\mu(A).$$
In other terms, the push forward measure $f_*\mu$ is equal to $\mu$. We denote by $\mathcal{M}(f)$ the set of $f$-invariant probability measures on $X$. It is well-known that the set $\mathcal{M}(f)$ is not empty if $X$ is compact.\\
			
			For a probability measure $\mu$ on $\D$ we will note $\Diff^1_\mu(\D)$ the subgroup of $\Diff^1_+(\D)$ that is the set of orientation preserving $C^1$ diffeomorphisms which preserve $\mu$. \\

\textbf{Quasi-morphism.} A function $F:G\ra\R$ defined on a group $G$ is a \emph{homogeneous quasi-morphism} if
\begin{enumerate}
\item there exists a constant $C\geq0$ such that for each couple $f,g$ in $G$ we have $|F(f\circ g)-F(f)-F(g)|<C$,	
\item for each $n\in \Z$ we have $F(f^n)=nF(f)$.
\end{enumerate}

\textbf{Rotation numbers of homeomorphisms of the circle.} The rotation number is defined on the group $\Homeo_+(\Ss^1)$ of homeomorphisms of $\Ss^1$ which preserve the orientation. We begin to give the definition of the rotation number on the lifted group $\ti{\Homeo}_+(\Ss^1)$ which is the set of homeomorphisms $\ti{g}:\R\ra\R$ such that $\ti{g}(x+1)=\ti{g}(x)+1$. There exists $\ti{\rho}\in\R$ such that for each $z\in\R$ and $n\in\Z$ we have $|\ti g^n(z)-z-n\ti\rho|<1$, see \cite{KAT} for example. The number $\ti\rho$ is called the \emph{rotation number} of $\ti g$ and denoted $\ti{\rho}(\ti{g})$. It defines a map $\ti{\rho}:\ti{\Homeo}_+(\Ss^1)\ra\R.$ \\

We denote by $\ti\delta:\R\ra\R$ the displacement function of $\ti{g}$ where $\ti\delta(z)=\ti{g}(z)-z$ is one-periodic and lifts  where for every $\ti g\in\mathrm{Homeo}_+(\Ss^1)$
$$\ti{\rho}(\ti{g})=\int_{\Ss^1}\delta d\mu= \lim_{n\ra \infty} \frac{1}{n}\sum_{i=1}^n \delta(g^i(z)).$$
The map $\ti{\rho}$ is the unique homogeneous quasi-morphism from $\ti{\Diff}^1_+(\Ss^1)$ to $\R$ which takes the value $1$ on the translation by $1$, see \cite{GHYS2} for example. More precisely for each $\ti{f},\ti{g}\in\ti{\Homeo}_+(\Ss^1)$ it holds that $|\ti{\rho}(\ti{f})-\ti{\rho}(\ti{g})|<1$ and for each $n\in\Z$ we have $\ti{\rho}(\ti{f}^n)=n\ti{\rho}(\ti{f})$.  

Moreover, $\ti{\rho}(\ti{g})$ naturally lifts a map $\rho: \Homeo_+(\Ss^1)\ra\Tt^1$. Indeed, if we consider $g\in\Homeo_+(\Ss^1)$ and two lifts $\ti g$ and $\ti g'$ of $g$ there exists $k\in\Z$ such that $\ti g'=\ti g$ hence we have $\ti\rho (\ti g')= \ti\rho(\ti g)+k$. By the Birkhoff ergodic theorem for every $z\in\R$ and every $g$-invariant measure $\mu$ we have
$$\ti{\rho}(\ti{g})=\int_{\Ss^1}\delta d\mu.$$

Let us describe why $\ti{\rho}$ is not a morphism and only a quasi-morphism. A homeomorphism of the circle has a fixed point if and only if its rotation number is zero, see \cite{KAT} chapter $11$ for more details. Below we give an example of two homeomorphisms $\phi$ and $\psi$ of $\Ss^1$ of rotation number zero such that the composition $\phi\circ\psi$ gives us a homeomorphism as in Figure \ref{exampleCalcompo} without fixed point and so the rotation number of the composition is not equal to $0$. \\

Let us consider the two homeomorphisms of rotation number $0$ with one fixed point as in Figures \ref{exampleCal} and \ref{exampleCalcompo}.

 \begin{figure}[htp]
 \begin{center}
 \begin{tikzpicture}

\draw (0,0) -- (3,0);
\draw (3,0) -- (3,3);
\draw (3,3) -- (0,3);
\draw (0,3) -- (0,0);
\draw[dotted] (0,0) -- (3,3);

\draw[red] (0,0) ..controls +(0.5,0.5) and +(-1,-1).. (2,1);
\draw[red] (2,1) ..controls +(0.5,0.5) and +(-0.5,-0.5).. (3,3);
\draw (3/2,-0.5) node{$\phi$};

\draw (5,0) -- (8,0);
\draw (8,0) -- (8,3);
\draw (8,3) -- (5,3);
\draw (5,3) -- (5,0);
\draw[dotted] (5,0) -- (8,3);
\draw[red] (6,0) ..controls +(0.1,0.5) and +(-0.5,-0.5).. (7,2);
\draw[red] (7,2) ..controls +(0.5,0.5) and +(-0.1,0).. (8,2.5); 
\draw[red] (5,2.5) ..controls +(0.1,0) and +(-0.1,-0.5).. (6,3);
\draw (6.5,-0.5) node{$\psi$}; 

\end{tikzpicture}
\end{center}
\caption{ }
\label{exampleCal}
\end{figure}

 \begin{figure}[htp]
 \begin{center}
 \begin{tikzpicture}

\draw (5,0) -- (8,0);
\draw (8,0) -- (8,3);
\draw (8,3) -- (5,3);
\draw (5,3) -- (5,0);
\draw[dotted] (5,0) -- (8,3);
\draw[red] (6,0) ..controls +(0.1,0.5) and +(-0.5,-0.5).. (7,1.7);
\draw[red] (7,1.7) ..controls +(0.5,0.5) and +(-0.1,0).. (8,2.5); 
\draw[red] (5,2.5) ..controls +(0.1,0) and +(-0.1,-0.5).. (6,3);
\draw (6.5,-0.5) node{$\phi\circ\psi$}; 

\end{tikzpicture}
\end{center}
\caption{ }
\label{exampleCalcompo}
\end{figure}

For $g\in\Homeo^+(\Ss^1)$ there is a bijection between the lifts of $g$ to $\R$ and the isotopies from $\id$ to $g$ as follows. Let $I=(g_t)_{\tin}$ be an isotopy from $\id$ to $g$, the lifted isotopy $\ti I=(\ti g_1)_{\tin}$ of $I$ defines a unique lift $\ti g_1$ of $g$. Then for an isotopy $I$ from $\id$ to $g$, let us denote $\ti{g}$ the time-one map of the lifted isotopy $\ti{I}$ on $\R$, we can define the rotation number $\ti\rho(I)\in\R$ of $I$ to be the rotation number $\ti\rho(\ti g)$ of $\ti{g}$. If we consider $f$ a homeomorphism of the disk isotopic to the identity and $I=(f_t)_{\tin}$ an isotopy from $\id$ to $f$ then we will denote $\ti\rho(I|_{\Ss^1})\in\R$ the rotation number of the restriction of the isotopy $I$ to $\Ss^1$. If we consider another isotopy $I'$ from $\id$ to $g$ one may prove that there exists an integer $k\in\Z$ such that $I'$ is homotopic to $R^kI$ where the isotopy $R=(R_t)_{\tin}$ satisfies $R_t(z)=z\e^{2 \pi i t}$ for every $z\in\Ss^1$ and every $\tin$. We consider $\ti I$ the lifted isotopy of $I'$ and we denote $\ti g'$ its time-one map. Hence $\ti g$ and $\ti g$ are two lifts of $g$ such that $\ti g'=\ti g+k$ and $\ti\rho(\ti g')=\ti\rho(\ti g)+k$ and so the number $\ti\rho(I)$ does not depend on the choice of the isotopy in the homotopy class of $I$.\\

\textbf{Irrational pseudo rotation.}
An irrational pseudo-rotation is an area-preserving homeomorphism $f$ of $\D$ that fixes $0$ and that does not possess any other periodic point. To such a homeomorphism is associated an irrational number $\alpha\in \R/\Z\backslash\Q/\Z$, called the \textit{rotation number} of $f$, characterized by the following : every point admits $\alpha$ as a rotation number around the origin. To be more precise, choose a lift $\ti{f}$ of $f|_{\D\backslash \{0\}}$ to the universal covering space $\ti{D}=\R\times(0,1]$. There exists $\ti\alpha\in\R$ such that $\ti\alpha+\Z=\alpha$ and for every compact set $K\subset \D\backslash \{0\}$ and every $\epsilon>0$, one can find $N\geq1$ such that
$$\forall n\geq N, \  \ti{z}\in \pi^{-1}(K)\cap \ti{f}^{-n}(\pi^{-1}(K)) \Rightarrow |\frac{p_2(\ti{f}^n(\ti{z}))-p_2(\ti{z})}{n}-\ti\alpha |\leq\epsilon,$$
where $\pi :(r,\theta)\mapsto (r\cos(2\pi \theta),r\sin(2\pi\theta)$ is the covering projection and $p_2: (r,\theta)\mapsto \theta$ the projection on the second coordinate. If moreover $f$ is a $C^k$ diffeomorphism $1\leq k\leq +\infty$ we will call $f$ a $C^k$ irrational pseudo-rotation.\\

Notice that for the rotation number $\alpha$ of an irrational pseudo-rotation $f$ is equal to $\rho(f|_{\Ss^1})$.\\

One can construct irrational pseudo-rotations with the method of fast periodic approximations, presented by Anosov and Katok \cite{AK}. One may see \cite{FAT1,FAY,FAY1,HAN,ROUX6} for further developments about this method and see \cite{BEG,BEG1} for other results on irrational pseudo-rotations.

				\section{Three extensions}\label{threeExtensions}

In this section we will explain why the functions $\Cal_1$, $\ti\Cal_2$ and $\ti\Cal_3$ are well-defined and we will establish the relations between them. The full statement like the continuity or the quasi-morphism property will be proved in the next section. \\
 
		 \subsection{Action function}
		 
Let us consider $f\in \Diff^1_\omega(\D)$ and $\lambda$ a Liouville $1$-form such that $d\lambda=\omega$. The fact that $H_1(\D,\R)=0$ implies that the continuous $1$-form $f^*\lambda-\lambda$ is exact. More precisely its integral along each loop $\gamma\subset \D$ is zero. Consequently the map $(r,\theta) \mapsto \int_{\gamma_z}f^*\lambda-\lambda$ is a $C^1$ primitive of $f^*\lambda-\lambda$, equal to $0$ at the origin, where for every $z\in\D$ the path $\gamma_z : [0,1] \ra \D$ is such that $\gamma_z(t)=tz$.\\

If we suppose that $f$ is compactly supported on $\mathring{\D}$ then it is natural to consider the unique $C^1$ function $A:\D\ra\R$ that is zero near the boundary of $\D$ and that satisfies
\begin{equation}
dA=f^*\lambda-\lambda.
\end{equation} 
Without the compact support hypothesis we have the following proposition.

\begin{prop}\label{mu}
If we consider a $C^1$ function $A:\D\ra\R$ such that $dA=f^*\lambda-\lambda$ then the number
$$\int_{\Ss^1} A|_{\partial \D}d\mu$$
does not depend on the choice of $\mu$ in $\mathcal{M}(f|_{\Ss^1})$. \\
\end{prop}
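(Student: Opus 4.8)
The plan is to reduce the statement to a one-dimensional computation on $\Ss^1$ and then to recognize the integral as a rotation number. Since $f\in\Diff^1_\omega(\D)$ is a diffeomorphism of the closed disk, it preserves the boundary, so $g:=f|_{\Ss^1}$ is an orientation preserving $C^1$ diffeomorphism of $\Ss^1$ and $f\circ\iota=\iota\circ g$, where $\iota:\Ss^1\hookrightarrow\D$ denotes the inclusion. Restricting the identity $dA=f^*\lambda-\lambda$ along $\iota$ and writing $a:=A|_{\Ss^1}$ and $\beta:=\iota^*\lambda$, I would obtain the one-dimensional relation
$$da=g^*\beta-\beta\quad\text{on }\Ss^1.$$
Stokes' theorem together with the normalization of $\omega$ gives $\int_{\Ss^1}\beta=\int_{\partial\D}\lambda=\int_\D\omega=1$, which is the only feature of $\beta$ that the argument uses.

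Next I would pass to the universal cover $\R$ of $\Ss^1=\R/\Z$. Let $\ti g$ be a lift of $g$, write $\ti\beta=p(\theta)\,d\theta$ with $p$ one-periodic and $\int_0^1 p=1$, and let $\ti a$ be the one-periodic lift of $a$. The relation above lifts to $\ti a'=p(\ti g)\,\ti g'-p$. Choosing the primitive $P(\theta)=\int_0^\theta p$, which satisfies $P(\theta+1)=P(\theta)+1$ precisely because $\int_0^1 p=1$, the right-hand side is exactly $\frac{d}{d\theta}\bigl(P\circ\ti g-P\bigr)$, so $\ti a=P\circ\ti g-P+C$ for some constant $C$. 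Writing $P=\id+Q$ with $Q$ one-periodic, this descends, back on $\Ss^1$, to
$$a=\delta+(Q\circ g-Q)+C,$$
where $\delta=\ti g-\id$ is the displacement function of $\ti g$.

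Finally I would integrate against an arbitrary $\mu\in\mathcal{M}(f|_{\Ss^1})$. The coboundary term vanishes, since $\int_{\Ss^1}Q\circ g\,d\mu=\int_{\Ss^1}Q\,d\mu$ by $g$-invariance of $\mu$, and by the Birkhoff ergodic theorem recalled in Section \ref{preliminaries} one has $\int_{\Ss^1}\delta\,d\mu=\ti\rho(\ti g)$, a quantity which does not depend on $\mu$. Hence $\int_{\Ss^1}a\,d\mu=\ti\rho(\ti g)+C$, which is independent of the choice of $\mu$, as claimed. The step I expect to be the main obstacle is establishing the decomposition $a=\delta+(Q\circ g-Q)+C$: it rests entirely on the lift bookkeeping, namely checking that $P$ has the correct quasi-periodicity $P(\theta+1)=P(\theta)+1$ (this is exactly where $\int_{\Ss^1}\beta=1$ enters) so that $P\circ\ti g-P$ is genuinely one-periodic and splits off the displacement function $\delta$; everything after that is the observation that the coboundary averages to zero and the displacement averages to the rotation number. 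One should also note that the decomposition is lift-dependent but the conclusion is not, since changing $\ti g$ by an integer shifts $\delta$ and $\ti\rho(\ti g)$ by the same integer while $C$ compensates.
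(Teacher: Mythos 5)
Your argument is correct, but it takes a genuinely different route from the paper's. The paper proves Proposition \ref{mu} by a case analysis: if $f|_{\Ss^1}$ is uniquely ergodic there is nothing to prove, and otherwise $\rho(f|_{\Ss^1})$ is rational, so by the ergodic decomposition theorem it suffices to compare the measures $\mu_z$ carried by $q$-periodic orbits; for two periodic points $z,w$ the difference $\int_{\Ss^1} A\,d\mu_z-\int_{\Ss^1} A\,d\mu_w$ is computed as the telescoping sum $\frac{1}{q}\sum_{k=0}^{q-1}\int_{f^k(\gamma)}(f^*\lambda-\lambda)=\frac{1}{q}\bigl(\int_{f^q(\gamma)}\lambda-\int_\gamma\lambda\bigr)=0$, since $f^q(\gamma)$ is a reparametrization of $\gamma$. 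You instead restrict the action identity to the boundary and solve the resulting cohomological equation globally, obtaining $a=\delta+(Q\circ g-Q)+C$, so that the integral against any invariant measure equals $\ti{\rho}(\ti{g})+C$. This avoids both the case split and the ergodic decomposition, treats all invariant measures uniformly, and gives strictly more than the statement: it identifies the common value, which is precisely the identity the paper re-derives later (for the particular Liouville form $\lambda=\frac{r^2}{2\pi}d\theta$, where your $Q$ vanishes and $A_1=\delta+C$ on $\Ss^1$) in the proof that $\ti{\Cal}_3(\ti f)=\Cal_1(f)+\ti{\rho}(\tphi)$; in effect your proof generalizes that computation to an arbitrary Liouville form, with $\int_{\Ss^1}\iota^*\lambda=\int_\D\omega$ supplying the quasi-periodicity $P(\theta+1)=P(\theta)+1$. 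What the paper's proof buys in exchange is elementarity at this point of the text: it needs only path integrals and standard ergodic decomposition, whereas yours relies on the identity $\int_{\Ss^1}\delta\,d\mu=\ti{\rho}(\ti{g})$ for \emph{every} invariant measure $\mu$ --- which is indeed stated in the preliminaries and is valid beyond the ergodic case because $\frac{1}{n}(\ti{g}^n-\id)$ converges uniformly to $\ti{\rho}(\ti{g})$. Your lift bookkeeping is sound as well, including the consistency check that replacing $\ti{g}$ by $\ti{g}+k$ shifts $\delta$ and $\ti{\rho}$ by $k$ while the constant $C$ shifts by $-k$, leaving $\int_{\Ss^1}a\,d\mu$ unchanged.
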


\begin{proof} To prove the independence over $\mu$ there are two cases to consider.\\

$\bullet$ If there exists only one $f|_{\Ss^1}$-invariant probability measure on $\Ss^1$ the result is obvious. In this case $f|_{\Ss^1}$ is said to be uniquely ergodic.\\ 
 
$\bullet$ If $f|_{\Ss^1}$ is not uniquely ergodic then by Poincaré's theory $\rho(f|_{\Ss^1})=\frac{p}{q}+\Z$ is rational with $p\wedge q=1$. The ergodic decomposition theorem, see \cite{KAT} for example, tells us that an $f|_{\Ss^1}$ invariant measure is the barycenter of ergodic $f|_{\Ss^1}$-invariant measures. Moreover, each ergodic measure of $f|_{\Ss^1}$ is supported on a periodic orbit as follows. For $z$ a $q$-periodic point of $f|_{\Ss^1}$, we define the probability measure $\mu_z$ supported on the orbit of $z$ by 
 $$\mu_z=\frac{1}{q}\sum_{k=0}^{q-1}\delta_{f^k(z)},$$
 where $\delta_z$ is the Dirac measure on the point $z\in\Ss^1$. Hence it is sufficient to prove that $\int_\D A(f,\lambda,\mu_z) \omega$ does not depend of the choice of a periodic point $z\in\Ss^1$.\\

Let us consider two periodic points $z$ and $w$ of $f|_{\Ss^1}$. We consider an oriented path $\gamma\subset \Ss^1$ from $z$ to $w$. We compute 
 \begin{align*}
 \int_{\Ss^1} Ad\mu_z-\int_{\Ss^1} Ad\mu_w & =\frac{1}{q}\sum_{k=0}^{q-1}A(f^k(z))-A(f^k(w))\\
  &=\frac{1}{q}\sum_{k=0}^{q-1}\int_{f^k(\gamma)}dA\\
  &=\frac{1}{q}\sum_{k=0}^{q-1}\int_{f^k(\gamma)}f^*(\lambda)-\lambda\\
  &=\int_{f^{q}(\gamma)}\lambda- \int_\gamma \lambda \\
  &=0 
 \end{align*}
where the last equality is due to the fact that $f^q(\gamma)$ is a reparametrization of the path $\gamma$.
\end{proof}

Proposition \ref{mu} allows us to make a natural choice of the action function to define an extension of the Calabi invariant as follows.

\begin{theorem}\label{thm1.0}
For each $f\in \Diff^1_\omega(\D)$ we consider the unique $C^1$ function $A_f$ of $f$ such that $dA_f=f^*\lambda-\lambda$ and $\int_{\Ss^1} A_fd\mu=0$ where $\lambda$ is a Liouville form of $\omega$ and $\mu$ an $f$-invariant probability measure on $\Ss^1$. The number 
$$\Cal_1(f)=\int_\D A_f(z) \omega(z)$$
does not depend on the choice of $\lambda$ or $\mu$. 
\end{theorem}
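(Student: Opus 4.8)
The plan is to isolate the two independence claims and dispatch them in turn, since Proposition~\ref{mu} has already done the work for the measure $\mu$. First I would record the existence and uniqueness of the normalized primitive. Because $H_1(\D,\R)=0$, the closed $1$-form $f^*\lambda-\lambda$ is exact, so it admits a $C^1$ primitive, and any two primitives differ by a constant since $\D$ is connected. Proposition~\ref{mu} shows that for any such primitive $A$ the real number $\int_{\Ss^1}A\,d\mu$ is independent of $\mu\in\mathcal{M}(f|_{\Ss^1})$; hence subtracting this constant yields a unique primitive $A_f$ with $\int_{\Ss^1}A_f\,d\mu=0$. This already settles the $\mu$-independence of both $A_f$ and $\Cal_1(f)$, so the substance of the theorem is reduced to independence of the Liouville form $\lambda$.

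To compare two Liouville forms $\lambda$ and $\lambda'$ with $d\lambda=d\lambda'=\omega$, I would note that $\lambda'-\lambda$ is closed, hence equals $dg$ for some $C^1$ function $g\colon\D\ra\R$. The key algebraic identity is then
$$f^*\lambda'-\lambda'=(f^*\lambda-\lambda)+d(g\circ f-g),$$
which shows that the corresponding primitives differ by the coboundary $g\circ f-g$ up to an additive constant; choosing this constant appropriately I may write $A'=A+g\circ f-g$. Since $\omega$ induces the Lebesgue probability measure, $\int_\D\omega=1$, and therefore
$$\int_\D A'_f\,\omega-\int_\D A_f\,\omega=\int_\D(g\circ f-g)\,\omega-\int_{\Ss^1}(g\circ f-g)\,d\mu,$$
where $A_f,A'_f$ denote the normalized primitives for $\lambda,\lambda'$ respectively.

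It remains to show both terms on the right vanish, and this is where the invariance hypotheses enter. Because $f$ preserves the Lebesgue measure on $\D$, the change of variables $\int_\D g\circ f\,\omega=\int_\D g\,\omega$ kills the first term; because $\mu$ is $f|_{\Ss^1}$-invariant, the identity $\int_{\Ss^1}g\circ f\,d\mu=\int_{\Ss^1}g\,d\mu$ kills the second. I expect the main conceptual point — rather than any computational obstacle — to be precisely this double use of invariance: the volume term cancels by area-preservation in the interior, while the normalization correction cancels by invariance of $\mu$ on the boundary, and it is the matching of these two cancellations that makes the subtraction of $\int_{\Ss^1}A\,d\mu$ the correct normalization. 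I would finish by remarking that the same computation, specialized to a primitive $g$ of $\lambda'-\lambda$, confirms that the constant ambiguities in the primitives never affect $\Cal_1(f)$, so the number is well defined.
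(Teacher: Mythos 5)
Your proposal is correct and follows essentially the same route as the paper: reduce $\mu$-independence to Proposition~\ref{mu}, write $\lambda'-\lambda=dg$ so that the normalized primitives differ by the coboundary $g\circ f-g$ plus a constant, and kill the interior term by $\omega$-preservation and the boundary correction by $\mu$-invariance. The only cosmetic difference is bookkeeping: the paper first proves the additive constant $c$ vanishes via the normalization, while you fold the constant into the single identity $\int_\D A'_f\,\omega-\int_\D A_f\,\omega=\int_\D(g\circ f-g)\,\omega-\int_{\Ss^1}(g\circ f-g)\,d\mu$ (using $\int_\D\omega=1$) and show both terms vanish, which amounts to the same two cancellations.
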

 
\begin{proof}
The independence on the measure $\mu$ comes from Proposition \ref{mu} and it remains to prove the independence on $\lambda$.\\
 Let us consider another primitive $\lambda'$ of $\omega$. We denote $A$ and $A'$ the two functions such that $dA=f^*\lambda-\lambda$ and $dA'=f^*\lambda'-\lambda'$ and such that for each $\mu\in \mathcal{M}(f|_{\Ss^1})$ we have $\int_{\Ss^1}A d\mu=\int_{\Ss^1}A' d\mu=0$.\\
 
The $1$-form $\lambda-\lambda'$ is closed because $d\lambda-d\lambda'=\omega-\omega=0$. So there exists a smooth function $u:\D\ra\R$ such that $\lambda'=\lambda+du$. We compute
\begin{align*}
dA'&=f^*(\lambda+du)-(\lambda+du)\\
&=f^*\lambda-\lambda + d(u\circ f-u)\\
&=dA+ d(u\circ f-u).
\end{align*}
Thus there exists a constant $c$ such that 
$$A'=A+u\circ f-u +c.$$
For a measure $\mu\in\mathcal{M}(f|_{\Ss^1})$ the condition $\int_{\Ss^1} A'd\mu=0=\int_{\Ss^1} Ad\mu$ implies that 
$$\int_{\Ss^1} A'd\mu=\int_{\Ss^1} Ad\mu+\int_{Ss^1} (u\circ f|_{\Ss^1} -u)d\mu+c=\int_{\Ss^1} Ad\mu,$$
Howeover $\int_{\Ss^1} (u\circ f|_{\Ss^1} -u)d\mu=0$ since $f|_{\Ss^1}$ preserves $\mu$ we have
$$c=0.$$
Finally $f$ preserves $\omega$ hence $\int_{\D}(u\circ f-u)\omega=0$ and we can conclude that
$$\int_\D A'\omega=\int_{\D} A\omega.$$
\end{proof}

We compute the extension $\Cal_1$ of rotations of the disk.
\begin{prop}\label{rotation}
For $\theta\in\R$ the rotation $R_{\theta}$ of angle $\theta$ satisfies 
$$\Cal_1(R_{\theta})=0.$$
\end{prop}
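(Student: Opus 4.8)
The plan is to exploit the rotational symmetry of $R_\theta$ by working with a Liouville form adapted to it. By Theorem \ref{thm1.0} the number $\Cal_1(R_\theta)$ does not depend on the choice of the primitive $\lambda$ of $\omega$, so I am free to replace the $1$-form coming from the radial paths $\gamma_z(t)=tz$ (which has no reason to behave nicely under $R_\theta$) by the standard rotation-invariant primitive. In Cartesian coordinates I would take $\lambda=\frac{1}{2\pi}(u\,dv-v\,du)$, and a quick check gives $d\lambda=\frac{1}{2\pi}(du\wedge dv-dv\wedge du)=\frac{1}{\pi}du\wedge dv=\omega$, so $\lambda$ is indeed a Liouville form. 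Passing to polar coordinates $(r,\phi)$ via $u=r\cos\phi$, $v=r\sin\phi$, this reads $\lambda=\frac{1}{2\pi}r^2\,d\phi$, which makes the symmetry transparent.

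The key step is then immediate: the rotation acts by $R_\theta(r,\phi)=(r,\phi+\theta)$, leaving $r$ unchanged and translating $\phi$ by the constant $\theta$, so that $R_\theta^*\lambda=\frac{1}{2\pi}r^2\,d(\phi+\theta)=\frac{1}{2\pi}r^2\,d\phi=\lambda$. Consequently $R_\theta^*\lambda-\lambda=0$, and any $C^1$ function $A$ with $dA=R_\theta^*\lambda-\lambda$ must be constant on $\D$.

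To pin down the constant I would invoke the normalization built into Theorem \ref{thm1.0}: the action function $A_{R_\theta}$ is the unique primitive of $R_\theta^*\lambda-\lambda$ satisfying $\int_{\Ss^1}A_{R_\theta}\,d\mu=0$ for an $R_\theta|_{\Ss^1}$-invariant probability measure $\mu$ (such a measure exists since $\Ss^1$ is compact). Since $A_{R_\theta}$ is a constant $c$, this condition reads $c=\int_{\Ss^1}c\,d\mu=0$, whence $A_{R_\theta}\equiv 0$. Therefore
$$\Cal_1(R_\theta)=\int_\D A_{R_\theta}(z)\,\omega(z)=0.$$

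There is no genuine obstacle in this computation; the only subtlety, and the point the argument really rests on, is to use the independence of $\Cal_1$ on $\lambda$ proved in Theorem \ref{thm1.0} so as to select the rotation-invariant form $\frac{1}{2\pi}r^2\,d\phi$. With any other primitive the form $R_\theta^*\lambda-\lambda$ need not vanish and $A_{R_\theta}$ need not be identically zero, even though its integral against $\omega$ is of course still $0$; the symmetric choice simply trivializes the whole computation in one line.
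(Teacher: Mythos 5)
Your proof is correct and is essentially the paper's own argument: the paper likewise chooses the rotation-invariant Liouville form $\lambda=\frac{r^2}{2\pi}d\theta$, observes $R_\theta^*\lambda-\lambda=0$ so the action function is constant, and concludes it vanishes from the normalization $\int_{\Ss^1}A\,d\mu=0$. You have merely spelled out the details (the check $d\lambda=\omega$ and the use of the $\lambda$-independence from Theorem \ref{thm1.0}) that the paper leaves implicit.
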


\begin{proof}
For the Liouville form $\lambda=\frac{r^2}{2\pi}d\theta$ of $\omega$ we have $R^*_\theta\lambda -\lambda=0$ thus the action function $A$ is constant. So it is equal to $0$ and we obtain the result.
\end{proof}

\subsection{Angle function}
The following interpretation is due to Fathi in his thesis \cite{FAT} in the case of compactly supported symplectic diffeomorphisms of the unit disk. This interpretation is also developped by Ghys and Gambaudo in see \cite{GHYS}.\\

Let us consider $f\in\Diff_+^1(\D)$ and $I=(f_t)_{t\in[0,1]}$ an isotopy from $\id$ to $f$. For $x,y\in\D$ distinct we can consider the vector $v_t$ from $f_t(x)$ to $f_t(y)$ and we denote by $\Ang_I(x,y)$ the angle variation of the vector $v_t$ for $t\in[0,1]$ defined as follows.\\

We have the polar coordinates $(r,\theta)$ and a differential form
$$d\theta=\frac{udv-vdu}{u^2+v^2},$$
where $(u,v)$ are the cartesian coordinates. For every couple $(x,y)\in\D^2\backslash\Delta$ we define 
\begin{equation}\label{Angpolar}
\Ang_I(x,y)=\frac{1}{2\pi}\int_\gamma d\theta,
\end{equation}
where $\gamma:t\ra f_t(x)-f_t(y)$.\\

The function $\Ang_I$ is continuous on the complement of the diagonal of $\D\times\D$. Moreover, if $f$ is at least $C^1$ then the function $\Ang_I$ can be extended on the diagonal into a bounded function on $\D\times\D$. Indeed, we consider $K$ the compact set of triplets $(x,y,d)$ where $(x,y)\in \D\times\D$ and $d$ a half line in $\R^2$ containing $x$ and $y$ and oriented by the vector joining $x$ to $y$ if $ x\neq y$. If $x$ and $y$ are distincts, the half line $d$ is uniquely determined and $\D\times\D\backslash\Delta$ can be embedded in $K$ as a dense and open set. We define $\Ang_I(x,x,d)$ as the variation of angle of the half lines $df_t(d)$ for $\tin$. This number is well-defined and extends $\Ang_I$ into a continuous function on $K$. \\

For $\ti f=(f,\ti\phi)\in\ti\Diff^1_\omega(\D)$ and two Hamiltonian isotopies $I=(f_t)_{\tin}$ and $I'=(f'_t)_{\tin}$ from $\id$ to $f$ associated to $\tphi$. The isotopies $I'$ and $I$ are homotopic so for every couple $(x,y)\in\D^2\backslash\Delta$ we have
$$\int_\gamma d\theta=\int_{\gamma'} d\theta,$$
where $\gamma:t\mapsto f_t(x)-f_t(y)$ and $\gamma':t\mapsto f'_t(x)-f'_t(y)$. Hence,  we can define the angle function $\Ang_{\ti f}$ of $\ti f$ by
$$\Ang_{\ti f}=\Ang_I.$$

We have the following lemma.

\begin{lem}\label{choiceI}
Let us consider $\ti f=(f,\tphi)\in\ti\Diff^1_\omega(\D)$. For every $(x,y)\in\D^2\backslash\Delta$ the number $\Ang_{\ti f}(x,y)-\ti{\rho}(\tphi)$ only depends on $f$.
\end{lem}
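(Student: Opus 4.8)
The plan is to show that the combination $\Ang_{\ti f}(x,y)-\ti\rho(\tphi)$ is insensitive to the only remaining freedom once $f$ is fixed, namely the choice of the lift $\tphi$ of $f|_{\Ss^1}$. So I fix $f\in\Diff^1_\omega(\D)$ and let $\tphi,\tphi'$ be two lifts of $f|_{\Ss^1}$ to $\R$, giving two elements $\ti f=(f,\tphi)$ and $\ti f'=(f,\tphi')$ of $\ti\Diff^1_\omega(\D)$. By definition of a lift of a circle homeomorphism there is an integer $k\in\Z$ with $\tphi'=\tphi+k$, and by the translation property of the rotation number recalled in Section \ref{preliminaries} we have $\ti\rho(\tphi')=\ti\rho(\tphi)+k$. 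Hence it suffices to prove that $\Ang_{\ti f'}(x,y)=\Ang_{\ti f}(x,y)+k$ for every $(x,y)\in\D^2\backslash\Delta$, since then the two shifts by $k$ cancel in $\Ang-\ti\rho$.

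\textbf{Reduction to composing with the rotation isotopy.} I would choose an isotopy $I=(f_t)_{\tin}$ from $\id$ to $f$ representing $\tphi$. As recalled in the preliminaries, an isotopy $I'$ representing $\tphi'$ is homotopic to $R^kI$, where $R=(R_t)_{\tin}$ is the rotation isotopy $R_t(z)=z\e^{2\pi i t}$; indeed the restriction of $R^kI$ to $\Ss^1$ lifts to $\tphi+k=\tphi'$, so $R^kI$ and $I'$ define the same lift and are therefore homotopic. Since $\Ang_{\ti f}$ depends only on the homotopy class of the representing isotopy, I may compute directly with $I'=R^kI$, that is $f'_t=R_t^k\circ f_t$.

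\textbf{The core angle computation.} For distinct $x,y$ set $\gamma(t)=f_t(x)-f_t(y)$ and $\gamma'(t)=f'_t(x)-f'_t(y)$. Identifying $\R^2$ with $\Cc$, the map $R_t^k$ acts as multiplication by $\e^{2\pi i k t}$, whence $\gamma'(t)=\e^{2\pi i k t}\gamma(t)$. Thus along $\gamma'$ the argument is that of $\gamma$ augmented by $2\pi k t$, and integrating $d\theta$ over $[0,1]$ and dividing by $2\pi$ as in \ref{Angpolar} gives $\Ang_{\ti f'}(x,y)=\Ang_{\ti f}(x,y)+k$. Combining with $\ti\rho(\tphi')=\ti\rho(\tphi)+k$ yields $\Ang_{\ti f'}(x,y)-\ti\rho(\tphi')=\Ang_{\ti f}(x,y)-\ti\rho(\tphi)$, which is the asserted independence of the lift, hence dependence on $f$ alone.

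\textbf{Main obstacle.} The argument is essentially bookkeeping; the only slightly delicate point is to check that multiplying the path $\gamma$ by the unit complex number $\e^{2\pi i k t}$ contributes exactly $k$ (and not some other integer) to the normalized angle variation, and that this identity persists on the diagonal, where $\Ang_{\ti f}$ was extended by continuity through the compact space $K$ of triples $(x,x,d)$. On the diagonal the same identity holds because $R_t^k$ rotates the half-line $d$ by the angle $2\pi k t$, so the extended function is again shifted by $k$; I expect this diagonal case, rather than the main computation, to be the only point requiring a little care.
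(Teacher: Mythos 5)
Your proof is correct and follows essentially the same route as the paper: both reduce to the fact that any two isotopies from $\id$ to $f$ differ, up to homotopy, by a power $R^k$ of the rotation isotopy, so that $\Ang$ shifts by $k$ while $\ti\rho$ shifts by the same $k$, and the two shifts cancel; your explicit computation $\gamma'(t)=\e^{2\pi i k t}\gamma(t)$ merely fills in the step the paper states by appeal to its definition of $\Ang_I$. Note that your concern about the diagonal is moot here, since the lemma is stated only for $(x,y)\in\D^2\backslash\Delta$.
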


\begin{proof}

Let us consider $I'$ another isotopy from $\id$ to $f$.\\

There exists $k\in\Z$ such that $I'$ is homotopic to $R_{2\pi}^k I$ and by definition of $\Ang_h$ given by equation \ref{angdef}  we have $\Ang_{R_{2\pi}^k I}=\Ang_{I}+k$. Moreover $I'$ is in the same homotopy class of $R_{2\pi}^k I$ and we obtain $\Ang_{I'}=\Ang_I+k$. Since the rotation number also satisfies $\ti\rho(I'|_{\Ss^1})=\ti\rho(I|_{\Ss^1})+k$, the result follows.
\end{proof}

Lemma \ref{choiceI} allows us to extend the Calabi invariant on the lifted group $\ti{\Diff}^1_\omega(\D)$ as follows.
\begin{theorem}\label{Cal2}
Let us consider $\ti f=(\ti f,\tphi)\in\ti{\Diff}^1_\omega(\D)$. The number
$$\ti{\Cal}_2(\ti f)=\int_{\D^2\backslash\Delta} \Ang_{\ti f}(x,y)\omega(x)\omega(y),$$
defines a morphism $\ti{\Cal}_2:\ti{\Diff}^1_\omega(\D)\ra\R$ and induces a morphism on $\Diff^1_\omega(\D)$ defined by
$$\Cal_2(f)=\ti{\Cal}_2(\ti f)+\Z,$$
where $\ti f\in\ti\Diff^1_\omega(\D)$ is a lift of $f$.
\end{theorem}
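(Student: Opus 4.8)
The plan is to prove four things in turn: that the integral defining $\ti{\Cal}_2$ converges, that $\ti{\Cal}_2$ is a group morphism, that it descends modulo $\Z$ to a well-defined map on $\Diff^1_\omega(\D)$, and that this descended map is a morphism into $\Tt^1$. For convergence, I would invoke the discussion preceding the statement: $\Ang_{\ti f}$ does not depend on the isotopy chosen in the class $[I]$, and the extension argument on the compact set $K$ shows that $\Ang_{\ti f}$ is bounded on $\D^2\backslash\Delta$. Since $\omega$ induces a probability measure, $\omega\times\omega$ is finite on $\D^2$ and the diagonal $\Delta$ is $(\omega\times\omega)$-negligible, so the bounded function $\Ang_{\ti f}$ is integrable and $\ti{\Cal}_2(\ti f)\in\R$ is a genuine real number.

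The heart of the argument is a cocycle identity. Given $\ti f=(f,[I])$ and $\ti g=(g,[J])$ with $I=(f_t)_{\tin}$ and $J=(g_t)_{\tin}$, the product $\ti f\,\ti g$ is represented by the concatenated isotopy $K$ that follows $J$ on $[0,1/2]$ and then $t\mapsto f_t\circ g$ on $[1/2,1]$, which runs from $\id$ to $fg$. Since the total angle variation of the vector joining the two moving points is additive under concatenation of paths, I expect, for every $(x,y)\in\D^2\backslash\Delta$,
$$\Ang_{\ti f\,\ti g}(x,y)=\Ang_{\ti g}(x,y)+\Ang_{\ti f}(g(x),g(y)),$$
the first summand coming from the sub-isotopy $J$ and the second from $t\mapsto f_t\circ g$, whose associated vector joins $f_t(g(x))$ to $f_t(g(y))$.

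Integrating this identity over $\D^2\backslash\Delta$ gives $\ti{\Cal}_2(\ti f\,\ti g)=\ti{\Cal}_2(\ti g)+\int_{\D^2\backslash\Delta}\Ang_{\ti f}(g(x),g(y))\,\omega(x)\omega(y)$. Because $g$ preserves $\omega$, the map $(x,y)\mapsto(g(x),g(y))$ preserves $\omega\times\omega$ and sends $\D^2\backslash\Delta$ onto itself; the change of variables $x'=g(x),\,y'=g(y)$ turns the remaining integral into $\ti{\Cal}_2(\ti f)$, so $\ti{\Cal}_2(\ti f\,\ti g)=\ti{\Cal}_2(\ti f)+\ti{\Cal}_2(\ti g)$. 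For the descent, two lifts of $f|_{\Ss^1}$ differ by an integer $k$, and by the computation in the proof of Lemma \ref{choiceI} replacing $\tphi$ by $\tphi+k$ replaces $\Ang_{\ti f}$ by $\Ang_{\ti f}+k$; hence $\ti{\Cal}_2$ increases by $k\int_{\D^2\backslash\Delta}\omega(x)\omega(y)=k$, using that $\omega$ is a probability measure. Thus $\ti{\Cal}_2(\ti f)\bmod\Z$ depends only on $f$, so $\Cal_2(f)=\ti{\Cal}_2(\ti f)+\Z$ is well-defined, and as the composite of the morphism $\ti{\Cal}_2$ with the projection $\R\ra\Tt^1$ it is itself a morphism.

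The main obstacle is the cocycle identity: I must set up the concatenated isotopy compatibly with the group law on $\ti{\Diff}^1_\omega(\D)$ and carefully justify the additivity of the angle variation, including across the diagonal where only the continuous bounded extension on $K$ is available, so that the change of variables — legitimate precisely because $g$ is area-preserving — can be applied to the integral of the extended function.
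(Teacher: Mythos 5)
Your proposal is correct and takes essentially the same approach as the paper: the same cocycle identity $\Ang_{\ti f\circ\ti g}(x,y)=\Ang_{\ti g}(x,y)+\Ang_{\ti f}(g(x),g(y))$ obtained from the concatenated isotopy, integrated and combined with the change of variables $(x,y)\mapsto(g(x),g(y))$ legitimated by $g$ preserving $\omega$, and the descent modulo $\Z$ via Lemma \ref{choiceI}. You merely make explicit some details the paper leaves implicit (boundedness and integrability of $\Ang_{\ti f}$, and the shift of the integral by exactly $k$ under a change of lift because $\omega$ is a probability measure), and your worry about the diagonal is unnecessary since the identity is only integrated over $\D^2\backslash\Delta$.
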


\begin{proof}
First, $\ti\Cal$ is well-defined since the angle function $\Ang_{\ti f}$ is integrable on $\D^2\backslash\Delta$.\\

Let us consider $\ti f=(f,\tphi)$ and $\ti g=(g,\tphi')$ two elements of $\ti{\Diff}^1_\omega(\D)$ and two isotopies $I=(f_t)_{\tin}\in[I]$ from $\id$ to $f$ associated to $\tphi$ and $I'=(g_t)_{\tin}$ from $\id$ to $g$ associated to $\tphi'$. We consider the concatenation $I\cdot I'$ of the isotopy $I$ and $I'$ which gives an isotopy from $\id$ to $f\circ g$ associated to $\tphi\circ\tphi'$ and we define the element $\ti f\circ\ti g=(f\circ g,\tphi\circ\tphi')\in \ti{\Diff}^1_\omega(\D)$. For each $(x,y)\in\D^2\backslash\Delta$ we have
$$\Ang_{I\cdot I'}(x,y)=\Ang_{I'}(x,y)+\Ang_{I}(g(x),g(y)).$$
Hence we obtain
$$\Ang_{\ti f\circ\ti g}(x,y)=\Ang_{\ti g}(x,y)+\Ang_{\ti f}(g(x),g(y)).$$
We integrate the previous equality and since $g$ preserves $\omega$ we deduce that $\ti{\Cal}_2$ is a morphism from $\ti{\Diff}^1_\omega(\D)$ to $\R$.\\

Moreover, Lemma \ref{choiceI} assures that $\ti{\Cal}_2$ induces the morphism $\Cal_2$ from $\Diff^1_\omega(\D)$ to $\Tt^1$.
\end{proof}

Notice that the morphisms $\ti\Cal_2$ and $\Cal_2$ satisfy the following commutative diagram 
$$\xymatrix{
    \ti{\Diff}^1_\omega(\D) \ar[r] \ar[d]_{\ti{\Cal}_2}  & \Diff^1_\omega(\D) \ar[d]^{\Cal_2}  \\
    \R \ar[r]  & \Tt^1 \\
  }$$ 
where the horizontal arrows are the covering maps.\\

This interpretation allows us to generalize the definition to other invariant measures of the disk. Let us consider $\ti f=(f,\tphi)\in\ti\Diff^1(\D)$ and an isotopy $I$ from $\id$ to $f$ associated to $\ti\phi$. We consider a probability measure $\mu$ on $\D$ without atom which is $f$-invariant. We define the number $\ti{\mathcal{C}}_\mu(I)$ by 
$$\widetilde{\mathcal{C}}_\mu(\ti f)=\int\int_{\D^2\backslash\Delta} \Ang_{\ti f}(x,y)d\mu(x) d\mu(y).$$

By Lemma \ref{choiceI} we obtain the following corollary.
\begin{coro}
Let us consider $\ti f=(f,\tphi)\in\ti\Diff^1_\omega(\D)$. For every $(x,y)\in\D^2\backslash\Delta$ the number $\ti{\mathcal{C}}_\mu(\ti f)-\ti{\rho}(\ti\phi)$ only depends on $f$.
\end{coro}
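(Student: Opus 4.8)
The plan is to show that $\ti{\mathcal{C}}_\mu(\ti f)-\ti{\rho}(\tphi)$ is insensitive to the choice of the homotopy class $[I]$ (equivalently, of the lift $\tphi$ of $f|_{\Ss^1}$), so that it is attached to $f$ alone. The only real input is the pointwise statement of Lemma \ref{choiceI}: for every $(x,y)\in\D^2\backslash\Delta$ the number $\Ang_{\ti f}(x,y)-\ti{\rho}(\tphi)$ depends only on $f$. I would therefore introduce the auxiliary function $g(x,y)=\Ang_{\ti f}(x,y)-\ti{\rho}(\tphi)$ on $\D^2\backslash\Delta$; it is bounded, because $\Ang_{\ti f}$ extends to a bounded function on $\D^2$, and by Lemma \ref{choiceI} it does not depend on the chosen lift.

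Next I would integrate $g$ against the product measure $\mu\otimes\mu$ and split by linearity:
$$\int_{\D^2\backslash\Delta} g(x,y)\,d\mu(x)d\mu(y)=\ti{\mathcal{C}}_\mu(\ti f)-\ti{\rho}(\tphi)\,(\mu\otimes\mu)(\D^2\backslash\Delta).$$
The crucial observation is that the weight attached to the constant $\ti{\rho}(\tphi)$ is exactly $1$. This is where the no-atom hypothesis on $\mu$ enters: by Fubini's theorem $(\mu\otimes\mu)(\Delta)=\int_\D \mu(\{x\})\,d\mu(x)=0$, since $\mu(\{x\})=0$ for every $x$, and hence $(\mu\otimes\mu)(\D^2\backslash\Delta)=1$. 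Consequently the displayed identity reads
$$\int_{\D^2\backslash\Delta} g(x,y)\,d\mu(x)d\mu(y)=\ti{\mathcal{C}}_\mu(\ti f)-\ti{\rho}(\tphi).$$
Since the left-hand side is the integral of a function that depends only on $f$ (and on the fixed measure $\mu$), the right-hand side depends only on $f$ as well, which is the assertion.

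The step that carries all the content is the vanishing of the diagonal, $(\mu\otimes\mu)(\Delta)=0$. Without the no-atom assumption this would fail: if $\mu$ charged a point, then between two lifts $\tphi$ and $\tphi'=\tphi+k$ the quantity $\ti{\mathcal{C}}_\mu$ would jump by $k\,(\mu\otimes\mu)(\D^2\backslash\Delta)$, a value strictly smaller than $k$, whereas $\ti{\rho}(\tphi)$ jumps by exactly $k$, so the difference $\ti{\mathcal{C}}_\mu(\ti f)-\ti{\rho}(\tphi)$ would no longer be lift-independent. I expect no further difficulty: integrability is automatic from the boundedness of $\Ang_{\ti f}$ together with the finiteness of $\mu$, and everything else is the linearity used above combined with Lemma \ref{choiceI}.
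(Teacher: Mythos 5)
Your proof is correct and follows essentially the same route as the paper, which simply integrates the pointwise statement of Lemma \ref{choiceI} against $\mu\otimes\mu$; the paper in fact states the corollary with no written proof beyond citing that lemma. Your explicit verification that $(\mu\otimes\mu)(\Delta)=0$ via the no-atom hypothesis, so that the constant $\ti{\rho}(\tphi)$ carries weight exactly $1$, is precisely the detail the paper leaves implicit, and your remark that the conclusion would fail for atomic $\mu$ is a correct observation.
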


Birkhoff ergodic theorem gives another way to compute $\ti{\mathcal{C}}_\mu(\ti f)$ for $\ti f=(f,\tphi)\in\ti \Diff^1(\D)$. Let us consider an isotopy $I=(f_t)_{\tin}$ from $\id$ to $f$ associated to $\tphi$. For $(x,y)\in\D\times\D\backslash\Delta$ we have 
\begin{equation}
\Ang_{I^n}(x,y)=\Ang_I(x,y)+\Ang_I(f(x),f(y))+...+\Ang_I(f^{n-1}(x),f^{n-1}(x)).
\end{equation}
The function $\Ang_I$ is bounded so the function 
$$\widehat{\Ang}_I(x,y)=\lim_{n\ra\infty}\frac{1}{n}\Ang_{I^n}(x,y),$$
is defined $\mu\times\mu$ almost everywhere and depends only on the homotopy class of $I$. Hence we can define $\widehat\Ang_{\ti f}=\widehat\Ang_I$ . Thus we obtain the following equality
\begin{equation}\label{Cmu}
\ti{\mathcal{C}}_\mu(\ti f)=\int\int_{\D \times\D} \widehat{\Ang}_{\ti f}(x,y)d\mu(x) d\mu(y).
\end{equation}

We state the proposition of topological invariance, see \cite{GHYS}.

\begin{prop}\label{conjugaison}
Let us consider two probability measures $\mu_1$ and $\mu_2$ of $\D$ without atom and two compactly supported elements of $\Diff_{\mu_1}^1(\D)$ and $ \Diff_{\mu_2}^1(\D)$ denoted $\phi_1$ and $\phi_2$ such that there exists a homeomorphism $h\in \Diff_+^0(\D)$ satisfying $\phi_2=h\circ\phi_1\circ h^{-1}$ and $h_*(\mu_1)=\mu_2$. We have that
$$\mathcal{C}_{\mu_1}(\phi_1)=\mathcal{C}_{\mu_2}(\phi_2).$$
\end{prop}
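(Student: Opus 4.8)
The plan is to reduce the statement to the conjugacy invariance of the \emph{asymptotic} angle function and then change variables. For a compactly supported $\phi\in\Diff^1_\mu(\D)$ equipped with its canonical (compactly supported) isotopy $I$, equation \ref{Cmu} reads
\[
\mathcal{C}_\mu(\phi)=\int\int_{\D\times\D}\widehat{\Ang}_{\phi}(x,y)\,d\mu(x)\,d\mu(y),
\]
where $\widehat{\Ang}_{\phi}=\lim_n\frac1n\Ang_{I^n}$ is the Birkhoff average of the bounded function $\Ang_I$ under $\phi\times\phi$ (using the cocycle relation $\Ang_{I^n}(x,y)=\sum_{k=0}^{n-1}\Ang_I(\phi^k x,\phi^k y)$). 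Since $\mu_1$ and $\mu_2$ are atomless, the diagonal is $\mu_i\times\mu_i$-null and the orbit of a generic pair never meets $\Delta$, so these averages exist almost everywhere. It then suffices to establish the pointwise identity $\widehat{\Ang}_{\phi_2}(h(x),h(y))=\widehat{\Ang}_{\phi_1}(x,y)$ for $\mu_1\times\mu_1$-a.e.\ $(x,y)$, because $(h\times h)_*(\mu_1\times\mu_1)=\mu_2\times\mu_2$ lets me substitute $x'=h(x),\,y'=h(y)$ and transform the integral defining $\mathcal{C}_{\mu_2}(\phi_2)$ directly into the one defining $\mathcal{C}_{\mu_1}(\phi_1)$.

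To prove the invariance identity I would first fix the isotopies. As $\phi_1$ is compactly supported in $\mathring\D$, its canonical isotopy $I_1$ is the identity near $\Ss^1$; conjugating by the orientation-preserving homeomorphism $h$ produces the topological isotopy $hI_1h^{-1}$ from $\id$ to $\phi_2$, which is again the identity near $\Ss^1$ (its support lies in the compact set $h(\mathrm{supp}\,\phi_1)\subset\mathring\D$). Hence $hI_1h^{-1}$ has vanishing rotation number on $\Ss^1$ and therefore lies in the same homotopy class as the canonical isotopy $I_2$ of $\phi_2$; since the winding integral defining $\Ang$ depends only on the homotopy class of the isotopy, this gives $\widehat{\Ang}_{\phi_2}=\widehat{\Ang}_{hI_1h^{-1}}$ off the diagonal, so it remains to compare $\widehat{\Ang}_{hI_1h^{-1}}(h(x),h(y))$ with $\widehat{\Ang}_{I_1}(x,y)$.

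The comparison is where the topology of $\D^2\setminus\Delta$ enters. The direction map $H:(a,b)\mapsto (b-a)/\|b-a\|$ induces an isomorphism $\pi_1(\D^2\setminus\Delta)\xrightarrow{\ \sim\ }\pi_1(\Ss^1)=\Z$, and since $h$ preserves orientation the homeomorphism $h\times h$ of $\D^2\setminus\Delta$ acts as the identity on this $\pi_1$; consequently $H$ and $H\circ(h\times h)$ induce the same map to $\Ss^1=K(\Z,1)$ and are homotopic. This yields a continuous single-valued $u:\D^2\setminus\Delta\to\R$ recording the angle between the two directions. Tracking this correction along the concatenated isotopy $I_1^n$, whose endpoints are $(x,y)$ and $(\phi_1^n(x),\phi_1^n(y))$, produces the telescoping relation
\[
\Ang_{hI_1^nh^{-1}}(h(x),h(y))-\Ang_{I_1^n}(x,y)=u(\phi_1^n(x),\phi_1^n(y))-u(x,y),
\]
and dividing by $n$ gives the desired identity as soon as $\frac1n\,u(\phi_1^n(x),\phi_1^n(y))\to 0$ almost everywhere.

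The main obstacle is exactly this last step: controlling $u$ near the diagonal, where a merely continuous $h$ can distort directions badly so that $u$ need not be bounded. I would handle it by showing $u\in L^1(\mu_1\times\mu_1)$ (equivalently that the one-step correction $u\circ(\phi_1\times\phi_1)-u$ is integrable, being the difference of the bounded $\Ang_{I_1}$ and the conjugated angle function), and then invoking the standard fact that $\frac1n\,g\circ T^n\to 0$ almost everywhere for any $g\in L^1$ and any measure-preserving $T=\phi_1\times\phi_1$ (a Borel--Cantelli estimate on the sets $\{|g|>\varepsilon n\}$). The atomlessness of $\mu_1$ makes the diagonal negligible, so this almost-everywhere convergence is all that is needed, and integrating the pointwise identity then completes the argument.
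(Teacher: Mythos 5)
The paper itself does not prove this proposition; it is stated with a pointer to Gambaudo--Ghys (\cite{GHYS}), and your architecture --- reduction to the asymptotic angle $\widehat{\Ang}$ via equation \ref{Cmu}, the direction map inducing an isomorphism $\pi_1(\D^2\backslash\Delta)\cong\Z$ on which the orientation-preserving $h\times h$ acts trivially, the resulting continuous correction $u$ and the telescoping coboundary relation --- is exactly the route of the cited reference. Up to the last step, everything is sound: the diagonal is $\mu_i\times\mu_i$-null because the measures are atomless, $hI_1h^{-1}$ and $I_2$ are homotopic since both restrict to the constant isotopy on $\Ss^1$, and the telescoping identity is correct.

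The genuine gap is in your final step. You propose to show $u\in L^1(\mu_1\times\mu_1)$ and assert parenthetically that this is \emph{equivalent} to integrability of the one-step correction $u\circ(\phi_1\times\phi_1)-u$. That equivalence is false: a measurable transfer function solving a cohomological equation with bounded (hence integrable) coboundary need not itself be integrable, and for a merely continuous conjugacy $h$ there is no reason at all to expect $u\in L^1$ --- $u$ is continuous off the diagonal but can blow up near it, which is precisely the obstacle you identified. Note also that the one-step correction \emph{is} bounded for free, since by the homotopy $hI_1h^{-1}\simeq I_2$ it equals $\Ang_{I_2}\circ(h\times h)-\Ang_{I_1}$, and both angle functions are bounded because $\phi_1$ and $\phi_2$ are $C^1$; so your Borel--Cantelli route, which needs $\sum_n \mu(\{|u|>\varepsilon n\})<\infty$, i.e.\ essentially $u\in L^1$, is the only broken link. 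The standard repair avoids $L^1$ entirely: since $u$ is measurable and finite off a null set, and $T=\phi_1\times\phi_1$ preserves the finite measure $\mu_1\times\mu_1$, one has $\mu_1\times\mu_1(\{|u\circ T^n|>\varepsilon n\})=\mu_1\times\mu_1(\{|u|>\varepsilon n\})\to 0$, so $\frac1n\,u\circ T^n\to 0$ \emph{in measure}. On the other hand both Birkhoff averages $\frac1n\Ang_{I_1^n}$ and $\frac1n\Ang_{I_2^n}\circ(h\times h)$ converge almost everywhere (bounded cocycles, Birkhoff), so $\frac1n\bigl(u\circ T^n-u\bigr)$ converges almost everywhere; a sequence converging a.e.\ and to $0$ in measure has a.e.\ limit $0$. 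This yields $\widehat{\Ang}_{I_2}(h(x),h(y))=\widehat{\Ang}_{I_1}(x,y)$ almost everywhere, and your change of variables $(h\times h)_*(\mu_1\times\mu_1)=\mu_2\times\mu_2$ then finishes the proof exactly as you planned.
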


For a probability measure $\mu$ of the disk, there is the equivalent result to extend the invariant $\mathcal{C}_\mu$.

\begin{theorem}
Let us consider an element $\ti f\in\ti{\Diff}^1_\mu(\D)$. The number
$$\ti{\mathcal{C}}_\mu(\ti f)=\int_{\D^2\backslash\Delta} \Ang_{\ti f}(x,y)d\mu(x)d\mu(y),$$
defines a morphism $\ti{\mathcal{C}}_\mu:\ti{\Diff}^1_\mu(\D)\ra\R$ which induces a morphism $\mathcal{C}_\mu:\Diff^1_\mu(\D)\ra \Tt^1$ defined for every $f\in\Diff^1_\mu(\D)$ by
$$\mathcal{C}_\mu(f)=\ti{\mathcal{C}}_\mu(\ti f)+\Z,$$
where $\ti f\in\ti\Diff^1_\mu(\D)$ is a lift of $f$.
\end{theorem}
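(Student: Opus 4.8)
The plan is to mirror the proof of Theorem \ref{Cal2} almost verbatim, replacing the symplectic measure $\omega$ by the general atomless $f$-invariant probability measure $\mu$ fixed above, and to isolate the two places where the hypotheses on $\mu$ are actually used: atomlessness (for integrability) and the probability normalization (for the shift by $\Z$).

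First I would check that $\ti{\mathcal{C}}_\mu$ is well defined. Since $f$ is of class $C^1$, the angle function $\Ang_{\ti f}$ extends to a bounded continuous function on the compact set $K$ of triples $(x,y,d)$ introduced above, hence it is a bounded measurable function on $\D^2$. Because $\mu$ has no atom, the product measure $\mu\times\mu$ assigns zero mass to the diagonal $\Delta$, so the integral over $\D^2\backslash\Delta$ is finite and coincides with the integral of the bounded function $\Ang_{\ti f}$ over all of $\D^2$. Moreover $\Ang_{\ti f}$ depends only on the homotopy class $\tphi$ and not on the chosen representative isotopy, so the quantity is unambiguous.

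Next I would establish that $\ti{\mathcal{C}}_\mu$ is a morphism. Given $\ti f=(f,\tphi)$ and $\ti g=(g,\tphi')$ with isotopies $I$ and $I'$ representing them, the concatenation $I\cdot I'$ represents $\ti f\circ\ti g=(f\circ g,\tphi\circ\tphi')$ and satisfies the cocycle identity
$$\Ang_{\ti f\circ\ti g}(x,y)=\Ang_{\ti g}(x,y)+\Ang_{\ti f}(g(x),g(y)),$$
exactly as in the proof of Theorem \ref{Cal2}. Integrating against $\mu\times\mu$, the first term yields $\ti{\mathcal{C}}_\mu(\ti g)$ directly. For the second term the key point is that $g\in\Diff^1_\mu(\D)$ preserves $\mu$, so $(g,g)$ preserves $\mu\times\mu$, and the change of variables gives
$$\int_{\D^2}\Ang_{\ti f}(g(x),g(y))\,d\mu(x)\,d\mu(y)=\ti{\mathcal{C}}_\mu(\ti f).$$
This is the only step where the $\mu$-invariance of $g$ is used, and it plays the role of "$g$ preserves $\omega$" in the original argument; we conclude $\ti{\mathcal{C}}_\mu(\ti f\circ\ti g)=\ti{\mathcal{C}}_\mu(\ti f)+\ti{\mathcal{C}}_\mu(\ti g)$.

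Finally I would pass to the quotient. Changing the lift of $f$ replaces $\tphi$ by $R_{2\pi}^k\tphi$ for some $k\in\Z$, which, as in Lemma \ref{choiceI}, changes $\Ang_{\ti f}$ by the constant $k$. Since $\mu$ is a probability measure, $\int_{\D^2}k\,d\mu(x)\,d\mu(y)=k$, so $\ti{\mathcal{C}}_\mu$ changes exactly by the integer $k$; hence $\mathcal{C}_\mu(f)=\ti{\mathcal{C}}_\mu(\ti f)+\Z$ is a well-defined element of $\Tt^1$ and the morphism property descends from $\ti{\mathcal{C}}_\mu$ to $\mathcal{C}_\mu$. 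I do not expect a genuine obstacle here: the content is identical to Theorem \ref{Cal2}, and the only subtle points are that atomlessness of $\mu$ is indispensable to make $\Delta$ negligible (securing integrability), while the normalization $\mu(\D)=1$ is precisely what forces the ambiguity in the lift to be an integer.
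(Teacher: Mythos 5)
Your proof is correct and takes essentially the same route as the paper, which itself only remarks that the proof is ``basically the same as Theorem \ref{Cal2}'': the cocycle identity $\Ang_{\ti f\circ\ti g}(x,y)=\Ang_{\ti g}(x,y)+\Ang_{\ti f}(g(x),g(y))$ integrated against $\mu\times\mu$ using $\mu$-invariance of $g$, plus Lemma \ref{choiceI} and $\mu(\D)=1$ to descend to $\Tt^1$. One minor quibble: since $\Ang_{\ti f}$ extends to a bounded function, the integral over $\D^2\backslash\Delta$ is finite regardless, so atomlessness of $\mu$ (a standing hypothesis of the section) is not actually what secures integrability, contrary to your closing claim that it is indispensable.
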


The proof of the previous theorem is basically the same as Theorem \ref{Cal2} and if we consider the Lesbegue measure $\Leb$ then we have 
$$\ti{\mathcal{C}}_\Leb=\ti{\Cal}_2.$$

We have the following computation in the case of the rotations.
\begin{lem}\label{rotation}
For $\theta\in\R$ we consider $\ti R_\theta=(R_\theta,\ti{r})\in\ti\Diff^1_\omega(\D)$ where $R_\theta$ is the rotation $\D\ra\D$ of angle $\theta$. We have
$$\ti{\Cal}_2(\ti R)=\ti{\rho}(\ti{r}).$$
\end{lem}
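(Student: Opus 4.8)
The plan is to compute the angle function $\Ang_{\ti R_\theta}$ pointwise using the obvious rotational isotopy, and then to integrate a constant. First I would take the isotopy $I_0=(R_{t\theta})_{\tin}$ from $\id$ to $R_\theta$ and observe that it is associated to the natural lift $\ti r_0:x\mapsto x+\tfrac{\theta}{2\pi}$ of $R_\theta|_{\Ss^1}$, for which $\ti\rho(\ti r_0)=\tfrac{\theta}{2\pi}$. The key point is that each $R_{t\theta}$ is linear, so for $(x,y)\in\D^2\backslash\Delta$ the path $\gamma:t\mapsto R_{t\theta}(x)-R_{t\theta}(y)=R_{t\theta}(x-y)$ is just the fixed vector $x-y$ rotated uniformly. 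Writing $x-y=r\e^{i\alpha}$ in polar form gives $\gamma(t)=r\e^{i(\alpha+t\theta)}$, whose polar angle increases by exactly $\theta$ between $t=0$ and $t=1$. Hence, by the definition in equation \ref{Angpolar},
$$\Ang_{I_0}(x,y)=\frac{1}{2\pi}\int_\gamma d\theta=\frac{\theta}{2\pi}$$
for every $(x,y)\in\D^2\backslash\Delta$; in particular $\Ang_{\ti R_\theta}$ is the constant function $\tfrac{\theta}{2\pi}=\ti\rho(\ti r_0)$.

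Next I would integrate this constant. Since $\omega$ is normalized so that it induces the Lebesgue probability measure on $\D$, we have $\int_\D\omega=1$, and the diagonal $\Delta$ is $\omega\otimes\omega$-negligible in $\D^2$, so
$$\ti{\Cal}_2\big((R_\theta,\ti r_0)\big)=\int_{\D^2\backslash\Delta}\frac{\theta}{2\pi}\,\omega(x)\omega(y)=\frac{\theta}{2\pi}\left(\int_\D\omega\right)^2=\frac{\theta}{2\pi}=\ti\rho(\ti r_0).$$
This already proves the lemma for the distinguished lift $\ti r_0$.

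Finally I would pass to an arbitrary lift $\ti r$ of $R_\theta|_{\Ss^1}$. Any such lift has the form $\ti r=\ti r_0+k$ for some $k\in\Z$, and the corresponding isotopy is homotopic to $R_{2\pi}^k I_0$. Using the relation $\Ang_{R_{2\pi}^k I_0}=\Ang_{I_0}+k$ (as in the proof of Lemma \ref{choiceI}), the angle function for $(R_\theta,\ti r)$ is the constant $\tfrac{\theta}{2\pi}+k$, and integrating as above gives $\ti{\Cal}_2((R_\theta,\ti r))=\tfrac{\theta}{2\pi}+k$. Since $\ti\rho(\ti r)=\ti\rho(\ti r_0)+k=\tfrac{\theta}{2\pi}+k$ as well, the two agree and $\ti{\Cal}_2(\ti R_\theta)=\ti\rho(\ti r)$ for every lift. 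The computation is essentially routine; the only points needing care are matching the prescribed lift $\ti r$ to the correct isotopy (handled by the integer shift above) and the normalization $\int_\D\omega=1$, which is what makes the constant integrate to itself rather than to the Euclidean area $\pi$.
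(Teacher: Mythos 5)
Your proposal is correct and follows essentially the same route as the paper: compute the angle function along the rotational isotopy, observe it is the constant $\tfrac{\theta}{2\pi}$, and integrate against the normalized form $\omega$. You are in fact slightly more careful than the paper's own proof, which suppresses the $\tfrac{1}{2\pi}$ normalization and does not spell out the integer-shift argument for an arbitrary lift $\ti r=\ti r_0+k$, both of which you handle explicitly and correctly.
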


\begin{proof}
Let us consider $R=(R_t)_{\tin}$ the isotopy from $\id$ to $R_\theta$ given in section \ref{preliminaries}. For a couple $(x,y)\in\D\times\D\backslash \Delta$ we consider the complex $z=x-y$ and we have for each $\tin$ $R_t(z)=z\e^{it\theta}$ and we can compute $\Ang_R(x,y)=\theta$. By integration on $\D\times\D\backslash \Delta$ we obtain 
$$\ti\Cal_2(\ti R_\theta)=\theta=\ti\rho(\ti r).$$
\end{proof}

				\subsection{Hamiltonian function}

In this section, the goal is to state the construction of the Calabi invariant given by equation \ref{Calintro} in the case of compactly supported diffeomorphisms of the disk. This construction leads to Theorem \ref{thm3} and we explain the definition of $\ti{\Cal}_3$ given by this theorem but we refer to the next section for the proofs of certain results.

Let us consider $f\in\Diff^1_\omega(\D)$ and a Hamiltonian isotopy $I=(f_t)_{\tin}$ from $\id$ to $f$. We consider the Hamiltonian function $(H_t)_{t\in\R}$ which induces the isotopy $I$. We denote $(X_t)_{t\in\R}$ the associated vector field. We have that for every $t\in\R$, $X_t$ is tangent to $\Ss^1$. So each $H_t$ is constant on $\Ss^1$ and we can consider $(H_t)_{t\in\R}$ the associated Hamiltonian function such that$$H_t|_{\Ss^1}=0.$$

We have the following lemma.
\begin{lem}\label{hamchoice}
The integral
$$2\int_{z\in\D} \int_0^1H_t(z)\omega(z)dt-\ti\rho(I|_{\Ss^1}),$$
depends only on $f$.
\end{lem}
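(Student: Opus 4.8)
The plan is to compare the integral for two isotopies from $\id$ to $f$ and to show that its variation is exactly compensated by that of $\ti\rho(I|_{\Ss^1})$. Recall that any two isotopies $I,I'$ from $\id$ to $f$ satisfy $I'\simeq R^kI$ for some $k\in\Z$, where $R=(R_t)_{\tin}$ is the loop $R_t(z)=z\e^{2\pi it}$, and that then $\ti\rho(I'|_{\Ss^1})=\ti\rho(I|_{\Ss^1})+k$. It therefore suffices to prove two facts: \textbf{(i)} the number $\int_\D\int_0^1H_t\,\omega\,dt$ depends only on the homotopy class of $I$ rel endpoints; \textbf{(ii)} passing from $I$ to $RI$ increases it by exactly $\tfrac12$. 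Granting these, $I\mapsto R^kI$ changes $2\int_\D\int_0^1H_t\,\omega\,dt$ by $k$ and changes $\ti\rho(I|_{\Ss^1})$ by $k$, so their difference is unchanged; this is exactly where the coefficient $2$ is needed.

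For \textbf{(i)} I would run the standard two-parameter computation. Let $(f_{s,t})_{s,t\in[0,1]}$ be a homotopy with $f_{s,0}=\id$ and $f_{s,1}=f$, and let $X_{s,t},Y_{s,t}$ be the vector fields generating the flows in $t$ and in $s$; both are tangent to $\Ss^1$, so they admit normalized Hamiltonians $H_{s,t},K_{s,t}$ vanishing on $\Ss^1$. Equality of the mixed partials of $f_{s,t}$ gives $\partial_sX-\partial_tY=[X,Y]$, which on the level of the normalized Hamiltonians becomes $\partial_sH_{s,t}-\partial_tK_{s,t}=\pm\{H_{s,t},K_{s,t}\}$, the \emph{a priori} constant being killed by evaluation on $\Ss^1$, where $H$, $K$ and $\omega(X_H,X_K)$ all vanish. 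Differentiating $G(s)=\int_\D\int_0^1H_{s,t}\,\omega\,dt$ and substituting yields
$$G'(s)=\int_0^1\!\!\int_\D\partial_tK_{s,t}\,\omega\,dt\;\pm\;\int_0^1\!\!\int_\D\{H_{s,t},K_{s,t}\}\,\omega\,dt.$$
The first integral equals $\int_\D(K_{s,1}-K_{s,0})\,\omega=0$, because $f_{s,0}$ and $f_{s,1}$ are independent of $s$, whence $Y_{s,0}=Y_{s,1}=0$ and so $K_{s,0}=K_{s,1}=0$. The second integral vanishes by Stokes, since $\{H,K\}\,\omega$ is, up to sign, the exact form $d(K\,dH)$, whose integral over $\D$ is $\int_{\Ss^1}K\,dH=0$ because $dH$ pulls back to $d(H|_{\Ss^1})=0$ on $\Ss^1$. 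Thus $G$ is constant along the homotopy.

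For \textbf{(ii)} I would compute the rotation explicitly. The loop $R$ is generated by the Hamiltonian $H^{\mathrm{rot}}=1-r^2$, normalized so that $H^{\mathrm{rot}}|_{\Ss^1}=0$, and a direct computation gives $\int_\D(1-r^2)\,\omega=\tfrac12$. Since $\int_\D\int_0^1H_t\,\omega\,dt$ is additive under concatenation of Hamiltonian isotopies (the generating Hamiltonian of a composition satisfies $\int_\D(H\#K)_t\,\omega=\int_\D H_t\,\omega+\int_\D K_t\,\omega$ because the flow preserves $\omega$), concatenating with $R$ adds $\tfrac12$, and with $R^k$ adds $\tfrac k2$. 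Combined with $\ti\rho(R^kI|_{\Ss^1})=\ti\rho(I|_{\Ss^1})+k$ this gives
$$2\int_\D\!\int_0^1H'_t\,\omega\,dt-\ti\rho(I'|_{\Ss^1})=2\int_\D\!\int_0^1H_t\,\omega\,dt-\ti\rho(I|_{\Ss^1}),$$
which is the claim.

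The main obstacle is the careful handling of the boundary terms in step \textbf{(i)}: verifying that the generating vector fields are tangent to $\Ss^1$ so that the normalized Hamiltonians $K_{s,t}$ exist, that the constant in the flatness identity vanishes, and that the two boundary contributions — $\int_\D\{H,K\}\,\omega$ via Stokes, and $K_{s,0}=K_{s,1}=0$ via the fixed endpoints — both die precisely because of the normalization $H_t|_{\Ss^1}=0$. Once these boundary effects are controlled, steps \textbf{(i)} and \textbf{(ii)} are routine.
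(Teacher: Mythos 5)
Your proof is correct, but it follows a genuinely different route from the paper's. The paper never argues directly at the level of the Hamiltonian integral: its printed ``proof'' of Lemma \ref{hamchoice} is a forward reference to Theorem \ref{thmlink}, and the lemma is actually obtained as a byproduct of the identity $2\int_0^1\int_\D H_t\,\omega\,dt=\int_{\D^2\backslash\Delta}\Ang_I(x,y)\,\omega(x)\omega(y)$, proved by the complex-analytic computation with the Cauchy formula for smooth functions (following Shelukhin), combined with the facts that $\Ang_I$ depends only on the homotopy class of $I$ and that $\Ang_{R^kI}=\Ang_I+k$ while $\ti\rho(R^kI|_{\Ss^1})=\ti\rho(I|_{\Ss^1})+k$ (Lemma \ref{choiceI}); see the remark following the proposition on the equality of $\ti\Cal_2$ and $\ti\Cal_3$. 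You instead stay entirely inside the Hamiltonian picture: homotopy invariance of $\int_0^1\int_\D H_{s,t}\,\omega\,dt$ via the two-parameter flatness identity $\partial_sH-\partial_tK=\pm\{H,K\}$, with the a priori constant and both boundary contributions ($\int_\D\{H,K\}\,\omega=\pm\int_{\Ss^1}K\,dH$ by Stokes, and $K_{s,0}=K_{s,1}=0$ from the fixed endpoints) all killed by the normalization on $\Ss^1$; then the explicit generator $1-r^2$ of the loop $R$ (correct for the paper's conventions $dH_t=\omega(X_t,\cdot)$ and $\omega=\frac{r}{\pi}dr\wedge d\theta$, with $\int_\D(1-r^2)\,\omega=\tfrac12$) shows that $2\int_0^1\int_\D H_t\,\omega\,dt$ and $\ti\rho(I|_{\Ss^1})$ both shift by exactly $k$ under $I\mapsto R^kI$, which is precisely where the factor $2$ is accounted for. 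What each approach buys: yours gives a self-contained well-definedness proof for $\ti\Cal_3$ that does not presuppose $\ti\Cal_2$ or any integrability analysis, and it explains the coefficient $2$ structurally; its one cost is a regularity caveat, since the mixed-partials argument needs the two-parameter family to be differentiable in $s$ (more than the raw $C^1$ setting of the paper, fixable by smoothing), whereas the paper's route requires only a single $C^1$ isotopy and gets the lemma for free once the harder equality $\ti\Cal_2=\ti\Cal_3$ is established — at the price of making Lemma \ref{hamchoice} logically dependent on that whole computation. Incidentally, a draft fragment left in the source after the bibliography carries out exactly your step (ii) comparison of $I$ with $R^kI$, so your decomposition is evidently the one the author had in mind before rerouting through the angle function.
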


\begin{proof}
The result will be a corollary of Theorem \ref{thmlink}. 
\end{proof}

\begin{theorem}
Let us consider an element $\ti f=(f,\ti\phi)\in\ti{\Diff}^1_\omega(\D)$ and a Hamiltonian function $H:\Ss^1\times \D\ra\R$ of $f$ which induces the flow $(\phi_t)_{\tin}$ such that the lift of $\phi_1|_{\Ss^1}$ is equal to $\ti{\phi}$ and such that $H_t$ is equal to $0$ on $\Ss^1$ for every $t\in\R$. The number
$$\ti{\Cal}_3(\ti f)=\int_0^1\int_\D H_t(z)\omega(z)dt,$$
does not depend on the choice of $H$. Moreover the map $\ti{\Cal}_3: \ti{\Diff}^1_\omega(\D) \ra\R$ is a morphism and $\ti\Cal(\ti f)+\Z$ depends only on $f$. It induces a morphism 
$$\Cal_3(f)=\int_0^1\int_\D H_t(z)\omega(z)dt+\Z,$$
defined on $\Diff^1_\omega(\D)$.
\end{theorem}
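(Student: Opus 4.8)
The plan is to establish, in order, the three claims: independence of $\ti\Cal_3(\ti f)$ from the generating Hamiltonian, the morphism property, and the descent of $\ti\Cal_3 \bmod \Z$ to $\Diff^1_\omega(\D)$. Throughout write $\mathcal{A}(H)=\int_0^1\int_\D H_t\,\omega\,dt$. First I would record the two algebraic identities that drive everything. If $(\phi_t)$ and $(\psi_t)$ are the Hamiltonian flows of $(H_t)$ and $(K_t)$, then $\phi_t\circ\psi_t$ is generated by $H_t+K_t\circ\phi_t^{-1}$ and $\phi_t^{-1}$ by $-H_t\circ\phi_t$; since every $\phi_t$ preserves $\Ss^1$, these new Hamiltonians again vanish on $\Ss^1$, and since every $\phi_t$ preserves $\omega$, the change of variables $z\mapsto\phi_t(z)$ gives $\mathcal{A}(H_t+K_t\circ\phi_t^{-1})=\mathcal{A}(H)+\mathcal{A}(K)$ and $\mathcal{A}(-H_t\circ\phi_t)=-\mathcal{A}(H)$. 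These reduce the whole theorem to a single vanishing statement for loops.

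For independence of $H$, note that two admissible Hamiltonians $H,H'$ for the same $\ti f=(f,\tphi)$ generate isotopies $I,I'$ lying in the same homotopy class (this is what fixing $\tphi$ means), so the loop $\psi_t:=\phi'_t\circ\phi_t^{-1}$ based at $\id$ is null-homotopic; by the identities above it is generated by a Hamiltonian $G_t$ vanishing on $\Ss^1$ with $\mathcal{A}(G)=\mathcal{A}(H')-\mathcal{A}(H)$. So it suffices to show $\mathcal{A}(G)=0$. The hard part, and the technical core of the proof, is the claim that $\mathcal{A}$ is invariant under based homotopies of loops, from which the vanishing follows by contracting $\psi$ to the constant loop (where $\mathcal{A}=0$). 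To prove it I would take a smooth family of loops $(\psi_{s,t})$ at $\id$, let $G^s_t$ be the $t$-generating Hamiltonian (vanishing on $\Ss^1$) and $F^s_t$ the $s$-generating Hamiltonian normalized so that $F^s_t|_{\Ss^1}=0$, and use the flatness relation $\partial_sG^s_t-\partial_tF^s_t=\{G^s_t,F^s_t\}$ (up to the usual sign convention). Then $\tfrac{d}{ds}\mathcal{A}(G^s)=\int_0^1\int_\D(\partial_tF^s_t+\{G^s_t,F^s_t\})\,\omega\,dt$. The first term integrates in $t$ to $\int_\D(F^s_1-F^s_0)\,\omega$, which vanishes because the loops are based: $\partial_s\psi_{s,0}=\partial_s\psi_{s,1}=0$ forces the $s$-field to vanish at $t=0,1$, hence $F^s_0\equiv F^s_1\equiv0$ after normalization. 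For the second term I would use that in dimension two $\{G,F\}\,\omega=d(F\,dG)$, so Stokes gives $\int_\D\{G,F\}\,\omega=\int_{\Ss^1}F\,dG=0$ precisely because $F|_{\Ss^1}=0$. Thus $\mathcal{A}(G^s)$ is constant in $s$ and $\mathcal{A}(G)=0$. I want to stress that the boundary normalization $H_t|_{\Ss^1}=0$ is exactly what kills the Stokes boundary term; without it the argument fails, which is the whole reason that hypothesis appears.

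With well-definedness secured, the morphism property is immediate: for $\ti f=(f,\tphi)$ and $\ti g=(g,\tphi')$ generated by $H$ and $K$, the composed flow $\phi_t\circ\psi_t$ has time-one map $f\circ g$ and boundary lift $\tphi\circ\tphi'$ (the lift of a composition of boundary isotopies starting at $\id$ is the composition of the lifts), so it represents $\ti f\circ\ti g$, and $\ti\Cal_3(\ti f\circ\ti g)=\mathcal{A}(H_t+K_t\circ\phi_t^{-1})=\mathcal{A}(H)+\mathcal{A}(K)=\ti\Cal_3(\ti f)+\ti\Cal_3(\ti g)$.

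Finally, for the descent to $\Diff^1_\omega(\D)$, two lifts of the same $f$ differ by $\tphi\mapsto\tphi+k$, which amounts to modifying the isotopy by $k$ full rotations $R^k_{2\pi}$; by additivity this changes $\ti\Cal_3$ by $k\,\ti\Cal_3(\ti R_{2\pi})$, and a direct evaluation of the integral on the rotation loop shows that $\ti\Cal_3(\ti R_{2\pi})$ is an integer (the rotation number of the full turn), in agreement with Lemma \ref{rotation}. Hence $\ti\Cal_3\bmod\Z$ depends only on $f$ and defines the morphism $\Cal_3:\Diff^1_\omega(\D)\ra\Tt^1$. One genuine subtlety I would flag is regularity: a merely $C^1$ element of $\Diff^1_\omega(\D)$ need not be the time-one map of a smooth Hamiltonian flow, so to cover the whole group cleanly one may instead appeal to the identification $\ti\Cal_3=\ti\Cal_2$ of Theorem \ref{thmlink}, whose right-hand side is manifestly well-defined from the angle function and depends only on $\ti f$; this gives an alternative, shorter route to well-definedness at the cost of invoking the link theorem.
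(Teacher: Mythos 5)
Your proof is correct in substance but takes a genuinely different route from the paper's. The paper proves this theorem by reduction: it establishes the identity $\ti\Cal_2=\ti\Cal_3$ in Section \ref{Proof} via the complex-coordinate computation $\int_{\D^2\backslash\Delta}\Ang_I\,\omega\,\omega=2\int_0^1\int_\D H_t\,\omega\,dt$ (Cauchy formula for smooth functions plus the integrability Lemma \ref{Fubini}), and then simply inherits well-definedness, the morphism property, and the descent mod $\Z$ from the corresponding properties of $\ti\Cal_2$ (Theorem \ref{Cal2} and Lemma \ref{choiceI}); indeed Lemma \ref{hamchoice} is proved in the paper only as a corollary of Theorem \ref{thmlink}. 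You instead argue entirely on the Hamiltonian side: composition and inverse formulas for generating Hamiltonians, a two-parameter flatness identity $\partial_sG-\partial_tF=\{G,F\}$ for based homotopies of loops, and the two-dimensional Stokes identity $\int_\D\{G,F\}\,\omega=\pm\int_{\Ss^1}F\,dG=0$, killed exactly by the boundary normalization. This is self-contained and isolates the role of $H_t|_{\Ss^1}=0$ more transparently than the paper does; one point worth making explicit in your flatness step is that with \emph{both} $F$ and $G$ normalized to vanish on $\Ss^1$ the identity holds exactly and not merely up to a time-dependent constant, since all three terms vanish on $\Ss^1$ ($X_G$ being tangent to the boundary, $\{G,F\}=dF(X_G)$ vanishes there). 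What the paper's route buys is regularity: your argument needs smooth two-parameter families of Hamiltonian flows, which is delicate for merely $C^1$ elements of $\Diff^1_\omega(\D)$ --- a caveat you correctly flag, and which the paper sidesteps because $\ti\Cal_2$ is defined directly from the angle function at $C^1$ regularity.

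One concrete wrinkle in your descent step: evaluating the formula of the statement literally on the full turn, the normalized Hamiltonian is $H_t(z)=1-|z|^2$ and $\int_0^1\int_\D H_t\,\omega\,dt=\tfrac12$, which is \emph{not} an integer, so $\ti\Cal_3$ as printed would change by $k/2$ under a change of lift and would not descend mod $\Z$. The discrepancy is a factor-of-two inconsistency internal to the paper: equation \ref{Calintro} with $n=1$, Lemma \ref{hamchoice}, and the Section \ref{Proof} identity all carry the coefficient $2$ in front of $\int_0^1\int_\D H_t\,\omega\,dt$, and under that (operative) normalization the full rotation contributes exactly $1$, your additivity argument closes, and the change-of-lift bookkeeping matches Lemma \ref{choiceI}. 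So your assertion that the rotation loop yields an integer is right for the paper's consistent convention but false for the formula as stated; you should say explicitly which normalization you use --- your direct check in fact exposes the typo, whereas the paper's reduction to $\ti\Cal_2$ (where the shift by $k$ is manifest, since $\Ang_{\ti f}$ itself shifts by $k$ and $\omega\times\omega$ has total mass $1$) never has to confront it.
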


The proof comes from the equality between $\ti{\Cal}_2$ and $\ti{\Cal}_3$ which will be proven in the next section. Moreover, the definition of $\Cal_3$ comes from Lemma \ref{hamchoice} and we obtain the following commutative diagram where the horizontal arrows are the universal covering maps.
$$\xymatrix{
    \ti{\Diff}^1_\omega(\D) \ar[r] \ar[d]_{\ti{\Cal}_3}  & \Diff^1_\omega(\D) \ar[d]^{\Cal_3}  \\
    \R \ar[r]  & \Tt^1 \\
  }$$

			\section{Proof of Theorem \ref{thmlink}.}\label{Proof}

In this section, we prove Theorem \ref{thmlink}.

\begin{theorem}\label{equality}
The morphisms $\ti{\Cal}_2$ and $\ti{\Cal}_3$ are equal. For $\ti f=(f,\ti{\phi})\in \ti{\Diff}^1_\omega(\D)$ we have the following equality
$$\ti{\Cal}_2(\ti f)=\Cal_1(f)+\ti{\rho}(\ti{\phi}).$$
Moreover $\Cal_1$, $\ti{\Cal}_2$ and $\ti{\Cal}_3$ are continuous in the $C^1$ topology.
\end{theorem}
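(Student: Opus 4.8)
The plan is to route everything through the \emph{action function} and the \emph{boundary rotation number}, proving the two equalities $\ti\Cal_3(\ti f)=\Cal_1(f)+\ti\rho(\tphi)$ and $\ti\Cal_2(\ti f)=\Cal_1(f)+\ti\rho(\tphi)$ separately; the identity $\ti\Cal_2=\ti\Cal_3$ then drops out. Throughout I fix the Liouville form $\lambda=\frac{r^2}{2\pi}d\theta$, a Hamiltonian isotopy $I=(f_t)_{\tin}$ realizing $\ti f=(f,\tphi)$ with $H_t|_{\Ss^1}=0$, and I let $\overline{A}$ denote the primitive of $f^*\lambda-\lambda$ produced by formula \eqref{actionham}. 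Since $\overline{A}$ and the normalized action $A_f$ of Theorem \ref{thm1.0} differ by a constant, and that constant is exactly $\int_{\Ss^1}\overline{A}\,d\mu$, the argument reduces to computing the two numbers $\int_\D\overline{A}\,\omega$ and $\int_{\Ss^1}\overline{A}\,d\mu$.

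First I would treat the \emph{Hamiltonian side}. Applying Fubini to \eqref{actionham} and using that each $f_t$ preserves $\omega$ converts $\int_\D\overline{A}\,\omega$ into $\int_0^1\int_\D(\iota(X_t)\lambda+H_t)\,\omega\,dt$. With $\lambda=\frac{r^2}{2\pi}d\theta$ one computes $\iota(X_t)\lambda=-\tfrac12\,r\,\partial_r H_t$, and an integration by parts in the radial variable—whose boundary term vanishes precisely because $H_t|_{\Ss^1}=0$—gives $\int_\D\iota(X_t)\lambda\,\omega=\int_\D H_t\,\omega$. Hence $\int_\D\overline{A}\,\omega=2\int_0^1\int_\D H_t\,\omega\,dt$, the integral appearing in Lemma \ref{hamchoice}, which is $\ti\Cal_3(\ti f)$. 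Next, on $\Ss^1$ one has $H_t=0$ and $\iota(X_t)\lambda=\frac{1}{2\pi}\dot\theta$, so $\overline{A}|_{\Ss^1}$ equals the total angular displacement of the lifted boundary isotopy, i.e.\ the displacement function of $\tphi$; integrating against an invariant $\mu$ and invoking the Birkhoff formula from Section \ref{preliminaries} yields $\int_{\Ss^1}\overline{A}\,d\mu=\ti\rho(\tphi)$. Combining the two gives $\Cal_1(f)=\int_\D\overline{A}\,\omega-\ti\rho(\tphi)$, that is $\ti\Cal_3(\ti f)=\Cal_1(f)+\ti\rho(\tphi)$.

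The \emph{angle side} is the crux. I would establish the pointwise identity
$$\int_\D \Ang_{\ti f}(x,y)\,\omega(y)=\overline{A}(x)\qquad\text{for every }x\in\D,$$
by differentiating $\arg(f_t(x)-f_t(y))$ along the isotopy and integrating the result over $y$: the Hamiltonian structure of the flow collapses the $y$-integral into the $1$-form $f^*\lambda-\lambda$ evaluated at $x$, identifying the left-hand side with a primitive of $f^*\lambda-\lambda$, and evaluating on rotations pins the constant down to $\overline{A}$. This is the extension to the non-compactly-supported setting of the Fathi–Gambaudo–Ghys identity \eqref{Calangle}; the delicate feature is that, unlike the compactly supported case, the boundary now contributes, and that contribution is exactly what produces the $\ti\rho(\tphi)$ term. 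Integrating this identity over $x$ gives $\ti\Cal_2(\ti f)=\int_\D\overline{A}\,\omega=\Cal_1(f)+\ti\rho(\tphi)=\ti\Cal_3(\ti f)$, which is the whole of the first two assertions. I expect the main obstacle to be controlling $\Ang_{\ti f}$ near the diagonal while differentiating under the integral sign, which is why the continuous extension of $\Ang_{\ti f}$ to the compactification $K$ of $\D^2\setminus\Delta$ (built in the angle-function subsection) is required.

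Finally, for \emph{continuity} in the $C^1$ topology, the equalities just proved reduce everything to the continuity of $\Cal_1$ and of $\ti\rho$, since $\ti\Cal_2=\ti\Cal_3=\Cal_1+\ti\rho$. Continuity of $\ti\rho$ is classical. For $\Cal_1$, if $f_n\to f$ in $C^1$ then $f_n^*\lambda-\lambda\to f^*\lambda-\lambda$ uniformly, so the primitives $\overline{A}_n(z)=\int_{\gamma_z}(f_n^*\lambda-\lambda)$ converge uniformly to $\overline{A}$; since $\Cal_1(f_n)=\int_\D\overline{A}_n\,\omega-\ti\rho(\tphi_n)$ and both terms converge, continuity of $\Cal_1$ follows, and hence that of $\ti\Cal_2$, $\Cal_2$, $\ti\Cal_3$ and $\Cal_3$ (the induced maps to $\Tt^1$ being continuous because their $\R$-valued lifts are). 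The genuine difficulty thus lies entirely in the pointwise angle–action identity and the uniform estimate on $\Ang_{\ti f}$ near the diagonal; the Hamiltonian computation and the continuity statements are then bookkeeping.
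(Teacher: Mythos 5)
Your proposal is correct in outline but genuinely reorganizes the paper's argument. The paper proves $\ti{\Cal}_2=\ti{\Cal}_3$ \emph{first}, by a global Shelukhin-style computation integrating over both variables: it writes $\Ang_I$ as the imaginary part of $\frac{1}{2\pi}\int\frac{d(z_1-z_2)}{z_1-z_2}$, substitutes $\xi_t=-2i\pi\,\partial H_t/\partial\overline{z}$, justifies the exchange of integrals near the diagonal (Lemma \ref{Fubini}), and collapses the space integral with the Cauchy formula for smooth functions, the boundary term vanishing because $H_t|_{\Ss^1}=0$; it then proves $\ti{\Cal}_3(\ti f)=\Cal_1(f)+\ti\rho(\tphi)$ by exactly your ``Hamiltonian side'' computation, except that your polar integration by parts is replaced by the equivalent identity $0=i_{X_t}(\lambda\wedge\omega)=i_{X_t}(\lambda)\,\omega+d(H_t\lambda)-H_t\omega$ plus Stokes, and the constant is pinned via the displacement function and Birkhoff exactly as you do. (Note you quietly reuse formula \eqref{actionham} outside the compactly supported setting where the introduction states it; the paper re-derives it there via $d\dot A_t=d(H_t\circ f_t+\lambda(X_t)\circ f_t)$, which is the step that licenses this.) Your fiberwise identity $\int_\D\Ang_{\ti f}(x,y)\,\omega(y)=\overline{A}(x)$ is a true strengthening of what the paper proves, and it is provable by the paper's own tools with one variable frozen: the $t$-derivative of the $y$-integral equals $(\iota(X_t)\lambda+H_t)\circ f_t(x)$, using $\int_\D\frac{\omega(w)}{z-w}=\overline{z}$, the Cauchy formula with vanishing boundary term, and $\iota(X_t)\lambda=\frac{1}{2\pi}\mathrm{Im}(\overline{z}\,\xi_t(z))$ for $\lambda=\frac{1}{2\pi}\mathrm{Im}(\overline{z}\,dz)$. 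So your route buys a pointwise statement at no extra analytic cost — the same near-diagonal integrability estimate is still needed.

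Two steps would fail as literally written. First, ``evaluating on rotations pins the constant down'' is not valid: if all you know is that $x\mapsto\int_\D\Ang_{\ti f}(x,y)\,\omega(y)$ is a primitive of $f^*\lambda-\lambda$, the undetermined constant depends on $\ti f$, and computing it for rotations determines nothing for any other $\ti f$. The fix is the $t$-differentiation you yourself mention: integrate the derivative from $t=0$, where both sides vanish identically, so no constant ever appears (alternatively, evaluate on the boundary, where $\overline{A}|_{\Ss^1}$ is the displacement function). Second, your continuity reduction runs through continuity of $\Cal_1$ first, and the boundary normalization makes that not routine: with origin-normalized primitives $B_n\to B$ uniformly you still must show $\int_{\Ss^1}B_n\,d\mu_n\to\int_{\Ss^1}B\,d\mu$, where $\mu_n$ is merely some $f_n|_{\Ss^1}$-invariant measure; this requires weak-$*$ compactness of probability measures on $\Ss^1$ together with Proposition \ref{mu} (independence of the choice of invariant measure) to identify the limit along subsequences. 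The paper sidesteps this entirely by proving $C^1$-continuity of $\ti{\Cal}_2$ directly at the identity through the elementary estimates $|\cos(2\pi\Ang_{\ti f}(x,y))-1|\le 2\epsilon$ and $\|\Ang_{\ti f}\|_\infty\le\sqrt{\epsilon}/\pi$ (Lemma \ref{lemAngC0}, Corollary \ref{AngC0}), invoking the morphism property, and only then deducing continuity of $\Cal_1$ from $\Cal_1=\ti{\Cal}_2-\ti\rho$ — the opposite of your reduction. Both weak points are patchable, one of them by machinery the paper already contains.
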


We separate the proof into two subsections, in the first one we establish the links between the previous definitions then we prove the continuity of $\ti\Cal_2$ and $\ti\Cal_3$.

\subsection{Equality between $\ti\Cal_2$ and $\ti\Cal_3$.}

\begin{prop}
The morphisms $\ti{\Cal}_2$ and $\ti{\Cal}_3$ are equal.
\end{prop}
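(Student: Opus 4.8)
The plan is to prove the pointwise identity $\ti{\Cal}_2(\ti f)=2\int_0^1\int_\D H_t\,\omega\,dt$, which is precisely $\ti{\Cal}_3(\ti f)$; since both numbers are attached to the same $\ti f=(f,\tphi)$, this yields the equality of the two morphisms. Fix a Hamiltonian isotopy $I=(f_t)_{\tin}$ from $\id$ to $f$ associated to $\tphi$, with generating vector field $(X_t)$ and normalized Hamiltonian $(H_t)$ satisfying $\iota_{X_t}\omega=dH_t$ and $H_t|_{\Ss^1}=0$. First I would differentiate the angle along the isotopy: by equation \ref{Angpolar}, writing $\gamma_t=f_t(x)-f_t(y)$ and $u\wedge w=u_1w_2-u_2w_1$,
$$\Ang_I(x,y)=\frac{1}{2\pi}\int_0^1\frac{\gamma_t\wedge\dot\gamma_t}{|\gamma_t|^2}\,dt.$$
Since $f$ is $C^1$, the field $X_t$ is Lipschitz, so $|\dot\gamma_t|=|X_t(f_t(x))-X_t(f_t(y))|\le L|\gamma_t|$ and the time-integrand is bounded by $L/(2\pi)$ uniformly in $(x,y)$. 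This bound justifies Fubini's theorem, so that
$$\ti{\Cal}_2(\ti f)=\frac{1}{2\pi}\int_0^1\left(\int_{\D^2\backslash\Delta}\frac{\gamma_t\wedge\dot\gamma_t}{|\gamma_t|^2}\,\omega(x)\omega(y)\right)dt.$$

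Next, at each fixed $t$, I would change variables $a=f_t(x)$, $b=f_t(y)$. As $f_t$ is an $\omega$-preserving diffeomorphism fixing the diagonal, this is legitimate and gives $\omega(x)\omega(y)=\omega(a)\omega(b)$, $\gamma_t=a-b$, $\dot\gamma_t=X_t(a)-X_t(b)$. Splitting the numerator and using the symmetry $a\leftrightarrow b$ (which carries $\tfrac{(a-b)\wedge X_t(b)}{|a-b|^2}$ to $\tfrac{(a-b)\wedge X_t(a)}{|a-b|^2}$ with a sign change) collapses the two terms into one, and with $(a-b)\wedge p=\langle J(a-b),p\rangle$ for $J$ the rotation by $\pi/2$, I obtain
$$\ti{\Cal}_2(\ti f)=\frac{1}{\pi}\int_0^1\int_\D \langle X_t(a),W(a)\rangle\,\omega(a)\,dt,\qquad W(a)=\int_\D\frac{J(a-b)}{|a-b|^2}\,\omega(b).$$

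The crux is to evaluate $W$. I would recognize $\frac{J(a-b)}{|a-b|^2}=\nabla_a^\perp\log|a-b|$, so that $W=\nabla^\perp G$ with $G(a)=\int_\D\log|a-b|\,\omega(b)$ the logarithmic potential of the normalized area measure. Because $\Delta_a\log|a-b|=2\pi\delta_b$ and $\omega$ has constant density $1/\pi$, one gets $\Delta G\equiv 2$ on $\mathring\D$; solving the radial equation (equivalently, Newton's theorem for a uniformly filled disk) gives $\nabla G(a)=a$, hence $W(a)=Ja$ for $a\in\mathring\D$. Substituting and using $\langle X_t(a),Ja\rangle=a\wedge X_t(a)$ together with $\iota_{X_t}\omega=dH_t$, which in coordinates reads $a\wedge X_t(a)=-\pi\,\langle a,\nabla H_t(a)\rangle$, I reach
$$\ti{\Cal}_2(\ti f)=-\frac{1}{\pi}\int_0^1\int_\D \langle a,\nabla H_t(a)\rangle\,dA\,dt.$$
Finally I would integrate by parts via $\langle a,\nabla H_t\rangle=\nabla\cdot(H_t\,a)-2H_t$; the boundary term $\oint_{\Ss^1}H_t\,\langle a,n\rangle\,ds$ vanishes since $H_t|_{\Ss^1}=0$, leaving $\int_\D\langle a,\nabla H_t\rangle\,dA=-2\int_\D H_t\,dA$, and therefore $\ti{\Cal}_2(\ti f)=2\int_0^1\int_\D H_t\,\omega\,dt=\ti{\Cal}_3(\ti f)$.

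I expect the main obstacle to be the explicit evaluation $W(a)=Ja$: it hinges on identifying the singular kernel as $\nabla^\perp$ of a logarithmic potential and on the clean fact that the field induced by the uniformly filled disk is a rigid rotation inside. The Fubini/integrability step near the diagonal is routine given the $C^1$ (hence Lipschitz) control on $X_t$, and the closing integration by parts is immediate once the normalization $H_t|_{\Ss^1}=0$ is invoked. This computation, valid for each $\ti f$, simultaneously establishes $\ti{\Cal}_2=\ti{\Cal}_3$ and supplies the identity $2\int_0^1\int_\D H_t\,\omega\,dt=\ti{\Cal}_2(\ti f)$ feeding Lemma \ref{hamchoice}.
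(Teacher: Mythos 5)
Your proof is correct and lands on exactly the identity the paper's proof establishes, namely $\ti{\Cal}_2(\ti f)=2\int_0^1\int_\D H_t\,\omega\,dt$, but it takes a genuinely different route. The paper works in complex coordinates: it writes $d\theta=2\pi\,\mathrm{Im}(\alpha)$ with $\alpha=\frac{1}{2\pi}\frac{d(z_1-z_2)}{z_1-z_2}$, performs the same symmetrization you do (collapsing the two terms of the double integral into one copy of $\xi_t(z_1)/(z_1-z_2)$, where $\xi_t=-2i\pi\,\partial H_t/\partial\overline{z}$), and then applies the Cauchy--Pompeiu formula for smooth functions, whose circle term vanishes because $H_t|_{\Ss^1}=0$, to reconstruct $H_t(z_2)$ directly. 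You instead integrate over $b$ first, recognizing $W(a)=\int_\D J(a-b)/|a-b|^2\,\omega(b)$ as $\nabla^\perp$ of the logarithmic potential of the uniformly filled disk, evaluating $W(a)=Ja$ by Newton's theorem, and then recovering $\int_\D H_t$ by a divergence-theorem integration by parts that again uses only $H_t|_{\Ss^1}=0$. The two mechanisms are morally equivalent (the Cauchy kernel is a Wirtinger derivative of $\log|a-b|$, and Cauchy--Pompeiu is a Green identity), but your version is purely real-variable and isolates the geometric fact that the uniform disk generates a rigid rotation field in its interior, which is a pleasant alternative to the complex-analytic bookkeeping; what it costs is the explicit potential computation, which the Cauchy formula delivers in one stroke. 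Two small caveats. First, your Fubini justification overstates the available regularity: a $C^1$ Hamiltonian isotopy only provides a continuous vector field $X_t$, not a Lipschitz one, so the bound $|\dot\gamma_t|\le L|\gamma_t|$ is not legitimate in general; but Fubini holds anyway from the weaker estimate $|\gamma_t\wedge\dot\gamma_t|/|\gamma_t|^2\le 2\|X_t\|_\infty/|f_t(x)-f_t(y)|$ together with the integrability of $1/|a-b|$ on $\D^2$, which is precisely the content of the paper's Lemma \ref{Fubini}. Second, concerning the factor $2$: your conclusion matches the paper's computed identity and the normalization in Lemma \ref{hamchoice}, whereas the displayed formula in the statement of Theorem \ref{thm3} omits the factor $2$; that discrepancy is internal to the paper and not a defect of your argument.
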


\begin{proof}  The proof is essentially the same as in \cite{SHE}, the only difference is that our symplectic form is normalized and the Hamiltonian diffeomorphisms that we consider is not compactly supported in the open unit disk. Nevertheless, we verify that the proof is still relevant in our case.\\

Let us consider $\ti f=(f,\ti\phi)\in\ti{\Diff}^1_\omega(\D)$ and a Hamiltonian isotopy $I=(f_t)_{t\in[0,1]}$ from $\id$ to $f$ associated to $\tphi$. For the proof we will give a definition of the angle function $\Ang_I$ in the complex coordinates as follows. We define a $1$-form $\alpha$ by 
$$\alpha=\frac{1}{2\pi}\frac{d(z_1-z_2)}{z_1-z_2}.$$
The imaginary part satisfies
$$d\theta=2\pi\text{Im}(\alpha),$$
where $\theta$ is the angle coordinate in the radial coordinates. For an element $Z=(z_1,z_2)\in\D^2\backslash \Delta$ of we consider the curve $I_Z\subset \D\times\D\backslash\Delta$ defined by 
$$t\mapsto I_Z(t)=(f_t(z_1),f_t(z_2)),$$
for each $t\in[0,1]$ and that for every element $Z=(z_1,z_2)\in \D\times\D\backslash\Delta(\D)$ we have
\begin{equation}\label{angdef}
\Ang_I(z_1,z_2)=\frac{1}{2\pi}\int_{I_Z}d\theta.
\end{equation} 

Let us consider the Hamiltonian $(H_t)_{\tin}$ which induces the flow of the isotopy $I$ and which is equal to $0$ on the boundary of $\D$. We consider the symplectic form $\omega=\frac{i}{2\pi}dz\wedge\overline{z}$ written in the complex coordinates on $\D$. We define $\xi_t=dz(X_t)$ and then it satisfies 
$$i_{X_t}\left(\frac{i}{2\pi}dz\wedge\overline{z}\right)=\frac{i}{2\pi}\xi_td\overline{z}-\frac{i}{2\pi}\overline{\xi}_tdz.$$

By definition
$$dH_t=-\frac{\partial H_t}{\partial\overline{z}}d\overline{z}-\frac{\partial H_t}{\partial z}dz,$$

so we have
\begin{equation}\label{xi}
\xi_t=-2i\pi\frac{\partial H_t}{\partial\overline{z}}.
\end{equation}

We compute the integral of the angle function
\begin{align*}
\int_{\D\times\D\backslash\Delta}\mathrm{Ang}_I(z_1,z_2)\omega(z_1)\omega(z_2)&=\int_{\D\times\D\backslash\Delta}\int_{I_{(z_1,z_2)}}\frac{1}{2\pi}d\theta \ \omega(z_1)\omega(z_2)\\
&=\mathrm{Im}\left(\int_{\D\times\D\backslash\Delta}\int_{I_{(z_1,z_2)}}\alpha \ \omega(z_1)\omega(z_2)\right).
\end{align*}

The following computation is well-inspired by the proof in \cite{SHE}.
\begin{align*}
\int_{\D\times\D\backslash\Delta}\int_{I_{(z_1,z_2)}}\alpha \ \omega(z_1)\omega(z_2)&=\frac{1}{2\pi}\int_{\D\times\D\backslash\Delta}\int_{I_{(z_1,z_2)}} \frac{d(z_1-z_2)}{z_1-z_2}\omega(z_1)\omega(z_2)\\
&=\frac{1}{2\pi}\int_{\D\times\D\backslash\Delta}\int_{t=0}^1\frac{\xi_t(f_t(z_1))-\xi_t(f_t(z_2))}{f_t(z_1)-f_t(z_2)}dt\omega(z_1)\omega(z_2),\\
&=\frac{1}{2\pi}\int_{t=0}^1\int_{\D\times\D\backslash\Delta}\frac{\xi_t(f_t(z_1))-\xi_t(f_t(z_2))}{f_t(z_1)-f_t(z_2)}\omega(z_1)\omega(z_2)dt,\\
&=2\times\frac{1}{2\pi}\int_{t=0}^1\int_{z_2\in\D}\int_{z_1\in\D\backslash\{z_2\}}\frac{\xi_t(z_1)}{z_1-z_2}\omega(z_1)\omega(z_2)dt\\
&=\frac{1}{\pi}\int_0^1\int_\D \int_{\D\backslash\{z_2\}}-2i\pi\frac{\partial H_t}{\partial\overline{z}}\frac{i}{2\pi} \frac{dz_1\wedge d\overline{z_1}}{z_1-z_2}\omega(z_2)dt\\
&=2i \int_0^1\int_\D \int_{\D\backslash\{z_2\}}\frac{1}{2i\pi}\frac{\partial H_t}{\partial\overline{z}} \frac{dz_1\wedge d\overline{z_1}}{z_1-z_2}\omega(z_2)dt.
\end{align*}

The third equality is obtained by Fubini because the integral is absolutely integrable by Lemma \ref{Fubini}. The fourth equality is due to the absolutely integrability of both terms. We established the penultimate with equation \ref{xi} and the definition of $\omega$.\\

We use the Cauchy formula for smooth functions (see \cite{LARS}). For any $C^1$-function $g:\D\ra\C$, we have 
$$g(w)=\frac{1}{2i\pi}\int_{\Ss^1} \frac{g(z)}{z-w}dz+\frac{1}{2i\pi}\int_\D \frac{\partial f}{\partial\overline{z}}\frac{dz\wedge d\overline{z}}{z-w}.$$
Moreover $H_t$ is equal to zero on the boundary $\Ss^1$ and we have
$$\int_{\D\times\D\backslash\Delta}\int_{I_{(z_1,z_2)}}\alpha \ \omega(z_1)\omega(z_2)=2i\int_0^1\int_{\D}H_t(z_2)\omega(z_2)dt.$$
It leads to 
$$\int_{\D\times\D\backslash\Delta}\Ang_I(z_1,z_2)\omega(z_1)\omega(z_2)=2\int_0^1\int_\D H_t(z)\omega(z)dt.$$
To obtain the result it remains to prove the absolute integrability we used in the computation.

\begin{lem}\label{Fubini}
We have the following inequality 
$$\int_{\D\times\D\backslash\Delta}\int_{t=0}^1\left|\frac{\xi_t(f_t(z_1))-\xi_t(f_t(z_2))}{f_t(z_1)-f_t(z_2)}\right|\omega(z_1)\omega(z_2)dt<\infty.$$
\end{lem}

\begin{proof}
The total measure of $\D\times\D\backslash\Delta$ for $\omega$ and $[0,1]$ for the Lebesgue measure is finite so by Tonnelli's theorem it is sufficient to have the following inequalities 
\begin{align*}
\int_{t=0}^1\int_{\D\times\D\backslash\Delta}\left|\frac{\xi_t(f_t(z_1))-\xi_t(f_t(z_2))}{f_t(z_1)-f_t(z_2)}\right|\omega(z_1)\omega(z_2)dt&=\int_{t=0}^1\int_{\D\times\D\backslash\Delta}\left|\frac{\xi_t(z_1)-\xi_t(z_2)}{z_1-z_2}\right|\omega(z_1)\omega(z_2)dt\\
&\leq 2\int_{t=0}^1\int_{z_1\in\D}|\xi_t(z_1)|\int_{z_2\in\D\backslash\{z_1\}}\frac{1}{|z_1-z_2|}\omega(z_1)\omega(z_2)dt\\
&\leq 8\pi\int_{t=0}^1\int_{z_1\in\D}|\xi_t(z_1)|\omega(z_1)dt\\
&<\infty.
\end{align*}

To prove the second last inequality one may prove that
$$\int_{z_2\in\D\backslash\{z_1\}}\frac{1}{|z_1-z_2|}\omega(z_2)\leq 4\pi.$$
\end{proof}
\end{proof}

\begin{rem}
The number $\ti{\Cal}_2(f,\tphi)$ does not depend on the choice of the isotopy in the homotopy class of $I$, we obtain the same result for the construction of $\ti{\Cal}_3(f,\tphi)$ which completes the proof of Lemma \ref{hamchoice}. 
\end{rem}

\begin{prop}
For each element $\ti f=(f,\ti{\phi})\in \ti{\Diff}^1_\omega(\D)$ we have 
$$\ti{\Cal}_3(\ti f)=\Cal_1(f)+\ti{\rho}(\ti{\phi}).$$
\end{prop}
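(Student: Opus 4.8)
The plan is to build the canonical primitive of $f^*\lambda-\lambda$ coming from the isotopy, integrate it over $\D$, and split the result into an interior contribution that reconstructs $\ti{\Cal}_3$ and a boundary contribution that reconstructs $\ti{\rho}(\tphi)$. First I would fix a Hamiltonian isotopy $I=(f_t)_{\tin}$ from $\id$ to $f$ realizing $\tphi$, with Hamiltonian $(H_t)$ normalized by $H_t|_{\Ss^1}=0$ and vector field $(X_t)$. By formula (\ref{actionham}), the $C^1$ function
$$\hat A(z)=\int_0^1\big(\iota(X_s)\lambda+H_s\big)\circ f_s(z)\,ds$$
is a primitive of $f^*\lambda-\lambda$, so $A_f=\hat A-c$ with $c=\int_{\Ss^1}\hat A\,d\mu$ for an $f|_{\Ss^1}$-invariant measure $\mu$ (Proposition \ref{mu} guarantees $c$ is well defined), and hence $\Cal_1(f)=\int_\D\hat A\,\omega-c$ because $\omega$ is a probability measure.

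Next I would evaluate the interior integral. By Fubini and the change of variables $w=f_s(z)$, which preserves $\omega$ for each $s$,
$$\int_\D\hat A\,\omega=\int_0^1\int_\D\iota(X_s)\lambda\,\omega\,ds+\int_0^1\int_\D H_s\,\omega\,ds.$$
The key computational step is the pointwise identity $\int_\D\iota(X_s)\lambda\,\omega=\int_\D H_s\,\omega$. To obtain it, apply Stokes to $H_s\lambda$: since $H_s|_{\Ss^1}=0$ one gets $\int_\D H_s\,\omega=-\int_\D dH_s\wedge\lambda$, and since $\D$ is a surface the relation $\iota(X_s)(\omega\wedge\lambda)=0$ gives $dH_s\wedge\lambda=(\iota(X_s)\omega)\wedge\lambda=-(\iota(X_s)\lambda)\,\omega$. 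Combining, $\int_\D\hat A\,\omega=2\int_0^1\int_\D H_s\,\omega\,ds$, which by the preceding proposition and the computation in its proof equals $\ti{\Cal}_2(\ti f)=\ti{\Cal}_3(\ti f)$.

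It then remains to identify the constant $c$ with the rotation number, which is where the boundary enters and is the main point. On $\Ss^1$ one has $H_s=0$, $\lambda=\tfrac{1}{2\pi}d\theta$, and $X_s$ is tangent to $\Ss^1$; hence for $z\in\Ss^1$ the integrand reduces to $\iota(X_s)\lambda\circ f_s(z)$, and $\hat A(z)=\int_0^1\iota(X_s)\lambda\circ f_s(z)\,ds$ is exactly the normalized total angle swept by $z$ along the boundary isotopy, i.e. the displacement $\ti\delta(z)=\tphi(z)-z$ of the lift determined by $I$. Thus $\hat A|_{\Ss^1}=\ti\delta$ and, by the Birkhoff ergodic formula for the rotation number recalled in Section \ref{preliminaries}, $c=\int_{\Ss^1}\ti\delta\,d\mu=\ti{\rho}(\tphi)$. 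Putting the two computations together yields $\Cal_1(f)=\ti{\Cal}_3(\ti f)-\ti{\rho}(\tphi)$, that is, $\ti{\Cal}_3(\ti f)=\Cal_1(f)+\ti{\rho}(\tphi)$.

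I expect the main obstacle to be precisely this last identification: one must check that the boundary restriction of $\hat A$ is the circle displacement $\ti\delta$ (this is where the normalization $H_s|_{\Ss^1}=0$ must be matched with the lift $\tphi$ encoded in $\ti f$), and that the outcome is independent of the isotopy in its homotopy class and of $\mu$ — the latter coming from Proposition \ref{mu} and the Birkhoff formula, both of which pin the relevant average to $\ti{\rho}(\tphi)$. The interior identity $\int_\D\iota(X_s)\lambda\,\omega=\int_\D H_s\,\omega$ is by contrast a clean two–dimensional computation. Finally I would sanity-check the whole chain on $R_\theta$, where $\hat A|_{\Ss^1}\equiv\theta/2\pi$, $c=\ti{\rho}=\theta/2\pi$ and the interior term also equals $\theta/2\pi$, recovering $\Cal_1(R_\theta)=0$.
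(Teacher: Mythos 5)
Your proposal is correct and matches the paper's own proof in all essentials: both build the primitive $\int_0^1(\iota(X_s)\lambda+H_s)\circ f_s\,ds$ from the normalized Hamiltonian isotopy (you cite equation (\ref{actionham}), the paper rederives it by differentiating $A_t$), both convert $\int_\D\iota(X_s)\lambda\,\omega$ into $\int_\D H_s\,\omega$ via the vanishing of $\iota(X_s)(\lambda\wedge\omega)$ on a surface together with Stokes and $H_s|_{\Ss^1}=0$, and both identify the boundary restriction with the displacement of $\tphi$ (using $\lambda|_{\Ss^1}=\frac{1}{2\pi}d\theta$, legitimate since $\Cal_1$ is $\lambda$-independent) and apply the Birkhoff formula to pin the normalizing constant to $\ti{\rho}(\tphi)$. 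No gaps; your final appeal to the equality $\ti{\Cal}_2=\ti{\Cal}_3$ to absorb the factor $2\int_0^1\int_\D H_t\,\omega\,dt$ is exactly how the paper closes the argument as well.
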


\begin{proof}
Let us consider an element $\ti f=(f,\ti{\phi})\in \ti{\Diff}^1_\omega(\D)$ and a Hamiltonian isotopy $I=(f_t)_{\tin}$ from $\id$ to $f$ associated to $\tphi$. There exists a unique Hamiltonian function $(H_t)_{t\in \R}$ which induces the isotopy $I$ and such that $H_t$ is zero on the boundary $\Ss^1$ of $\D$ for each $t\in\R$.\\
 
We know that $\Cal_1$ does not depend on the choice of the primitive of $\omega$. We consider the Liouville $1$-form $\lambda=\frac{r^2}{2\pi}d\theta$ in the radial coordinates. We consider a probability measure $\mu\in\mathcal{M}(f|_{\Ss^1})$. \\

We describe the link between the action function of the first definition and the Hamiltonian of the third definition. We consider a $C^1$ family of functions $(A_t)_{\tin}$, where $A_t:\D\ra\R$ satisfies for each $\tin$
$$dA_t=f_t^*\lambda-\lambda,$$
and such that the map $A_1$ is equal to $A(f,\lambda,\mu)$. So the isotopy $(A_t)_{\tin}$ satisfies 
\begin{align*}
d\dot A_t&=\frac{d}{dt}(f_t^*\lambda)\\
&=f_t^*\mathcal{L}_{X_t}\\
&=f_t^*(i_{X_t}(d\lambda)+d(\lambda(X_t)))\\
&=d(H_t\circ f_t+ \lambda(X_t)\circ f_t).
\end{align*}
Then there exists a constant $c_t:[0,1]\ra\R$ such that 
$$\dot A_t=H_t\circ f_t+ \lambda(X_t)\circ f_t +c_t,$$
and the map $A:\D\ra\R$ satisfies for each $z\in\D$ 
$$A_1(z)=\int_0^1(H_t+i_{X_t}\lambda)(f_t(z))dt +\int_0^1c_tdt.$$
We denote by $C$ the constant $\int_0^1c_tdt$. Since the restriction of $\lambda$ to $\Ss^1$ is equal to $\frac{1}{2\pi}d\theta$ then for every $z\in\Ss^1$ we have
$$\int_0^1i_{X_t}\lambda(f_t(z))dt=\frac{1}{2\pi}\int_0^1 d\theta(\frac{\partial}{\partial t}f_t(z))dt.$$
Notice that the last integral is equal to the displacement function $\delta:\R\ra\R$ of $\tphi$.\\

Moreover, the rotation number $\ti{\rho}(\tphi)$ of the isotopy $I$ satisfies for each $z\in\Ss^1$
$$\ti{\rho}(\tphi)= \underset{n\ra \infty}{\text{lim}}\frac{1}{n}\sum_{k=0}^{n-1}\delta(\tphi^k(z)).$$
The map $z\mapsto \delta(z)$ is $\mu$ integrable and the Birkhoff ergodic theorem gives us
$$\int_{\Ss^1}\ti{\rho}(\tphi)d\mu(z)=\int_{\Ss^1}\delta(z)d\mu(z).$$
We obtain 
$$\int_{\Ss^1}\int_0^1i_{X_t}\lambda(f_t(z))dtd\mu(z)=\int_{\Ss^1}\ti{\rho}(\tphi)d\mu(z)=\ti\rho(I|_{\Ss^1}).$$
Moreover, the Hamiltonian $H_t$ is equal to zero on $\Ss^1$. So if $z\in\Ss^1$ it holds that $A_1(z)=\delta(z)+C$ and consequently
$$\int_{\Ss^1}A_1(z)d\mu(z)=C+\ti{\rho}(\tphi).$$
So the condition on $A$ implies that 
$$C=-\ti{\rho}(\tphi).$$
Thus 
\begin{align*}
\int_\D A(z)\omega(z)&=\int_\D\int_0^1(H_t+i_{X_t}\lambda)(f_t(z))dt\omega(z)-\ti{\rho}(\tphi) \\
&=\int_\D\int_0^1H_t(f_t(z))dt\omega(z)+\int_\D\int_0^1i_{X_t}(\lambda)(f_t(z))dt\omega(z)-\ti{\rho}(\tphi).
\end{align*}
We compute $\int_\D\int_0^1i_{X_t}(\lambda)(f_t(z))dt\omega(z)$. Each $3$-form is zero on the disk so we have 
\begin{align*}
0&=i_{X_t}(\lambda\wedge \omega)\\
&=i_{X_t}(\lambda) \omega -\lambda\wedge i_{X_t}(\omega)\\
&=i_{X_t}(\lambda) \omega - \lambda \wedge dH_t\\
&=i_{X_t}(\lambda) \omega +dH_t\wedge \lambda\\
&=i_{X_t}(\lambda) \omega +d(H_t\lambda)-H_t\omega.\\
\end{align*}
We deduce that 
\begin{align*}
\int_\D\int_0^1i_{X_t}(\lambda)(f_t(z))dt\omega(z)&=\int_\D\int_0^1 (H_t\omega-d(H_t\lambda))dt\\
&=\int_\D\int_0^1 H_t\omega dt \omega dt-\int_0^1\int_{\Ss^1}H_t\lambda dt\\
&=\int_\D\int_0^1 H_t \omega dt,
\end{align*}
where the last equality is due to the fact that $f_t$ preserves $\omega$. Moreover $H_t$ is equal to zero on the boundary $\Ss^1$. We obtain 
$$\int_\D A(z)\omega(z)=2\int_\D\int_0^1 H_t(z) \omega(z)dt-\ti{\rho}(\tphi).$$
\end{proof}

We know that $\ti\rho$ is a homogeneous quasi-morphism, it gives us the following corollary. 

\begin{coro}
The map $\Cal_1:\Diff^1_\omega(\D)\ra \R$ is a homogeneous quasi-morphism.
\end{coro}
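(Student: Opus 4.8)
The plan is to read off both quasi-morphism axioms directly from the identity
$$\Cal_1(f)=\ti{\Cal}_2(\ti f)-\ti{\rho}(\tphi),\qquad \ti f=(f,\tphi),$$
established in the previous proposition by rewriting $\ti{\Cal}_2(\ti f)=\Cal_1(f)+\ti{\rho}(\tphi)$, and valid for any lift $\ti f$ of $f$. This exhibits $\Cal_1$ as the difference of the genuine morphism $\ti{\Cal}_2$ on $\ti{\Diff}^1_\omega(\D)$ (Theorem \ref{Cal2}) and the rotation number $\ti{\rho}$, which is itself a homogeneous quasi-morphism. The whole defect of $\Cal_1$ should therefore be inherited from $\ti{\rho}$ alone, the morphism term contributing nothing. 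Note that the right-hand side is automatically independent of the chosen lift, since $\Cal_1(f)$ is defined intrinsically in Theorem \ref{thm1.0}.

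First I would check the bounded-defect inequality. Given $f,g\in\Diff^1_\omega(\D)$, I would choose lifts $\ti f=(f,\tphi)$ and $\ti g=(g,\tpsi)$, so that $\ti f\circ\ti g=(f\circ g,\tphi\circ\tpsi)$ is a lift of $f\circ g$. Substituting the displayed identity into $\Cal_1(f\circ g)-\Cal_1(f)-\Cal_1(g)$ and using the additivity $\ti{\Cal}_2(\ti f\circ\ti g)=\ti{\Cal}_2(\ti f)+\ti{\Cal}_2(\ti g)$, the three $\ti{\Cal}_2$-terms cancel and one is left with
$$\Cal_1(f\circ g)-\Cal_1(f)-\Cal_1(g)=\ti{\rho}(\tphi)+\ti{\rho}(\tpsi)-\ti{\rho}(\tphi\circ\tpsi).$$
Since $\ti{\rho}$ satisfies $|\ti{\rho}(\tphi\circ\tpsi)-\ti{\rho}(\tphi)-\ti{\rho}(\tpsi)|<1$, as recalled in section \ref{preliminaries}, this yields the quasi-morphism inequality with constant $C=1$.

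For homogeneity I would fix $f$ and $n\in\Z$, pick a lift $\ti f=(f,\tphi)$, and observe that $\ti f^{\,n}=(f^n,\tphi^n)$ is a lift of $f^n$. Because $\ti{\Cal}_2$ is a morphism we have $\ti{\Cal}_2(\ti f^{\,n})=n\,\ti{\Cal}_2(\ti f)$, and because $\ti{\rho}$ is homogeneous we have $\ti{\rho}(\tphi^n)=n\,\ti{\rho}(\tphi)$; inserting these into the identity gives $\Cal_1(f^n)=n\bigl(\ti{\Cal}_2(\ti f)-\ti{\rho}(\tphi)\bigr)=n\,\Cal_1(f)$. There is no genuine obstacle here, as all the analytic content is already contained in the equality $\ti{\Cal}_2=\ti{\Cal}_3$ and the computation of $\ti{\Cal}_2(\ti f)-\ti{\rho}(\tphi)$. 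The only point needing a little care is that the chosen lifts compose as claimed on the boundary circle, i.e. that concatenating the isotopies defining $\ti{\Cal}_2$ corresponds to the product $\tphi\circ\tpsi$ of the boundary lifts, which is exactly the compatibility already used in the proof of Theorem \ref{Cal2}.
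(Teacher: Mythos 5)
Your proof is correct and takes essentially the same route as the paper: the paper's own proof consists of the single observation that $\Cal_1$ is the sum of a morphism and a homogeneous quasi-morphism, via exactly the identity $\ti{\Cal}_2(\ti f)=\Cal_1(f)+\ti{\rho}(\tphi)$ that you invert and expand. You simply make explicit the bounded-defect and homogeneity computations (and the lift-independence and composition of lifts) that the paper leaves implicit.
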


\begin{proof}
The result is straightforward because $\Cal_1$ is equal to the sum of a morphism and a homogeneous quasi-morphism.
\end{proof}

Notice that Lemma \ref{rotation} ensures that the morphisms $\ti\Cal$ (resp. $\Cal$) is not zero, then its kernel is a normal non trivial subgroup of $\ti\Diff^1_\omega(\D)$ (resp. $\Diff^1_\omega(\D)$ and we obtain the following corollary. 

\begin{coro}
The groups $\ti{\Diff}^1_\omega(\D)$ and $\Diff^1_\omega(\D)$ are not perfect.
\end{coro}

					\subsection{Continuity of $\ti\Cal$.}
					
For every continuous map $f$ from $\D$ to $\C$ we set $||f||_\infty=\max_{x\in \D}|f(x)|$.\\
We denote $d_0$ the distance between two maps $f$ and $g$ of $\Diff^0(\D)$ defined by
$$d_0(f,g)=\max (||f-g||_\infty, ||f^{-1}-g^{-1}||_\infty).$$
We denote $d_1$ the distance between two maps $f$ and $g$ of $\Diff^1(\D)$ defined by
$$d_1(f,g)=\max(d_0(f,g), ||Df-Dg||_\infty,||Df^{-1}-Dg^{-1}||_\infty),$$
where for every $C^1$ diffeomorphism $f$ of $\D$, $||Df||_\infty =\max_{x\in\D} ||D_xf||$.\\

The distances $d_0$ and $d_1$ define naturally two distances, denoted $\ti d_0$ and $\ti d_1$, on $\ti{\Diff}_\omega^1(\D)$ defined as follows. Let us consider $\ti f=(f,\ti{\phi})$ and $\ti g=(g,\ti{\psi})$ in $\ti{\Diff}_\omega^1(\D)$, we have
\begin{align*}
\ti d_0(\ti f,\ti g)=\max(d_0(f,g), ||\ti\phi-\ti\psi||_\infty,||\ti\phi^{-1}-\ti\psi^{-1}||_\infty),\\
\ti d_1(\ti f,\ti g)=\max(d_1(f,g), ||\ti\phi-\ti\psi||_\infty,||\ti\phi^{-1}-\ti\psi^{-1}||_\infty).
\end{align*}

We denote $\ti\id=(\id_\D,\id_\R)\in\ti\Diff^1_\omega(\D)$. In this section we prove the following result.
\begin{theorem}\label{calabiC0}
The map $\ti\Diff_\omega^1(\D) \ra \R$ is continuous in the $C^1$ topology.
\end{theorem}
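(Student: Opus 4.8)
The plan is to prove continuity of $\ti{\Cal}_2=\ti{\Cal}_3$ in the $C^1$ topology by leveraging the Hamiltonian description $\ti{\Cal}_3(\ti f)=\int_0^1\int_\D H_t\,\omega\,dt$, which reduces the problem to controlling the Hamiltonian generating an isotopy in terms of $C^1$-data. The subtlety is that a given $\ti f$ does not come with a canonical isotopy, so I would first fix, for each $\ti f=(f,\tphi)$ near $\ti{\id}$, a \emph{concrete} isotopy whose generating Hamiltonian depends continuously and explicitly on $\ti f$. The natural choice is to use the equality $\ti{\Cal}_2(\ti f)=\Cal_1(f)+\ti{\rho}(\tphi)$ established in the previous subsection: then it suffices to prove separately that $f\mapsto\Cal_1(f)$ and $\tphi\mapsto\ti{\rho}(\tphi)$ are continuous in the $C^1$ and $C^0$ topologies respectively. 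The second is standard, since $\ti{\rho}$ is $1$-Lipschitz for the uniform norm on $\ti{\Homeo}_+(\Ss^1)$ (indeed $|\ti{\rho}(\tphi)-\ti{\rho}(\tpsi)|\le\|\tphi-\tpsi\|_\infty$ follows from the quasi-morphism estimates recalled in the preliminaries), so the real work is the continuity of $\Cal_1$.

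To handle $\Cal_1$ I would work directly from its definition $\Cal_1(f)=\int_\D A_f\,\omega$ with $dA_f=f^*\lambda-\lambda$ and the normalization $\int_{\Ss^1}A_f\,d\mu=0$, using the fixed Liouville form $\lambda=\frac{r^2}{2\pi}\,d\theta$. First I would show that the primitive constructed by integration along radii, namely $P_f(z)=\int_{\gamma_z}(f^*\lambda-\lambda)$ with $\gamma_z(t)=tz$, depends continuously on $f$ in the $C^1$ topology: the integrand is built from $f$ and $Df$ in an algebraic way, so if $d_1(f,g)$ is small then $\|f^*\lambda-g^*\lambda\|_\infty$ is small uniformly on $\D$, and integrating over the bounded paths $\gamma_z$ gives $\|P_f-P_g\|_\infty$ small. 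Then I would pass from $P_f$ to the normalized $A_f$ by subtracting the constant $\int_{\Ss^1}P_f\,d\mu$; the difficulty here is that the normalizing measure $\mu$ may jump discontinuously with $f$ (when $f|_{\Ss^1}$ moves between rational and irrational rotation numbers), but Proposition~\ref{mu} guarantees the value $\int_{\Ss^1}A_f\,d\mu$ is independent of $\mu$, so I may compute the normalizing constant using \emph{any} convenient invariant measure, e.g.\ one supported on a periodic orbit or a weak-$*$ limit, and bound it by $\|P_f-P_g\|_\infty$ plus a control of how the circle dynamics move.

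The main obstacle I anticipate is precisely this last point: a priori the invariant measure on $\Ss^1$ used to normalize $A_f$ is not continuous in $f$, so one cannot naively write $\int_{\Ss^1}A_f\,d\mu_f-\int_{\Ss^1}A_g\,d\mu_g$ and bound term by term. I would circumvent this by showing that the normalizing constant can be expressed through a Birkhoff-type average that is jointly controlled: using the expression $\int_{\Ss^1}P_f\,d\mu=\lim_n\frac1n\sum_{k=0}^{n-1}P_f(f^k(z))$ valid for ergodic $\mu$, together with the $C^0$-closeness of $f^k(z)$ and $g^k(z)$ for bounded $k$ and the uniform continuity of $P_f$, one gets the needed uniform estimate on the constants after first choosing $n$ large (controlling the tail) and then $d_1(f,g)$ small (controlling finitely many orbit segments). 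Once $\|A_f-A_g\|_\infty\to0$ as $d_1(f,g)\to0$ is established, continuity of $\Cal_1$ follows immediately since $\big|\int_\D(A_f-A_g)\,\omega\big|\le\|A_f-A_g\|_\infty$, and combining with the Lipschitz continuity of $\ti{\rho}$ yields continuity of $\ti{\Cal}_2$, hence of $\ti{\Cal}_3$, completing the proof.
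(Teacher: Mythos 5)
Your overall route — reducing via the identity $\ti\Cal_2(\ti f)=\Cal_1(f)+\ti\rho(\tphi)$ (which is legitimate, since that identity is proved in the paper independently of any continuity statement) and then attacking $\Cal_1$ directly — is genuinely different from the paper's, but it has a real gap at its crucial step, the control of the normalizing constant $\int_{\Ss^1}P_f\,d\mu_f$. Your quantifier order, ``first choose $n$ large (controlling the tail) and then $d_1(f,g)$ small,'' does not do what you need: fixing $n$ using $f$ makes $\frac1n\sum_{k<n}P_g(g^k(z))$ close to $\frac1n\sum_{k<n}P_f(f^k(z))$, but nothing makes $\frac1n\sum_{k<n}P_g(g^k(z))$ close to its own limit $\int_{\Ss^1}P_g\,d\mu_g$ at that same $n$. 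Birkhoff averages of a general continuous observable admit no convergence rate uniform over nearby maps, and the invariant measures on $\Ss^1$ vary only upper semicontinuously (take $f|_{\Ss^1}$ an irrational rotation approximated by maps $g$ with attracting periodic orbits: $\mu_g$ concentrates far from Lebesgue, and the time needed for $g$-averages to settle blows up). So the exchange of limits is unjustified as stated. A smaller error: $\ti\rho$ is \emph{not} $1$-Lipschitz for the uniform norm — by mode-locking, an $\epsilon$-perturbation of a circle map with a parabolic periodic orbit moves the rotation number by order $\sqrt\epsilon$ — though $\ti\rho$ is continuous, which is all you actually use.

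The gap is reparable precisely because $P_f|_{\Ss^1}$ is not a general observable: since $\lambda|_{\Ss^1}$ is the angular form, $(f^*\lambda-\lambda)|_{\Ss^1}=d\delta$ where $\delta$ is the displacement function of a lift $\tphi$ of $f|_{\Ss^1}$, so $P_f|_{\Ss^1}=\delta+c_f$ with $c_f$ depending continuously on $f$. The Birkhoff sums of $\delta$ telescope, $\sum_{k=0}^{n-1}\delta(\phi^k(x))=\tphi^n(\ti x)-\ti x$, and the universal bound $|\tphi^n(\ti x)-\ti x-n\ti\rho(\tphi)|<1$ gives a tail estimate $1/n$ uniform in $x$ \emph{and} in $f$; hence $\int_{\Ss^1}P_f\,d\mu_f=\ti\rho(\tphi)+c_f$ and your normalizing constant is continuous because $\ti\rho$ is. Note, however, that this repair essentially re-derives the paper's computation $A_1(z)=\delta(z)+C$ with $C=-\ti\rho(\tphi)$, i.e.\ you end up proving the identity $\ti\Cal_2=\Cal_1+\ti\rho$ in reverse. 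The paper's own proof avoids all of this: since $\ti\Cal_2$ is a morphism, continuity at $\ti\id$ suffices; writing $f=\id+h$ with $\ti d_1(\ti f,\ti\id)\le\epsilon$, a mean-value estimate gives $|\cos(2\pi\Ang_{\ti f}(x,y))-1|\le2\epsilon$ for every $(x,y)$, connectedness of $\D^2\backslash\Delta$ together with the normalization of the lift pins the integer ambiguity to $0$, so $\|\Ang_{\ti f}\|_\infty=O(\sqrt\epsilon)$, and integrating yields $|\ti\Cal_2(\ti f)|=O(\sqrt\epsilon)$; the continuity of $\Cal_1$ is then deduced as a corollary via the very identity you started from, used in the opposite direction.
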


We need some results about the angle function.
\begin{lem}\label{lemAngC0}
Let us consider $\ti f=(f,\tphi)\in\Diff_+^1(\D)$ such that $\ti d_1(\ti f,\ti\id)\leq\epsilon\leq 1/2$, then for every $(x,y)\in\D^2\backslash\Delta$, it holds that 
$$|\cos(2\pi\Ang_{\ti f}(x,y))-1|\leq 2\epsilon.$$
\end{lem}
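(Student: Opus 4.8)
The plan is to collapse the dynamical quantity $\Ang_{\ti f}(x,y)$, which a priori depends on the whole isotopy representing $\ti f$, into a purely static comparison between the separation vector $x-y$ and its image $f(x)-f(y)$, and then to run elementary planar estimates. Write $a=x-y$ and $b=f(x)-f(y)$; both are nonzero since $(x,y)\notin\Delta$ and $f$ is injective. Recall that $2\pi\Ang_{\ti f}(x,y)=\int_\gamma d\theta$ with $\gamma(t)=f_t(x)-f_t(y)$ for an isotopy $(f_t)_\tin$ representing $\ti f$, so this integral is the total continuous variation of the angular coordinate along $\gamma$, a path from $a$ to $b$ in $\R^2\setminus\{0\}$. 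Hence $2\pi\Ang_{\ti f}(x,y)$ is congruent modulo $2\pi$ to the net angle from $a$ to $b$, and since $\cos$ is $2\pi$-periodic,
$$\cos(2\pi\Ang_{\ti f}(x,y))=\frac{\langle a,b\rangle}{|a|\,|b|}.$$
In particular this value is independent of the chosen lift, consistent with the fact that only $\cos(2\pi\Ang_{\ti f})$, and not $\Ang_{\ti f}$ itself, appears in the statement.

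First I would establish the $C^1$ estimate $|b-a|\le\epsilon|a|$. By the fundamental theorem of calculus along the segment $s\mapsto y+s(x-y)$,
$$b-a=\int_0^1\big(Df(y+s(x-y))-\id\big)(x-y)\,ds,$$
so $|b-a|\le\|Df-\id\|_\infty\,|a|\le\epsilon|a|$, using that $\ti d_1(\ti f,\ti\id)\le\epsilon$ controls $\|Df-D\id\|_\infty=\|Df-\id\|_\infty$. Note that only the $f$-part of $\ti d_1$ is needed here.

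Next I would convert this into an angle bound. From $|b-a|\le\epsilon|a|$ I get $|b|\ge(1-\epsilon)|a|>0$ and $\langle a,b\rangle=|a|^2+\langle a,b-a\rangle\ge(1-\epsilon)|a|^2>0$, so the unsigned angle $\phi$ between $a$ and $b$ in fact lies in $[0,\pi/2)$. Decomposing $b$ into its components parallel and perpendicular to $a$, the perpendicular component of $b$ equals that of $b-a$, whence $|b|\sin\phi\le|b-a|\le\epsilon|a|$ and therefore $\sin\phi\le\tfrac{\epsilon|a|}{(1-\epsilon)|a|}=\tfrac{\epsilon}{1-\epsilon}\le2\epsilon$, the last inequality using $\epsilon\le1/2$. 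Finally, for $\phi\in[0,\pi/2]$ one has $1-\cos\phi=\tan(\phi/2)\sin\phi\le\sin\phi$, so
$$|\cos(2\pi\Ang_{\ti f}(x,y))-1|=1-\cos\phi\le\sin\phi\le2\epsilon,$$
which is the claim.

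The only genuinely delicate step is the first reduction: one must argue cleanly that the total angular variation along $\gamma$ agrees modulo $2\pi$ with the net angle determined by its endpoints $a$ and $b$, so that applying $\cos$ erases all dependence on the isotopy and leaves the static ratio $\langle a,b\rangle/(|a|\,|b|)$. Everything afterwards is elementary Euclidean geometry, and the hypothesis $\epsilon\le1/2$ is used exactly twice: to keep $\langle a,b\rangle>0$ (so that $\phi<\pi/2$ and $1-\cos\phi\le\sin\phi$ applies) and to bound $\epsilon/(1-\epsilon)$ by $2\epsilon$.
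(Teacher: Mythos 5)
Your proof is correct, and its skeleton is the same as the paper's: reduce the isotopy-dependent quantity $\Ang_{\ti f}(x,y)$ to a static comparison of $a=x-y$ with $b=f(x)-f(y)$, then feed in the derivative bound $|b-a|\leq\epsilon|a|$ (the paper gets this by the mean value theorem, you by the equivalent fundamental-theorem integral along the segment, which stays in $\D$ by convexity). Where you diverge is in the two surrounding steps, and the comparison is instructive. First, you carefully justify the identity $\cos(2\pi\Ang_{\ti f}(x,y))=\langle a,b\rangle/(|a|\,|b|)$ via the lifting argument (the angular variation along $\gamma$ agrees mod $2\pi$ with the net angle between the endpoints, and $\cos$ kills the ambiguity); the paper simply asserts this formula, so your write-up fills a genuine, if minor, gap in exposition. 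Second, your conversion of $|b-a|\leq\epsilon|a|$ into the cosine bound goes through the perpendicular component: $\langle a,b\rangle\geq(1-\epsilon)|a|^2>0$ puts the unsigned angle $\phi$ in $[0,\pi/2)$, then $\sin\phi\leq\epsilon/(1-\epsilon)\leq2\epsilon$ and $1-\cos\phi\leq\sin\phi$. The paper instead uses Cauchy--Schwarz to bound $1-\cos\phi$ by $\bigl|\frac{b}{|b|}-\frac{a}{|a|}\bigr|$ and splits that difference by the triangle inequality (together with $\bigl||a|-|b|\bigr|\leq|b-a|$) to get $1-\cos\phi\leq 2|b-a|/|a|\leq2\epsilon$ directly. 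The trade-off: the paper's route never uses $\epsilon\leq1/2$, so its $2\epsilon$ bound holds for all $\epsilon$, whereas your route needs $\epsilon\leq1/2$ twice (to keep $\phi<\pi/2$ so that $1-\cos\phi\leq\sin\phi$ applies, and to bound $\epsilon/(1-\epsilon)$ by $2\epsilon$); on the other hand your $\sin\phi\leq\epsilon/(1-\epsilon)$ is a sharper intermediate estimate for small $\epsilon$. Both arguments yield the same constant and both suffice for the corollary the paper derives from this lemma.
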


\begin{proof}[Proof of Lemma \ref{lemAngC0}]
The proof is a simple computation. Let us consider $x,y\in\D$ such that $x\neq y$. One can write $f=\id+h$ where $||h||_\infty\leq \epsilon$ and $||Dh||_\infty\leq\epsilon$. By the mean theorem we have 
\begin{equation}\label{epsilon}
\left|\frac{h(y)-h(x)}{y-x}\right|\leq\epsilon .
\end{equation}
We have 
$$\cos(2\pi\Ang_{\ti f}(x,y))=\left\langle \frac{f(y)-f(x)}{|f(y)-f(x)|} \big| \frac{y-x}{|y-x|}\right\rangle,$$
where $\langle . | . \rangle$ is the canonical scalar product on $\R^2$. We compute
\begin{align*}
|\cos(2\pi\Ang_{\ti f}(x,y))-1|&=\left|\left\langle \frac{f(y)-f(x)}{|f(y)-f(x)|}-\frac{y-x}{|y-x|} \big| \frac{y-x}{|y-x|}\right\rangle\right|\\
&\leq \left| \frac{f(y)-f(x)}{|f(y)-f(x)|}-\frac{y-x}{|y-x|} \right|.
\end{align*}

We compute 
\begin{align*}
\left| \frac{f(y)-f(x)}{|f(y)-f(x)|}-\frac{y-x}{|y-x|} \right|&\leq \left| \frac{f(y)-f(x)-(y-x)}{|y-x|}\right|+|f(y)-f(x)|\left| \frac{1}{|f(y)-f(x)|}- \frac{1}{|y-x|}\right| \\
&\leq\left| \frac{h(y)-h(x)}{|y-x|}\right|+\left|\frac{|y-x|-|f(y)-f(x)|}{|y-x|}\right|\\
&\leq 2\left| \frac{h(y)-h(x)}{y-x}\right|\\
&\leq2\epsilon.
\end{align*}
\end{proof}

From Lemma \ref{lemAngC0}, we deduce the following result.

\begin{coro}\label{AngC0}
Let us consider $\ti f\in \ti{\Diff}^1_\omega(\D)$ such that $d_1(\ti f,\ti \id)\leq\epsilon\leq 1/2$. The angle function satisfies 
$$||\Ang_{\ti f}||_\infty\leq \sqrt\epsilon/\pi.$$
\end{coro}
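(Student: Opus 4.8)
The plan is to push the cosine estimate of Lemma~\ref{lemAngC0} through an elementary trigonometric identity and then to remove the resulting integer ambiguity by a connectedness argument. The genuinely delicate point will be the second step, not the first.

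First I would rewrite the conclusion of Lemma~\ref{lemAngC0}. Applying the identity $1-\cos(u)=2\sin^2(u/2)$ with $u=2\pi\Ang_{\ti f}(x,y)$ gives
$$2\sin^2\big(\pi\Ang_{\ti f}(x,y)\big)=\big|\cos(2\pi\Ang_{\ti f}(x,y))-1\big|\leq 2\epsilon,$$
so that $|\sin(\pi\Ang_{\ti f}(x,y))|\leq\sqrt\epsilon$ for every $(x,y)\in\D^2\backslash\Delta$. Since $\epsilon\leq 1/2$ we have $\sqrt\epsilon<1$, hence each value $\Ang_{\ti f}(x,y)$ lies within $\tfrac1\pi\arcsin\sqrt\epsilon$ of some integer and, crucially, never equals a half-integer, where $|\sin(\pi\cdot)|$ would equal $1$.

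The main obstacle is that, a priori, the nearest integer could depend on the pair $(x,y)$: the estimate above controls $\Ang_{\ti f}$ only modulo $\Z$, whereas $\Ang_{\ti f}$ is a genuine real-valued angle variation. To pin it down I would use that $\Ang_{\ti f}$ extends to a continuous function on the compact space $K$ recalled above, and that $K$ is connected, being the closure of the connected set $\D^2\backslash\Delta$. As $\Ang_{\ti f}$ is continuous and never meets a half-integer, the nearest-integer map is locally constant, hence equal to a single integer $n$ on all of $K$. It then remains to show $n=0$, which I would establish by evaluating $\Ang_{\ti f}$ on a pair of boundary points, using that $\Ang_{\ti f}$ does not depend on the isotopy chosen within its class: I would take the isotopy whose restriction to $\Ss^1$ is the geodesic homotopy in the universal cover from $\id_\R$ to $\tphi$, which stays $\epsilon$-close to $\id_\R$ because $\ti d_1(\ti f,\ti\id)\leq\epsilon$ forces $\|\tphi-\id_\R\|_\infty\leq\epsilon$. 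Along this isotopy each boundary point moves by less than $\epsilon<1/2$ of a full turn, so the chord joining two boundary points has total angle variation of order $\epsilon$, strictly less than $1/2$; hence the nearest integer there is $0$, and therefore $n=0$ throughout $K$.

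With $n=0$ in hand the argument concludes by a routine estimate: the confinement of $\Ang_{\ti f}(x,y)$ to a small neighborhood of $0$ together with $|\sin(\pi\Ang_{\ti f}(x,y))|\leq\sqrt\epsilon$ gives $|\Ang_{\ti f}(x,y)|\leq\tfrac1\pi\arcsin\sqrt\epsilon$, which is of the order $\sqrt\epsilon$ asserted in the statement. I expect the trigonometric reduction and this final estimate to be entirely mechanical; the one step requiring care is the topological passage from the mod-$\Z$ control furnished by Lemma~\ref{lemAngC0} to an honest bound on the real-valued angle function, i.e. the proof that the global winding integer $n$ vanishes.
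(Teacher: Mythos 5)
Your proposal is correct and follows the same skeleton as the paper's proof: a pointwise estimate deduced from Lemma \ref{lemAngC0} placing each value $\Ang_{\ti f}(x,y)$ within $O(\sqrt\epsilon)$ of an integer, a connectedness argument showing that integer is independent of $(x,y)$, and an identification of that integer as $0$. The differences are mostly cosmetic, with one substantive improvement. Where the paper uses $\arccos(1-x)\leq 2\sqrt{x}$ you use $1-\cos u=2\sin^2(u/2)$; note that neither route actually yields the stated constant $\sqrt\epsilon/\pi$ — the paper's own computation only gives $\sqrt{2\epsilon}/\pi$ and yours gives $\arcsin(\sqrt\epsilon)/\pi\leq\sqrt{2\epsilon}/\pi$ — a harmless discrepancy since only the order $\sqrt\epsilon$ is used later for the continuity of $\ti\Cal$. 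For constancy of the integer, the paper proves path-connectedness of $\D^2\backslash\Delta$ by an explicit construction of paths, while you invoke connectedness of the compactification $K$ on which $\Ang_{\ti f}$ extends continuously; both work, though your claim that $\D^2\backslash\Delta$ is connected is exactly what the paper's explicit construction establishes, so you should not treat it as free. The step where you genuinely add content is $n=0$: the paper merely asserts that $\ti d_1(\ti f,\ti\id)\leq\epsilon\leq 1/2$ implies $k=0$, whereas you derive it from $\|\tphi-\id_\R\|_\infty\leq\epsilon$ by evaluating at a boundary pair. Your appeal to a disk isotopy realizing the geodesic homotopy on $\Ss^1$ is left unjustified (one would need that every circle isotopy in the correct homotopy class is the boundary restriction of some disk isotopy from $\id$ to $f$), but this detour is avoidable: for $x,y\in\Ss^1$ and \emph{any} isotopy in the class, the points stay on $\Ss^1$, the chord $f_t(y)-f_t(x)$ never vanishes, and writing $e^{i\beta}-e^{i\alpha}=2i\sin\bigl(\frac{\beta-\alpha}{2}\bigr)e^{i(\alpha+\beta)/2}$ shows its direction is, up to an additive constant, the average of the two lifted boundary trajectories, whence $\Ang_{\ti f}(x,y)=\frac{1}{2}\bigl((\tphi(\ti x)-\ti x)+(\tphi(\ti y)-\ti y)\bigr)$, of modulus at most $\epsilon\leq 1/2$; combined with your pinning of $\Ang_{\ti f}$ to within $\arcsin(\sqrt\epsilon)/\pi\leq 1/4$ of $n$, this forces $|n|<1$ and hence $n=0$.
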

 
\begin{proof}
For every couple $(x,y)\in\D^2\backslash\Delta$ there exists a unique $k\in\Z$ such that $\Ang_{\ti f}(x,y)-k\in[-1/2,1/2)$. So by Lemma \ref{lemAngC0} we have
$$1\geq \cos(|2\pi\Ang_{\ti f}(x,y)-k|)\geq 1-2\epsilon\geq 0.$$
The function arccos is decreasing so we obtain 
$$0\leq \arccos(\cos(|2\pi\Ang_{\ti f}(x,y)-k|))\leq \arccos(1-2\epsilon).$$
Moreover the function arccos is defined on $[0,1]$ and of class $C^1$ on $[0,1)$ such that for every $x\in(0,1]$ we have
$$(\arccos(1-x))'=\frac{1}{\sqrt{2x-x^2}}\leq \frac{1}{\sqrt x}.$$
We obtain that for every $x\in[0,1]$ we have 
$$\arccos(1-x)\leq 2\sqrt{x}.$$
Hence we have 
$$2\pi|\Ang_{\ti f}(x,y)-k|\leq 2\sqrt{2\epsilon}.$$
And so
$$|\Ang_{\ti f}(x,y)-k|\leq \frac{\sqrt{2\epsilon}}{\pi}<1/2.$$

Moreover $\D^2\backslash\Delta$ is path connected. Indeed, let us prove that every couple $(x,y)\in \D^2\backslash \Delta$ is connected to $((0,0),(1,0))$ by a path as follows. We set $d$ the line of $\D^2$ passing through $x$ and $y$. The line $d$ intersects $\Ss^1$ in two points which we denote $\hat{x}$ and $\hat y$ such that $\hat x$ is closer to $x$ than $y$ and $\hat y$ is closer to $y$ than $x$ as in figure \ref{connect}

\begin{figure}[htp]
 \begin{center}
 \begin{tikzpicture}
 
 \draw (0,0) circle (1) ;
 \draw (0.3,0.3) node{$\bullet$};
 \draw[below] (0.3,0.3) node{$x$};
  \draw (-0.3,0.3) node{$\bullet$};
    \draw[below] (-0.3,0.3) node{$y$};
   \draw (0.95,0.3) node{$\bullet$};
   \draw[right] (0.95,0.3) node{$\hat x$};
      \draw (-0.95,0.3) node{$\bullet$};
   \draw[left] (-0.95,0.3) node{$\hat y$};
   
   \draw (-0.95,0.3) -- (0.95,0.3);
   \draw[below] (-0.7,0.3) node{$d$};
 \end{tikzpicture}
\end{center}
\caption{ }
\label{connect}
\end{figure}

Let us consider the path $\gamma_y:[0,1]\ra \D$ defined by $\gamma_y(t)=t(\hat y-y)+y$ from $y$ to $\hat y$. The path $\Gamma_y: t\ra (x,\gamma_y(t))$ defined on $[0,1]$ sends the couple $(x,y)$ to $(x,\hat y)$.

Let us consider the path $\gamma_x:[0,1]\ra \D$ defined by $\gamma_x(t)=(1-t)x$ from $x$ to $(0,0)$. The path $\Gamma_x: t\ra (\gamma_x(t),\hat y)$ defined on $[0,1]$ sends the couple $(x,\hat y)$ to $(0,\hat y)$.

Now we consider $R_\alpha$ the rotation of $\D$ of angle $\alpha=arg(\hat y)$. The rotation $\R_\alpha^{-1}$ sends $\hat y$ to $(1,0)$. We denote $(R_t)_{\tin}$ the isotopy from $\id$ to $R_\alpha$ such that for every $\tin$ $R_t$ is the rotation of angle $t\alpha$.

Hence the composition of the path $\Gamma_y$, $\Gamma_x$ and $t\ra((0,0),R_t^{-1}(\hat y))$ sends $(x,y)$ to $((0,0),(0,1))$.\\

Moreover $\Ang_{\ti f}$ is continuous on $\D^2\backslash \Delta$ we deduce from the last inequality that $k$ does not depend on the choice of $(x,y)$. The fact that $\ti d_1(\ti f,\ti\id)\leq \epsilon\leq1/2$ implies that $k=0$ and we obtain that for every $(x,y)\in \D^2\backslash \Delta$
$$|\Ang_{\ti f}(x,y)|\leq \sqrt\epsilon/\pi.$$
\end{proof}

We now prove the continuity of $\Cal_1$ for the $C^1$ topology.
\begin{proof}[Proof of theorem \ref{calabiC0}]
By Theorem \ref{Cal2} we know that $\ti{\Cal}$ is a group morphism. So it is sufficient to prove the continuity at the identity. Let us consider $\ti f=(f,\tphi)\in \ti{\Diff}_\omega^1(\D)$ such that $\ti d_1(\ti f,\ti\id)\leq\epsilon\leq 1/2$. By Corollary \ref{AngC0} we have for every couple $(x,y)\in\D^2\backslash\Delta$
$$|\Ang_{\ti f}(x,y)|\leq \sqrt\epsilon/\pi.$$
By integration on $\D^2\backslash\Delta$ we obtain that
$$|\ti\Cal(\ti f)|\leq \frac{\sqrt{2\epsilon}}{\pi}.$$
Hence $\ti\Cal$ is continuous at the identity.
\end{proof}

Moreover, it is well-known that the rotation number $\ti\rho:\ti\Homeo^+(\Ss^1)\ra \R$ is continuous and we deduce from Theorem \ref{equality} the following corollary.

\begin{coro}\label{ContinuityCAL1}
The map $\Cal_1:\Diff^1_\omega(\D)\ra\R$ is continuous in the $C^1$ topology.
\end{coro}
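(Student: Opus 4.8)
The plan is to read off $\Cal_1$ as the difference of the two quantities whose continuity has just been established. By Theorem \ref{equality}, for any lift $\ti f=(f,\tphi)\in\ti\Diff^1_\omega(\D)$ of $f$ one has
$$\Cal_1(f)=\ti{\Cal}_2(\ti f)-\ti{\rho}(\tphi).$$
The right-hand side decomposes $\Cal_1$ into the angle-integral term $\ti{\Cal}_2$, whose continuity in the $C^1$ topology is exactly Theorem \ref{calabiC0}, and the rotation-number term $\ti{\rho}(\tphi)$, which is continuous in the uniform topology by the classical properties of $\ti\rho$ recalled in Section \ref{preliminaries}. Both terms change by the same integer $k$ under a change of lift $\tphi\mapsto\tphi+k$, so their difference genuinely depends on $f$ alone, consistent with $\Cal_1$ being defined on $\Diff^1_\omega(\D)$. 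Note that, unlike $\ti{\Cal}_2$, the map $\Cal_1$ is only a quasi-morphism, so continuity at the identity does not suffice and I must argue continuity at an arbitrary point $f$.

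First I would fix $f\in\Diff^1_\omega(\D)$ together with a sequence $(f_n)$ satisfying $d_1(f_n,f)\to 0$, and reduce the statement to producing convergent lifts. Restricting to the boundary yields $f_n|_{\Ss^1}\to f|_{\Ss^1}$ in the $C^1$ topology of $\Diff^1_+(\Ss^1)$. Fixing one lift $\tphi$ of $f|_{\Ss^1}$, for $n$ large there is a unique lift $\tphi_n$ of $f_n|_{\Ss^1}$ with $\|\tphi_n-\tphi\|_\infty<1/2$, the remaining lifts differing from it by nonzero integer translations. Setting $\ti f=(f,\tphi)$ and $\ti f_n=(f_n,\tphi_n)$, this construction gives $\ti d_1(\ti f_n,\ti f)\to 0$ in the universal cover.

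With the lifts in hand the conclusion is immediate: Theorem \ref{calabiC0} gives $\ti{\Cal}_2(\ti f_n)\to\ti{\Cal}_2(\ti f)$, continuity of $\ti\rho$ in the uniform topology gives $\ti{\rho}(\tphi_n)\to\ti{\rho}(\tphi)$, and subtracting the two convergences yields $\Cal_1(f_n)\to\Cal_1(f)$. Since $f$ was arbitrary, this proves continuity of $\Cal_1$ on all of $\Diff^1_\omega(\D)$.

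The only nontrivial point is the lifting step: one must check that a $C^1$-small perturbation of $f$ can be lifted to a $\ti d_1$-small perturbation of $\ti f$, that is, that the covering projection $\ti{\pi}:\ti\Diff^1_\omega(\D)\to\Diff^1_\omega(\D)$ admits continuous local sections for the $C^1$ topology. Concretely this is the statement that the unique lift $\tphi_n$ of $f_n|_{\Ss^1}$ lying within distance $1/2$ of $\tphi$ exists once $f_n|_{\Ss^1}$ is uniformly close to $f|_{\Ss^1}$, which controls the discrete integer ambiguity of lifts. Everything else is a direct transport of the continuity already established upstairs, so I expect this (mild) lifting point to be the main obstacle.
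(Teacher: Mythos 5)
Your proposal is correct and follows exactly the paper's route: the paper deduces the corollary from the identity $\ti{\Cal}_2(\ti f)=\Cal_1(f)+\ti{\rho}(\tphi)$ of Theorem \ref{equality}, the $C^1$-continuity of $\ti{\Cal}_2$ (Theorem \ref{calabiC0}), and the classical continuity of $\ti\rho$. The only difference is that you spell out the lifting step (existence of a unique nearby lift $\tphi_n$ of $f_n|_{\Ss^1}$, i.e.\ a continuous local section of the covering $\ti{\Diff}^1_\omega(\D)\ra\Diff^1_\omega(\D)$), which the paper leaves implicit, and that detail is handled correctly.
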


Let us prove that the Calabi is not continuous in the $C^0$ topology.

\begin{prop}\label{notcontinuous}
The morphism $\ti\Cal$ is not continuous in the $C^0$ topology.
\end{prop}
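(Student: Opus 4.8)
The plan is to produce a sequence $(\ti f_n)_{n\ge 1}$ in $\ti\Diff^1_\omega(\D)$ converging to $\ti\id$ for the $C^0$ distance $\ti d_0$ along which $\ti\Cal$ stays constant and nonzero; since $\ti\Cal(\ti\id)=0$, this rules out continuity. The natural candidates are time-one maps of autonomous radial Hamiltonians whose support shrinks to the origin, exploiting the fact (read off the definition $\ti\Cal=\ti\Cal_3$) that a radial Hamiltonian can have a large integral against $\omega$ while moving points only along circles of small radius.

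Concretely, I would fix a smooth $\chi:[0,1]\ra\R$ supported in $[0,1)$ with $c:=\int_0^1\chi(s)\,s\,ds>0$, and set $H^{(n)}(r)=n^2\chi(nr)$ in the radial coordinate. Then $H^{(n)}$ is supported in the disk of radius $1/n$, hence vanishes near $\Ss^1$, and its Hamiltonian flow defines $f_n\in\Diff^1_\omega(\D)$ equal to the identity near the boundary; the natural lift is $\ti f_n=(f_n,\id_\R)$ with $\ti\rho(\id_\R)=0$. The two computations to carry out are that $\ti d_0(\ti f_n,\ti\id)\to 0$ and that $\ti\Cal(\ti f_n)$ does not tend to $0$.

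For the second, using $\ti\Cal=\ti\Cal_3$ and $\omega=\frac1\pi\,r\,dr\,d\theta$, an elementary change of variables $s=nr$ gives $\ti\Cal(\ti f_n)=\int_\D H^{(n)}\,\omega=2\int_0^{1/n}n^2\chi(nr)\,r\,dr=2c$ for every $n$. For the first, the key point is that a radial Hamiltonian $H(r)$ generates the vector field $X=-\frac{\pi H'(r)}{r}\,\partial_\theta$, so its flow preserves each circle centred at the origin. Consequently every point of radius $r\le 1/n$ is displaced along its own circle by at most the diameter $2r\le 2/n$, and the same holds for $f_n^{-1}$; hence $\ti d_0(\ti f_n,\ti\id)=d_0(f_n,\id)\le 2/n\to 0$.

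I expect the main obstacle to be psychological rather than technical: one must notice that the $C^0$ displacement is controlled by the radius of the support alone and is completely insensitive to the size of $H$, which is forced to be of order $n^2$ in order to keep the integral constant on a disk of area of order $n^{-2}$. Granting this, the sequence $\ti f_n\to\ti\id$ in the $C^0$ topology satisfies $\ti\Cal(\ti f_n)\equiv 2c\neq 0=\ti\Cal(\ti\id)$, so $\ti\Cal$ is not continuous for $\ti d_0$, as claimed.
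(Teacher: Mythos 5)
Your proposal is correct and is essentially the paper's own argument: the paper likewise takes autonomous radial Hamiltonians $H_n(z)=h_n(|z|)$ supported in the disk of radius $1/n$ with fixed integral, so that $\ti\Cal((\phi_n^1,\id))$ is a nonzero constant while $\phi_n^1\to\id$ in $C^0$ (the paper quotes the Gambaudo--Ghys formula for $\Cal(\phi^t)$ where you compute $\ti\Cal_3$ directly, and you prove the $C^0$ convergence via circle-preservation where the paper asserts it). One small repair: require $\chi$ to be constant near $0$, as the paper does for $h_n$; otherwise the vector field $\pi n^3\chi'(n r)\,r^{-1}(v,-u)$ is discontinuous at the origin whenever $\chi'(0)\neq 0$, and $f_n$ need not lie in $\Diff^1_\omega(\D)$ --- with that condition added, the argument is complete.
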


We give a counterexample which also prove that the Calabi invariant defined in the introduction is also not continuous in the $C^0$ topology, this counterexample can be find in \cite{GHYS}

\begin{proof}
Let us consider a sequence $(h_n)_{n\geq 1}$ of smooth functions $h_n:[0,1]\ra \R$ such that 
\begin{enumerate}
\item $h_n$ is constant near the origin,
\item $h_n(r)$ is zero for $r>1/n$,
\item $\int_0^1h_n(r)2\pi rdr=1$.
\end{enumerate}

We consider the Hamiltonian functions $H_n:\D\ra\R$ by $H_n(z)=h_n(|z|)$. Each function $H_n$ defines a time independent vector field $X_n$, whose induced flow is denoted $\phi_n^t$. We have the following property \cite{GHYS} about the computation of the Calabi invariant for compactly supported and autonomous Hamiltonian function  
\begin{prop}
Let us consider $H:\D\ra\R$ a Hamiltonian function with compact support. We denote $\phi^t$ the induced Hamiltonian flow and we have 
$$\Cal(\phi^t)=-2\pi t\int_\D H(z)\omega(z),$$
where $\Cal$ is the Calabi invariant defined by equation \ref{Calintro}.
\end{prop}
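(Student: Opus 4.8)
The plan is to realize the time-$t$ map $\phi^t$ as the time-one map of a reparametrized isotopy whose generating Hamiltonian is an explicit constant multiple of $H$, and then to feed this Hamiltonian into the definition (\ref{Calintro}). Since $H$ is autonomous and compactly supported in $\mathring{\D}$, its flow $(\phi^s)_{s\in\R}$ is a one-parameter group of compactly supported Hamiltonian diffeomorphisms and $\int_\D H\,\omega$ is finite, so every object below is well defined.

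First I would reparametrize time. For fixed $t$, set $\psi_s=\phi^{ts}$ for $s\in[0,1]$; this is a Hamiltonian isotopy from $\id$ to $\phi^t$. Differentiating in $s$ gives $\frac{d}{ds}\psi_s = t\,X(\psi_s)$, where $X$ is the vector field of $H$ determined by $dH=\omega(X,\cdot)$. Hence $(\psi_s)_{s\in[0,1]}$ is generated by the vector field $tX$, and its Hamiltonian is exactly the time-independent function $tH$, since $d(tH)=t\,dH=\omega(tX,\cdot)$ by linearity (no additive constant appears because the function is compactly supported).

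Next I would substitute this Hamiltonian into (\ref{Calintro}). Because the integrand $tH$ does not depend on $s$, the $s$-integration is trivial and contributes a single factor $t$, so that with $n=1$ one finds
\[
\Cal(\phi^t)=(n+1)\int_0^1\int_\D tH\,\omega\,ds = 2t\int_\D H\,\omega .
\]
The same linear dependence can be seen intrinsically from the fact that $\Cal$ is a morphism: $s\mapsto\Cal(\phi^s)$ is then additive, hence (by continuity) linear in $s$, which already forces $\Cal(\phi^t)=t\,\Cal(\phi^1)$ and reduces the problem to a single coefficient; this serves as a sanity check on the proportionality to $t$.

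The only genuinely delicate point is matching the numerical constant with the stated coefficient $-2\pi t$, and this is a matter of conventions rather than analysis. The sign reflects the orientation convention for the Hamiltonian vector field used in \cite{GHYS} (opposite to $dH=\omega(X,\cdot)$, which flips the flow and hence the sign of $\Cal$), while the factor $\pi$ comes from evaluating the Calabi integral in \cite{GHYS} against the unnormalized area form $du\wedge dv=\pi\,\omega$ and then re-expressing the answer in terms of the normalized form $\omega$. I expect this constant-tracking (the sign, the normalization $du\wedge dv=\pi\,\omega$, and the $(n+1)$ factor) to be the main place where care is required; the structural content of the argument is simply the time reparametrization together with the triviality of the $s$-integral for an autonomous Hamiltonian.
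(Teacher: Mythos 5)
The first thing to note is that the paper does not prove this proposition at all: it is quoted from \cite{GHYS} as a known property, so there is no internal argument to compare yours against. Your reparametrization argument is the standard proof and is structurally sound: for fixed $t$ the isotopy $\psi_s=\phi^{ts}$, $s\in[0,1]$, joins $\id$ to $\phi^t$ and is generated by the autonomous Hamiltonian $tH$ (the compact-support normalization does fix the additive constant, as you say), and substituting into equation \ref{Calintro} with $n=1$ makes the $s$-integration trivial. The homomorphism-plus-continuity observation is a legitimate sanity check reducing everything to the coefficient of $t$. In this sense your proposal supplies more than the paper does.

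On the constant, you were right to flag it as the delicate point, and in fact the mismatch cannot be derived away within the paper's own conventions: with $dH=\omega(X,\cdot)$ and the normalized form $\omega=\frac{1}{\pi}du\wedge dv$, equation \ref{Calintro} gives exactly your $\Cal(\phi^t)=2t\int_\D H\,\omega$, not $-2\pi t\int_\D H\,\omega$. The stated coefficient is consistent only with the conventions of \cite{GHYS} (opposite sign for the Hamiltonian vector field, integration against the unnormalized area form $du\wedge dv=\pi\omega$), precisely as you diagnosed. This is corroborated by the paper's own application of the proposition: the functions $h_n$ are normalized by $\int_0^1 h_n(r)2\pi r\,dr=1$, which means $\int_\D H_n\,du\,dv=1$ while $\int_\D H_n\,\omega=\frac{1}{\pi}$, and the asserted value $\Cal(\phi_n^1)=-2\pi$ matches the unnormalized reading (the normalized one would give $-2$). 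So your proof is correct modulo exactly the convention mismatch you identified; and since the proposition is only used to produce a sequence with Calabi invariant bounded away from $0$ while $\phi_n^1\ra\id$ in the $C^0$ topology, the ambiguity in the constant is harmless for the paper's purpose.
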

This result allows us to compute the Calabi invariant for $\phi^1_n$ and we obtain for each $n\geq 1$
$$\Cal(\phi^1_n)=-2\pi.$$
For each $n\geq 1$ we consider $(\phi^1_n,\id)\in\ti{\Diff}^1_\omega(\D)$ and we have
$$\ti{\Cal}_2((\phi^1_n,\id))=-2\pi.$$
Moreover, $\phi^1_n$ converges to the identity in the $C^0$ topology and we obtain the result.
\end{proof}

			\section{Computation of $\Cal_1$ in some rigidity cases}\label{computation}
	
In this section, we prove several results about the Calabi invariant of irrational pseudo-rotations.\\

		\subsection{A simple case of $C^1$ rigidity}	
			
Let us begin by the simple computation of the Calabi invariant for periodic symplectic maps.
\begin{lem}\label{periodic}
If $f\in \Diff_\omega^1(\D)$ has a finite order, then we have
$$\Cal_1(f)=0.$$
\end{lem}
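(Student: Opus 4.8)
The plan is to invoke the structural result, obtained as the corollary of Theorem~\ref{equality}, that $\Cal_1 : \Diff^1_\omega(\D) \ra \R$ is a homogeneous quasi-morphism. Once this is granted, the statement reduces to the elementary fact that a homogeneous quasi-morphism vanishes on every torsion element of its domain, so no computation involving the action function, the angle function, or the symplectic structure is needed.

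First I would record that $\Cal_1(\id)=0$. This follows from homogeneity alone: applying the relation $\Cal_1(g^n)=n\,\Cal_1(g)$ to $g=\id$ and $n=2$, and using $\id^2=\id$, gives $\Cal_1(\id)=2\,\Cal_1(\id)$, whence $\Cal_1(\id)=0$. Next, suppose $f\in\Diff^1_\omega(\D)$ has finite order, i.e. there is an integer $n\geq 1$ with $f^n=\id$. Combining the homogeneity property with the previous step yields
$$n\,\Cal_1(f)=\Cal_1(f^n)=\Cal_1(\id)=0.$$
Since $n\geq 1$, dividing by $n$ gives $\Cal_1(f)=0$, which is the desired conclusion.

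I do not expect any substantial obstacle here; the only point requiring attention is that the homogeneity of $\Cal_1$ must genuinely be available at this stage. This is guaranteed by the decomposition $\ti\Cal_2(\ti f)=\Cal_1(f)+\ti\rho(\ti\phi)$ of Theorem~\ref{equality}, which exhibits $\Cal_1$ (after lifting) as the difference of the morphism $\ti\Cal_2$ and the homogeneous quasi-morphism $\ti\rho$, so that $\Cal_1$ inherits homogeneity. Granting this, the argument is a two-line application of the torsion-vanishing principle for quasi-morphisms.
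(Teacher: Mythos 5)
Your proposal is correct and follows essentially the same route as the paper: the paper's proof is exactly the two-line homogeneity argument, writing $f^p=\id$ and using $\Cal_1(f^p)=p\,\Cal_1(f)$ together with $\Cal_1(\id)=0$, where the homogeneous quasi-morphism property of $\Cal_1$ is established, just as you note, from the decomposition $\ti{\Cal}_2(\ti f)=\Cal_1(f)+\ti{\rho}(\ti{\phi})$ of Theorem \ref{equality}. There is nothing to add or repair.
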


\begin{proof}
By assumption there exists $p\geq 1$ such that $f^p=\id$ and so $\Cal_1(f^p)=p \ \Cal_1(\id)=0$.
\end{proof}

We deduce the following properties	
\begin{prop}
Let us consider  $f\in \Diff_\omega^1(\D)$. If there exists a sequence of periodic diffeomorphisms $(g_k)_{k\in\N}$ in $\Diff_\omega^1(\D)$ which converges to $f$ for the $C^1$ topology, then we have
$$\Cal_1(f)=0.$$
\end{prop}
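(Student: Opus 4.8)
The plan is to deduce this directly from the two facts already established: that $\Cal_1$ vanishes on finite-order elements, and that $\Cal_1$ is continuous in the $C^1$ topology. First I would observe that each $g_k$, being a periodic diffeomorphism of $\D$, has finite order, so an immediate application of Lemma \ref{periodic} gives $\Cal_1(g_k)=0$ for every $k\in\N$.

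Next, since the sequence $(g_k)_{k\in\N}$ converges to $f$ in the $C^1$ topology, i.e. $d_1(g_k,f)\ra 0$, and since the map $\Cal_1:\Diff^1_\omega(\D)\ra\R$ is continuous for the $C^1$ topology by Corollary \ref{ContinuityCAL1}, I would simply pass to the limit and conclude
$$\Cal_1(f)=\lim_{k\ra\infty}\Cal_1(g_k)=0.$$

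There is no genuine obstacle here: the statement is a formal corollary of the $C^1$-continuity of $\Cal_1$ together with its vanishing on periodic maps. The only point worth a word of verification is that the $C^1$-convergence $g_k\ra f$ of diffeomorphisms entails convergence for the metric $d_1$, which also controls the inverses $g_k^{-1}$; but this is the standard fact that inversion is continuous in the $C^1$ topology on $\Diff^1(\D)$, so it requires no additional work. One should also make sure that the word \emph{periodic} in the hypothesis is understood as finite order, which is precisely the hypothesis of Lemma \ref{periodic}.
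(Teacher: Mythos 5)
Your proof is correct and is essentially identical to the paper's own argument: both apply Lemma \ref{periodic} to conclude $\Cal_1(g_k)=0$ for each $k$, then pass to the limit using the $C^1$-continuity of $\Cal_1$ (Corollary \ref{ContinuityCAL1}). Your extra remarks on $d_1$ controlling inverses and on ``periodic'' meaning finite order are harmless clarifications, not deviations.
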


\begin{proof}
By Lemma \ref{periodic} for each $n\in \N$ we have $\Cal_1(g_n)=0$ and we obtain the result by the continuity of the map $\Cal_1$ for the $C^1$ topology.
\end{proof}

\begin{prop}\label{C1}
Let us consider  $f\in \Diff_\omega^1(\D)$. If there exists a sequence $(q_k)_{k\in\N}$ such that $f^{q_k}$ converges to the identity in the $C^1$ topology then we have
$$\Cal_1(f)=0.$$
\end{prop}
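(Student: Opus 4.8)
The plan is to combine the two structural properties of $\Cal_1$ established above: that $\Cal_1$ is a homogeneous quasi-morphism, so in particular $\Cal_1(f^n)=n\,\Cal_1(f)$ for every $n\in\Z$, and that $\Cal_1$ is continuous for the $C^1$ topology (Corollary \ref{ContinuityCAL1}). First I would record that $\Cal_1(\id)=0$, which follows from homogeneity since $\Cal_1(\id)=\Cal_1(\id^2)=2\,\Cal_1(\id)$.

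Next, applying homogeneity to each iterate gives the exact identity
$$\Cal_1(f^{q_k})=q_k\,\Cal_1(f).$$
By hypothesis $f^{q_k}\to\id$ in the $C^1$ topology, so the continuity of $\Cal_1$ yields $\Cal_1(f^{q_k})\to\Cal_1(\id)=0$. Combining the two facts, I obtain $q_k\,\Cal_1(f)\to 0$.

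To conclude I would discard the (uninformative) terms with $q_k=0$ and then split into two cases according to the behaviour of the integer sequence $(q_k)$. If $(q_k)$ is unbounded, I pass to a subsequence with $|q_k|\to\infty$; then $q_k\,\Cal_1(f)\to 0$ forces $\Cal_1(f)=0$. If instead $(q_k)$ is bounded, some nonzero value $q$ is attained infinitely often, so along that subsequence $f^{q}=\lim_k f^{q_k}=\id$; hence $f$ has finite order and $\Cal_1(f)=0$ by Lemma \ref{periodic}.

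The argument is essentially immediate once homogeneity and $C^1$-continuity are in hand, so there is no genuine obstacle here; the only point requiring a little care is that the relation $q_k\,\Cal_1(f)\to 0$ does not by itself force $\Cal_1(f)=0$ unless the exponents do not accumulate at a finite nonzero value, which is precisely why the bounded case must be routed through Lemma \ref{periodic} rather than through the limit alone.
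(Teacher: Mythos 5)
Your proof is correct and follows essentially the same route as the paper, which simply combines the homogeneity $\Cal_1(f^{q_k})=q_k\,\Cal_1(f)$ with the $C^1$-continuity of $\Cal_1$ and the fact that $\Cal_1(\id)=0$. Your extra case analysis is harmless but not actually needed: if a nonzero integer $q$ is attained infinitely often by $(q_k)$, then $q\,\Cal_1(f)$ is a constant subsequence of a sequence converging to $0$, so $q\,\Cal_1(f)=0$ and hence $\Cal_1(f)=0$ directly, without invoking Lemma \ref{periodic}.
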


\begin{proof}
We have $\Cal_1(f^{q_k})=q_k\Cal_1(f)$ and $\Cal_1(f^{q_k})$ converges to $\Cal_1(\id)=0$ so $\Cal_1(f)=0$.
\end{proof}

		\subsection{$C^0$-rigidity}

The following theorem is a stronger version of Corollary \ref{C1}. 	
\begin{theorem}\label{C0cal}
Let us consider  $f\in \Diff_\omega^1(\D)$. If there exists a sequence $(q_k)_{k\in\N}$ of integers such that $(f^{q_k})_{k\in\N}$ converges to the identity for the $C^0$ topology then we have
$$\Cal_1(f)=0.$$
\end{theorem}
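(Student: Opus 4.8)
The plan is to reduce the statement to the angle-function definition of $\ti\Cal$ and then to estimate the angle function of the iterates directly, exploiting that only a small neighbourhood of the diagonal can carry a large angle. First I would dispose of the degenerate case: if $(q_k)$ admits a bounded subsequence then some integer $q$ occurs infinitely often, whence $f^q=\lim f^{q_k}=\id$, so $f$ is periodic and $\Cal_1(f)=0$ by Lemma \ref{periodic}; I may therefore assume $q_k\to+\infty$. Write $g_k=f^{q_k}$ and $\delta_k=||g_k-\id||_\infty\to 0$. Since $g_k|_{\Ss^1}\to\id$ in the $C^0$ topology, I choose the lift $\tpsi_k$ of $g_k|_{\Ss^1}$ with $||\tpsi_k-\id||_\infty\to 0$; by continuity of the rotation number in the $C^0$ topology, $\ti\rho(\tpsi_k)\to 0$. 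Setting $\ti g_k=(g_k,\tpsi_k)\in\ti\Diff^1_\omega(\D)$, Theorem \ref{equality} together with the homogeneity of the quasi-morphism $\Cal_1$ gives
$$\ti\Cal(\ti g_k)=\Cal_1(g_k)+\ti\rho(\tpsi_k)=q_k\,\Cal_1(f)+\ti\rho(\tpsi_k),$$
so that $\Cal_1(f)=\lim_k \tfrac1{q_k}\ti\Cal(\ti g_k)$, and it suffices to prove $\tfrac1{q_k}\int_{\D^2\backslash\Delta}\Ang_{\ti g_k}(x,y)\,\omega(x)\,\omega(y)\to 0$.

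I would then split $\D^2\backslash\Delta=E_k\sqcup F_k$, with $E_k=\{|x-y|\geq\rho_k\}$ and $F_k=\{0<|x-y|<\rho_k\}$, calibrated by $\rho_k=\sqrt{\delta_k}\to 0$. On the far region $E_k$ I use that, by definition of $\Ang$ as the variation of the argument of $t\mapsto g_{k,t}(x)-g_{k,t}(y)$ along an isotopy realizing $\tpsi_k$, the class of $\Ang_{\ti g_k}(x,y)$ modulo $\Z$ equals $\tfrac1{2\pi}\big(\arg(g_k(x)-g_k(y))-\arg(x-y)\big)$. Since $|x-y|\geq\rho_k$ and $|g_k(x)-g_k(y)-(x-y)|\leq 2\delta_k$, this fractional part is at most $\tfrac1{2\pi}\arcsin(2\delta_k/\rho_k)<\tfrac12$ for large $k$; hence $\Ang_{\ti g_k}$ stays within this quantity of an integer $N_k(x,y)$ which, being continuous and integer-valued, is constant on the connected set $E_k$. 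Evaluating at a pair of antipodal boundary points and using that the circle isotopy attached to the small lift $\tpsi_k$ has displacement $\leq||\tpsi_k-\id||_\infty\to 0$ (so the difference vector never winds) forces $N_k=0$. Consequently $||\Ang_{\ti g_k}||_{L^\infty(E_k)}\leq\tfrac1{2\pi}\arcsin(2\delta_k/\rho_k)\to 0$.

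On the near region $F_k$ I would instead invoke the cocycle identity for the angle function, $\Ang_{(g_k,\tphi^{q_k})}(x,y)=\sum_{j=0}^{q_k-1}\Ang_{\ti f}(f^j(x),f^j(y))$, together with the boundedness of $\Ang_{\ti f}$, to obtain the uniform pointwise bound $|\Ang_{\ti g_k}|\leq q_k\,||\Ang_{\ti f}||_\infty+|m_k|\leq Cq_k$, where $m_k=q_k\,\ti\rho(\tphi)-\ti\rho(\tpsi_k)$ is the integer relating the two lifts of $g_k$. Since $(\omega\times\omega)(F_k)\leq\rho_k^2$, this region contributes at most $\tfrac1{q_k}\cdot Cq_k\cdot\rho_k^2=C\delta_k\to 0$ after normalization. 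Combining the two estimates,
$$\frac1{q_k}\left|\int_{\D^2\backslash\Delta}\Ang_{\ti g_k}\,\omega\,\omega\right|\leq\frac1{q_k}\cdot\frac{\arcsin(2\sqrt{\delta_k})}{2\pi}+C\delta_k\longrightarrow 0,$$
which yields $\Cal_1(f)=0$.

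The delicate point is precisely the near-diagonal region $F_k$, where the angle function records the winding of the derivative cocycle of $f^{q_k}$ and may genuinely be of size comparable to $q_k$; this is exactly the mechanism responsible for the failure of $C^0$-continuity in Proposition \ref{notcontinuous}, so one cannot hope for a pointwise bound uniform in $k$. The argument survives only because this large angle is confined to a set of measure $O(\rho_k^2)$, which, after dividing by $q_k$, is killed as soon as $\rho_k\to 0$, while the competing requirement $\delta_k/\rho_k\to 0$ on $E_k$ dictates the choice $\rho_k=\sqrt{\delta_k}$. The second subtle ingredient is the vanishing of the integer $N_k$ on the far region, for which using the small boundary lift $\tpsi_k$ rather than $\tphi^{q_k}$ is essential: it is the smallness of the boundary displacement that pins the global winding constant to $0$.
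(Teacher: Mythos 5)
Your proposal is correct and follows essentially the paper's own proof: the same splitting of $\D^2\backslash\Delta$ into near and far regions at the threshold $\sqrt{\delta_k}$, the same argument locking $\Ang_{\ti g_k}$ within $o(1)$ of a single integer on the connected far region, and the same cocycle bound $q_k\,||\Ang_{\ti f}||_\infty$ on the near region, which is harmless because that region has measure $O(\delta_k)$ and one divides by $q_k$. The only (equivalent) difference is bookkeeping: you normalize with the small boundary lift $\tpsi_k$, pin the integer to $0$ at a boundary pair, and invoke homogeneity of $\Cal_1$, whereas the paper keeps the iterated lift $\tphi^{q_k}$ of an isotopy fixing the origin, pins the integer $k_n$ by evaluating the angle function at $(0,y)$ with $y\in\Ss^1$, and deduces $k_n/q_n\to\ti\rho(\tphi)$.
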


To prove the previous statement we will give an estimation of the angle function of $f^{q_n}$ for a given isotopy $I$ from $\id$ to $f$. For that we will consider two cases, the first one if $x$ is \textit{close} to $y$ and the other if $x$ is not \textit{close} to $y$. The following lemma gives us an evaluation of what \textit{close} means.

\begin{lem}\label{closed}
Let us consider $f$ a $C^1$ diffeomorphism of the unit disc $\D$, $I$ an isotopy from $\id$ to $f$. If $d_0(f,\id)\leq\epsilon\leq 1/4$ then for every couple $(x,y)\in\D\times\D$ which satisfies $|x-y|\geq \sqrt{\epsilon},$ we have 
$$|\cos(2\pi\Ang_I(x,y))-1|\leq 4\sqrt{\epsilon}.$$
\end{lem}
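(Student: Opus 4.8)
The plan is to mirror the proof of Lemma \ref{lemAngC0}, replacing the derivative estimate (which is no longer available, since we now control $f$ only in the $C^0$ sense) by the separation hypothesis $|x-y|\geq\sqrt{\epsilon}$. First I would write $f=\id+h$, where the hypothesis $d_0(f,\id)\leq\epsilon$ now yields only the pointwise bound $||h||_\infty\leq\epsilon$. Since $|x-y|\geq\sqrt\epsilon>0$ we have $x\neq y$, and as $f$ is a diffeomorphism this gives $f(x)\neq f(y)$, so that all denominators below are nonzero. As in Lemma \ref{lemAngC0}, the quantity $\cos(2\pi\Ang_I(x,y))$ depends only on $f$ and is given by
$$\cos(2\pi\Ang_I(x,y))=\left\langle \frac{f(y)-f(x)}{|f(y)-f(x)|} \ \Big| \ \frac{y-x}{|y-x|}\right\rangle.$$

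Next, instead of invoking the mean value theorem, I would estimate the difference quotient of $h$ directly: from $|h(y)-h(x)|\leq 2||h||_\infty\leq 2\epsilon$ together with $|x-y|\geq\sqrt\epsilon$ one gets
$$\left|\frac{h(y)-h(x)}{y-x}\right|\leq\frac{2\epsilon}{\sqrt\epsilon}=2\sqrt\epsilon.$$
This is the single place where the separation hypothesis does the work that the $C^1$ bound did in Lemma \ref{lemAngC0}. The hypothesis $\epsilon\leq 1/4$ plays no essential role in the estimates beyond keeping $4\sqrt\epsilon\leq 2$, consistent with the trivial bound $|\cos(\cdot)-1|\leq 2$.

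Finally I would reproduce the Cauchy--Schwarz and triangle-inequality steps of Lemma \ref{lemAngC0}: bound $|\cos(2\pi\Ang_I(x,y))-1|$ by the distance between the two unit vectors $\frac{f(y)-f(x)}{|f(y)-f(x)|}$ and $\frac{y-x}{|y-x|}$, then split that distance into a numerator term and a normalization term, each controlled by $\left|\frac{h(y)-h(x)}{y-x}\right|$ (the normalization term via the reverse triangle inequality $\big||y-x|-|f(y)-f(x)|\big|\leq|h(y)-h(x)|$). This gives
$$|\cos(2\pi\Ang_I(x,y))-1|\leq 2\left|\frac{h(y)-h(x)}{y-x}\right|\leq 4\sqrt\epsilon,$$
as claimed. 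I do not expect any genuine obstacle: the only conceptual point is that dropping the $C^1$ hypothesis forces the restriction to separated pairs $|x-y|\geq\sqrt\epsilon$ and degrades the bound from $2\epsilon$ in Lemma \ref{lemAngC0} to $4\sqrt\epsilon$ here.
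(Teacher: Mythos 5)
Your proposal is correct and follows the paper's own proof essentially step for step: the same decomposition $f=\id+h$, the same bound $\left|\frac{h(y)-h(x)}{y-x}\right|\leq 2\sqrt{\epsilon}$ from $||h||_\infty\leq\epsilon$ and the separation $|x-y|\geq\sqrt{\epsilon}$, the same inner-product formula for $\cos(2\pi\Ang_I(x,y))$, and the same splitting of the unit-vector difference into a numerator term and a normalization term controlled via the reverse triangle inequality. Your extra remarks (explicitly justifying $|h(y)-h(x)|\leq 2\epsilon$, noting $f(x)\neq f(y)$ so denominators are nonzero, and observing that $\epsilon\leq 1/4$ is not essentially used) are harmless refinements of exactly the argument the paper gives.
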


\begin{proof}
Let $(x,y)\in\D\times\D$ be a couple such that $|x-y|\geq \sqrt{\epsilon}$. Once can write $f=\id+h$ where $h:\D\ra\R^2$ satisfies $||h||_\infty\leq \epsilon$ and we have

\begin{equation}\label{epsi}
\left|\frac{h(y)-h(x)}{y-x}\right|\leq2\frac{\epsilon}{\sqrt\epsilon}=2\sqrt{\epsilon}.
\end{equation}
We use the equation
\begin{equation}
\cos(2\pi\Ang_I (x,y))=\frac{\langle f(y)-f(x),y-x\rangle}{|f(y)-f(x)|\ |y-x|}
\end{equation}
Moreover, if we write $1=\langle\frac{y-x}{|y-x|},\frac{y-x}{|y-x|}\rangle$ we obtain 
\begin{equation}\label{cosAng}
\cos(\Ang_I (x,y))-1=\langle \frac{f(y)-f(x)}{|f(y)-f(x)|}-\frac{y-x}{|y-x|},\frac{y-x}{|y-x|}\rangle
\end{equation}
Equation \ref{cosAng} becomes 

\begin{align*}
\left| \frac{f(y)-f(x)}{|f(y)-f(x)|}-\frac{y-x}{|y-x|} \right|&\leq |f(y)-f(x)|\left| \frac{1}{|f(y)-f(x)|}- \frac{1}{|y-x|}\right|  +\left| \frac{f(y)-f(x)-(y-x)}{|y-x|}\right|\\
&=\left|\frac{|y-x|-|f(y)-f(x)|}{|y-x|}\right|+\left| \frac{h(y)-h(x)}{|y-x|}\right|\\
&\leq 2\left| \frac{h(y)-h(x)}{y-x}\right|\\
&\leq4\sqrt\epsilon.
\end{align*}
\end{proof}

We obtain the following lemma.
\begin{lem}\label{closeAng}
Under the same hypothesis, there exists an integer $k\in\Z$, uniquely defined, such that for every couple $(x,y)\in\D\times\D$ such that $|x-y|\geq \sqrt{\epsilon}$, we have
\begin{equation}
|\Ang_I(x,y)- k|\leq 2\sqrt[4\,]{\epsilon}/\pi<1/2.
\end{equation}
\end{lem}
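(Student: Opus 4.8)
Lemma \ref{closeAng} follows from Lemma \ref{closed} by a continuity-and-connectedness argument, exactly parallel to the proof of Corollary \ref{AngC0}. The plan is to first produce a pointwise bound on the distance from $\Ang_I(x,y)$ to the nearest integer, valid for every admissible couple, and then to upgrade the locally-constant integer into a single global integer $k$ using the path-connectedness of the relevant domain.

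First I would fix $\epsilon\leq 1/4$ and let $(x,y)\in\D\times\D$ with $|x-y|\geq\sqrt\epsilon$. For such a couple, let $k=k(x,y)\in\Z$ be the unique integer with $\Ang_I(x,y)-k\in[-1/2,1/2)$. By Lemma \ref{closed} we have $|\cos(2\pi\Ang_I(x,y))-1|\leq 4\sqrt\epsilon$, that is $\cos(2\pi|\Ang_I(x,y)-k|)\geq 1-4\sqrt\epsilon$. Applying the inequality $\arccos(1-s)\leq 2\sqrt s$ (the same elementary estimate established in the proof of Corollary \ref{AngC0}, coming from $(\arccos(1-s))'\leq 1/\sqrt s$) with $s=4\sqrt\epsilon$ yields
$$2\pi|\Ang_I(x,y)-k|\leq 2\sqrt{4\sqrt\epsilon}=4\sqrt[4]{\epsilon},$$
so that $|\Ang_I(x,y)-k|\leq 2\sqrt[4]{\epsilon}/\pi$. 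Since $\epsilon\leq 1/4$ gives $2\sqrt[4]{\epsilon}/\pi<1/2$, this is the desired pointwise bound with a couple-dependent integer $k$.

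The main obstacle is showing that $k$ does not in fact depend on $(x,y)$, i.e.\ producing a single integer. Here the difficulty is that, unlike in Corollary \ref{AngC0}, the set of admissible couples $\{(x,y)\in\D\times\D:\ |x-y|\geq\sqrt\epsilon\}$ is not the full $\D^2\backslash\Delta$, so I must check it is still path-connected. I would verify this directly: given two admissible couples, one can slide each along paths that keep the two points at distance at least $\sqrt\epsilon$ (for instance, by first moving one point toward the boundary along the line through $x$ and $y$, as in Figure \ref{connect}, then rotating, arguments entirely analogous to those already given in the proof of Corollary \ref{AngC0}); since $\sqrt\epsilon\leq 1/2$ the diameter of $\D$ comfortably accommodates such paths. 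On this connected domain $\Ang_I$ is continuous, and $(x,y)\mapsto\Ang_I(x,y)-k(x,y)$ takes values in $(-1/2,1/2)$ with $k$ integer-valued; hence $k$ is locally constant, and by connectedness it is globally constant. This gives the uniquely defined integer $k$ asserted in the statement, completing the proof.
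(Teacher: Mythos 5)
Your proof is correct and essentially identical to the paper's: the same pointwise bound obtained from Lemma \ref{closed} together with the estimate $\arccos(1-s)\leq 2\sqrt{s}$, followed by the same connectedness-plus-continuity argument showing the integer $k(x,y)$ is locally constant on the path-connected set $\{(x,y)\in\D^2 : |x-y|\geq\sqrt{\epsilon}\}$ (the paper makes the sliding-along-the-line-then-rotating paths explicit, exactly as you sketch). The only discrepancy is a bookkeeping point at the endpoint: the paper's proof restricts to $\epsilon<1/16$ so that $s=4\sqrt{\epsilon}\leq 1$, the range on which the derivative argument justifies $\arccos(1-s)\leq 2\sqrt{s}$, whereas your choice $\epsilon\leq 1/4$ allows $s$ up to $2$, where that inequality fails near $s=2$ — a looseness already latent in the lemma's own statement, not a flaw specific to your argument.
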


\begin{proof}
We consider $\epsilon\in(0,1/16)$ and a couple $(x,y)\in\D$ such that $|y-x|\geq \sqrt\epsilon$. By definition of the floor function there exists a unique $k\in\Z$ such that $2\pi \Ang_I(x,y)-2\pi k \in [-\pi,\pi)$ and we have
$$1\geq \cos(|2\pi \Ang_I(x,y)-2\pi k|)\geq 1-4\sqrt\epsilon\geq 0.$$
The function $\arccos$ is decreasing so we obtain
$$0\leq \arccos(\cos(|2\pi \Ang_I(x,y)-2\pi k|))\leq \arccos(1-4\sqrt\epsilon).$$
The function arccos is defined on $[0,1]$ and of class $C^1$ on $[0,1)$. Moreover we have for every $x\in[0,1)$
$$(\arccos(1-x))'=\frac{1}{\sqrt{2x-x^2}}\leq \frac{1}{\sqrt x}.$$
We obtain that for every $x\in[0,1]$
$$\arccos(1-x)\leq 2\sqrt x.$$
Hence we have
\begin{align*} 
|2\pi \Ang_I(x,y)-2\pi k| &\leq \arccos(1-4\sqrt\epsilon) \\
&\leq 4\sqrt[4\,]{\epsilon}.
\end{align*}
Thus we have
$$|\Ang_I(x,y)-k|\leq 2\sqrt[4\,]{\epsilon}/\pi<1/2.$$
Now we prove that $k$ does not depend of $(x,y)$. Indeed the set of couples $(x,y)\in\D^2$ such that $|x-y|\geq \sqrt\epsilon$ is connected in $\D^2$. Indeed for a couple $(x,y)\in\D^2$ such that $|x-y|\geq \sqrt\epsilon$, let us construct a path from $(x,y)$ to $((-1,0),(1,0))$. \\
We set $d$ the line of $\D^2$ passing through $x$ and $y$. The line $d$ intersects $\Ss^1$ in two points which we denote $\hat x$ and $\hat y$ such that $\hat x$ is closer to $x$ than $y$ and $\hat y$ is closer to $y$ than $x$ as in the previous figure \ref{connect}.\\

 Let us consider the path $\gamma_x:[0,1]\ra \D$ defined by $\gamma_x(t)=t(\hat x-x)+x$ from $x$ to $\hat x$ and the path $\gamma_y:[0,1]$ defines by $\gamma_y(t)=t(\hat y-y)+y$ from $y$ to $\hat y$. So the path $\Gamma:t\mapsto(\gamma_x(t),\gamma_y(t))$ defined on $[0,1]$ sends the couple $(x,y)$ to $(\hat x,\hat y)$.\\
 
Now we  consider $R_\alpha$ the rotation of $\D$ of angle $\alpha=\arg(\hat x)$. Notice that the rotation $R_\alpha^{-1}$ sends $\hat x$  to $(1,0)$. We denote $(R_{t})_{\tin}$ the isotopy from $\id$ to $R_\alpha$ such that for every $\tin$ $R_t$ is the rotation of angle $t\alpha$. Notice that $\hat x=-\hat y$ and so $\hat y$ is send to $(-1,0)$ by $R_\alpha^{-1}$. \\

Hence the composition of the path $\Gamma$ and the path $t\mapsto (R_t^{-1}(\hat x),R_t^{-1}(\hat y))$ sends $(x,y)$ to $((1,0),(-1,0)).$\\

Moreover, $2\sqrt[4\,]{\epsilon}/\pi < 1/2$ so $k$ does not depend on the choice of $(x,y)\in\D$ such that $|x-y|>\sqrt\epsilon$.
 
\end{proof}

With these two lemmas we can give a proof of Theorem \ref{C0cal}.

\begin{proof}[Proof of Theorem \ref{C0cal}]
We can consider $I=(f_t)_{\tin}$ an isotopy from $\id$ to $f$ which fixes a point of $\mathring{\D}$. Up to conjugacy we can suppose that $I$ fixes the origin and we denote $I|_{\Ss^1}$ the restriction of $I$ on $\Ss^1$. We lift $I|_{\Ss^1}$ to an isotopy $(\ti\phi_t)_{\tin}$ on the universal covering space $\R$ of $\Ss^1$ such that $\ti\phi_0=\id$ and set $\ti\phi=\ti\phi_1$. We will prove that $\ti\Cal_2(f,\tphi)=\ti\rho(\tphi)$ and from Theorem \ref{equality} we will obtain
$$\ti\Cal_2(f,\tphi)-\ti\rho(\tphi)=\Cal_1(f) =0.$$

For $q\in\N$ we define the isotopy $I^q$ from $\id$ to $f^q$ as follows. We write $I^q=(f^q_t)_{\tin}$ and for every $z\in\D$ and $t\in[\frac{k-1}{q},\frac{k}{q}]$ we set
$$f^q_t(z)=f_{qt-k+1}\circ \underbrace{(f \circ ...\circ f)}_{k-1 \text{ times}}.$$

 We will denote $\epsilon_n=d_0(f^{q_n},\id)$. For every $k\in\Z$ we can separate the difference between the integral of the angle function of $f^{q_n}$ and $k$ into two parts as follows

\begin{equation}\label{caldecoupe}
\begin{matrix}
\int\int_{\D \times\D} \Ang_{I^{q_{n}}}(x,y)\omega(y)\omega(x) -k =\int_\D \left(\int_{B_{\sqrt{\epsilon_n}}(x)} \Ang_{I^{q_n}} (x,y)\omega(y)-k\right)\omega(x)\\
 \qquad \qquad  \qquad  \qquad  \qquad  \qquad \qquad \qquad  +\int_\D \left(\int_{B^c_{\sqrt{\epsilon_n}}(x)} \Ang_{I^{q_n}} (x,y) \omega(y)-k\right)\omega(x),
\end{matrix}
\end{equation}

where $B^c_{\sqrt{\epsilon_n}}(x)$ is the complementary of $B_{\sqrt{\epsilon_n}}(x)$ in $\D$.\\

We can suppose that $\epsilon_n<1/16$ and by Lemma \ref{closeAng}, there exists a unique $k_n\in\Z$ such that for each couple $(x,y)\in\D\times\D$ such that $|y-x|\geq \sqrt\epsilon_n$ we have
\begin{equation}\label{proche}
|\Ang_{I^{q_n}} (x,y)- k_n|\leq 2\sqrt[4\,]{\epsilon_n}/\pi.
\end{equation}
Moreover, by definition there exists a sequence $(\xi_n)_{n\in\N}$ of $1$-periodic functions $\xi_n:\R\ra\R$ such that $||\xi_1||_\infty\leq 1$ for every $n\in\N$ and such that for every $y\in\Ss^1$ and every lift $\ti y\in\R$ of $y$ we have
$$\Ang_{I^{q_n}}(0,y)=\tphi^{q_n}(\ti{y})-\ti{y}=q_n\ti\rho(\tphi)+\xi_n(\ti{y}).$$
So, for every $y\in\Ss^1$ we have
$$|\Ang_{I^{q_n}}(0,y)-k_n|=|q_n\ti\rho(\tphi) +\xi_n(\ti{y})- k_n|\leq 2\sqrt[4\,]{\epsilon_n}/\pi,$$
where $\ti{y}$ is a lift of $y$. 
Hence we obtain
$$|q_n(\ti\rho(\tphi)-k_n)|\leq 2\sqrt[4\,]{\epsilon_n}/\pi+1.$$
Thus we have 
$$\ti\rho(\tphi)=\lim_{n\ra\infty} \frac{k_n}{q_n}.$$

By equation \ref{proche} we obtain
\begin{equation}\label{int2}
\left|\int_\D\left( \int_{B^c_{\sqrt{\epsilon_n}}(x)} (\Ang_{I^{q_n}} (x,y)-k_n) \omega(y)\right)\omega(x)\right| \leq 4\sqrt[4\,]{\epsilon_n}/\pi.
\end{equation}

We know that for every couple $(x,y)\in\D^2\backslash\Delta$ and for every $n\in\N$ we have
\begin{equation}\label{angsomme}
\Ang_{I^{q_n}}(x,y)=\Ang_{I}(x,y)+\Ang_{I}(f(x),f(y))+...+\Ang_{I}(f^{q_n-1}(x),f^{q_n-1}(y)).
\end{equation}
Hence for every $n\in\N$ the angle function satisfies 
\begin{equation}\label{angmaj}
||\Ang_{I^{q_n}}||_{\infty}\leq q_n ||\Ang_{I}||_{\infty}.
\end{equation}
We can estimate the first integral of equation \ref{caldecoupe} as follows 
\begin{equation}
\left|\int_\D \left( \int_{B_{\sqrt{\epsilon_n}}(x)} (\Ang_{I^{q_n}} (x,y)-k_n)\omega(y)\right)\omega(x)\right|\leq \epsilon_n(q_n||\Ang_I||_\infty+|k_n|).
\end{equation}
So we can deduce from the previous equations a new estimation of the Calabi invariant 

\begin{equation}\label{caldecoupe_n}
\left|\int\int_{\D \times\D} \Ang_{I^{q_{n}}}(x,y)\omega(y)\omega(x)-k_n\right|\leq 2\sqrt[4\,]{\epsilon_n}/\pi +\epsilon_n (q_n||\Ang_f||_\infty +k_n).
\end{equation}
By definition we obtain

\begin{equation}
\left|\ti{\Cal}_2(f,\tphi)-\frac{k_n}{q_n}\right|\leq \frac{2\sqrt[4\,]{\epsilon_n} }{q_n\pi}+ \epsilon_n ||\Ang_f||_\infty+\epsilon_n\frac{k_n}{q_n}.
\end{equation}
Hence we have

\begin{align*}
\left|\ti{\Cal}_2(f,\tphi)-\ti\rho(\ti\phi)\right|&\leq\left|\ti{\Cal}(f,\tphi)-\frac{k_n}{q_n}\right|+\left|\ti\rho(\ti\phi)-\frac{k_n}{q_n}\right|\\
&\leq \frac{4\sqrt[4\,]{\epsilon_n} }{q_n\pi}+ \epsilon_n ||\Ang_f||_\infty+\frac{1}{q_n}+\epsilon_n\frac{k_n}{q_n}.
\end{align*}

By taking the limit on $n\in\N$, we conclude that 
$$\ti{\Cal}_2(f,\tphi)=\ti\rho(\tphi).$$
\end{proof}

\begin{rem}
If we consider a sequence $(\ti g_n=(g_n,\tphi_n))_{n\in\N}\in\ti\Diff^1_\omega(\D)$ which converges to $\ti f =(f,\tphi)\in \ti\Diff^1_\omega(\D)$ in the $C^0$ topology where for each $n\in\N$, $g_n$ is a periodic diffeomorphism of the disk and $f$ is an irrational pseudo-rotation, then the previous method fails to prove that $\ti\Cal_2(g_n,\tphi_n)$ converges to $\ti{\Cal}_2(f,\tphi)$.
It is easy to see that $\Ang_{\ti f}$ is close to $\Ang_{\ti g_n}$ but if we compute the difference $\ti{\Cal}(g_n,\tphi_n)-\ti{\Cal}(f,\tphi),$ as we did in equation \ref{caldecoupe}, we do not have a control of $||\Ang_{\ti g_n}||_\infty$ so we cannot estimate properly the integral 
$$\int_{x}\int_{y\in B_{\sqrt{\epsilon_n}(x)}}\Ang_{\ti g_n}(x,y)\omega(x)\omega(y),$$
where $\epsilon_n=||g_n-f||_\infty$.
\end{rem}

			\section{Examples}\label{examples}

In this section, we will be interested in irrational pseudo rotations with specific rotation numbers. \\	

\textbf{Best approximation:} Let Any irrational number $\alpha\in\R\backslash \Q$ can be written as a continued fraction where $(a_i)_{i\geq1}$ is a sequence of integers $\geq 1$ and $a_0=\lfloor \alpha\rfloor$. Conversely, any sequence $(a_i)_{i\in\N}$  corresponds to a unique number $\alpha$. We define two sequences $(p_n)_{n\in\N}$ and $(q_n)_{n\in\N}$ as follows

\begin{align*}
&p_n=a_np_{n-1}+p_{n_2} \text{ for } n\geq2, & p_0=a_0, \ &p_1=a_0a_1+1\\
&q_n=a_nq_{n-1}+q_{n-2} \text{ for } n\geq2, & q_0=1, \ &q_1=a_1.
\end{align*}

The sequence $(p_n/q_n)_{n\in\N}$ is called the \emph{best approximation} of $\alpha$ and for every $n\geq1$ we have
$$ \{q_{n-1}\alpha\}\leq \{k\alpha\}, \ \forall k<q_n$$
where $\{x\}$ is the fractional part of $x\in\R$. And for every $n\in\N$ we have\\
\begin{equation}\label{Liouville}
\frac{1}{q_n(q_n+q_{n+1})}\leq (-1)^n(\alpha-p_n/q_n,)\leq \frac{1}{q_nq_{n+1}}.
\end{equation}
The numbers $q_n$ are called the \emph{approximation denominators} of $\alpha$.\\

		\subsection{An example of $C^0$ rigidity, the super Liouville type}

In this section, we show that a $C^1$ irrational pseudo rotation with a super Liouville rotation number satisfies the assumptions of Theorem \ref{C0cal}.\\
		
\textit{Super Liouville.} A real number $\alpha\in\R/\Z\backslash\Q$ is called \emph{super Liouville} if the sequence $(q_n)_{n\in\N}$ of the approximation denominators of $\alpha$ satisfies 
\begin{equation}\label{Liouvilledef}
\limsup_n q_n^{-1}\log(q_{n+1})=+\infty.
\end{equation}

If we consider a real $\alpha\in\R$ which has super Liouville type then for each $k\in\Z$ the real $\alpha+k$ is also super Liouville and to simplify the notations we will say that an element $\ti\alpha\in\Tt^1$ is super Liouville.\\

Bramham already showed in \cite{Bram2} that any $C^\infty$ irrational pseudo-rotation $f$ of the disk with super Liouville rotation number is $C^0$ rigid, meaning that $f$ is the $C^0$-limit of a sequence of periodic diffeomorphisms. More recently Le Calvez \cite{CAL1} proved that any $C^1$ irrational pseudo-rotation which is $C^1$ conjugated to a rotation on the boundary is $C^0$ rigid. These results go as follows.

\begin{theorem}\label{decompo}
Let us consider either a $C^\infty$ irrational pseudo rotation or a $C^1$ irrational pseudo rotation $f$ which is $C^1$ conjugated to a rotation on the boundary.We consider $\alpha\in\R$ such that $\alpha+\Z$ is equal to the rotation number of $f$. For a sequence of rationals $(\frac{p_n}{q_n})_{n\in\N}$ which converges to $\alpha$ there exists a sequence $(g_n)_{n\in\N} :\D\ra \D$ of $q_n$-periodic diffeomorphims of the unit disk which converges to $f$ for the $C^0$ topology.\\
 Moreover there exists a constant $C$ depending on $f$ such that for every $n\in\N$ we have
$$d_0(f,g_n)< C(q_n\alpha-p_n)^{\frac{1}{2}}.$$
\end{theorem}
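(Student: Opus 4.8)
Up to the explicit rate of convergence, this statement is exactly the periodic approximation theorem of Bramham \cite{Bram} in the $C^\infty$ case and of Le Calvez \cite{CAL1} in the $C^1$ case of a pseudo-rotation that is $C^1$-conjugate to a rotation on $\Ss^1$, so the plan is to recall these two constructions and then to read off the quantitative $C^0$ bound. In both situations the approximating maps are produced by a fast approximation scheme à la Anosov--Katok \cite{AK}: one builds smooth conjugacies $h_n$ of $\D$ and sets $g_n=h_n\circ R_{p_n/q_n}\circ h_n^{-1}$, where $R_{p_n/q_n}$ denotes the rigid rotation of angle $2\pi p_n/q_n$. Each such $g_n$ is automatically $q_n$-periodic, being conjugate to a rotation whose $q_n$-th iterate is the identity, so the first step is simply to record that the maps furnished by \cite{Bram,CAL1} are periodic of the prescribed period $q_n$ and converge to $f$ in the $C^0$ topology.

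The only new point is the rate $d_0(f,g_n)<C(q_n\alpha-p_n)^{1/2}$. I would obtain it by measuring the defect of periodicity of $f$ at scale $q_n$. Since $f$ is an irrational pseudo-rotation of rotation number $\alpha$, every orbit winds around the origin with average angular speed $\alpha$; after $q_n$ iterations the accumulated angle differs from the integer $p_n$ precisely by $q_n\alpha-p_n$, which is the quantity controlled by the convergents through the inequality \ref{Liouville}. One then checks that the conjugacies $h_n$ can be chosen so that the $C^0$ size of the correction $f\mapsto g_n$ is governed by this same defect, the square root appearing when one converts an averaged (angular, hence area-weighted) estimate into a genuine pointwise displacement bound, in the same spirit as the passage from a cosine estimate to a displacement estimate in Lemmas \ref{closed} and \ref{closeAng}.

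The substantial content lies entirely in the two cited constructions, which is where the hypotheses diverge and where I would not attempt to reprove anything: the general $C^\infty$ case rests on Bramham's finite-energy foliations and pseudoholomorphic curve techniques, while Le Calvez's $C^1$ argument exploits the boundary rigidity to build the $h_n$ by more elementary means. Granting these, the remaining and genuinely delicate step, and the one I would carry out in detail, is to verify that the constant $C$ depends only on $f$: this amounts to controlling the relevant norms of the conjugacies $h_n$ uniformly along the scheme, so that $q_n\alpha-p_n$ is the only $n$-dependent quantity entering the bound. I would conclude by noting that feeding this estimate into the super Liouville hypothesis \ref{Liouvilledef} produces, along a suitable subsequence, iterates $f^{n_k}$ that converge to $\id$ in the $C^0$ topology, which is exactly the hypothesis required to apply Theorem \ref{C0cal}.
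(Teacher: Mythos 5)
Your instinct to treat this statement as an appeal to the literature is exactly what the paper does: Theorem \ref{decompo} is stated without any proof, imported from Bramham \cite{Bram} for the $C^\infty$ case and Le Calvez \cite{CAL1} for the $C^1$ case with the boundary hypothesis. But you misjudge one point: the quantitative estimate $d_0(f,g_n)<C(q_n\alpha-p_n)^{1/2}$ is already part of those cited theorems, not ``the only new point'' left to establish. This matters, because the portion of your proposal that is not a citation would not survive scrutiny on its own: asserting that ``the conjugacies $h_n$ can be chosen so that the $C^0$ size of the correction is governed by $q_n\alpha-p_n$,'' with the square root appearing ``when one converts an averaged estimate into a pointwise one,'' is a restatement of the conclusion rather than an argument. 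Uniform control of the approximating maps along the sequence is precisely the analytic heart of \cite{Bram} and \cite{CAL1} --- obtained there by pseudoholomorphic-curve and finite-energy-foliation techniques, and by Le Calvez's finite-dimensional method exploiting the boundary rigidity, respectively --- and nothing in the present paper supplies a substitute; in particular Lemmas \ref{closed} and \ref{closeAng} estimate angle functions of maps \emph{already known} to be $C^0$-close to the identity, so they cannot be used to produce the periodic approximations or the rate.

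A second inaccuracy in your framing: neither construction is a fast-approximation scheme \`a la Anosov--Katok. The method of \cite{AK} builds \emph{examples} of pseudo-rotations; it does not approximate a given one. While each $q_n$-periodic diffeomorphism of $\D$ is indeed conjugate to the rotation $R_{p_n/q_n}$ (by Ker\'ekj\'art\'o's theorem), that is a property of the output, not the mechanism of either proof. Since the cited theorems do contain the full statement including the rate, your proof is correct as a citation --- which is all the paper itself offers --- but you should either delete the sketched ``derivation'' of the rate or mark it explicitly as contained in the references; as written it suggests a gap is being filled when none exists, and the sketch would not fill it if one did. Your closing remark correctly anticipates how the theorem feeds, via the super Liouville condition \ref{Liouvilledef}, into Theorem \ref{C0cal} and Corollary \ref{Liouvillecal}.
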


We deduce the following corollary. 

\begin{coro}\label{Liouvillecal}
Let us consider either a $C^\infty$ irrational pseudo rotation or a $C^1$ irrational pseudo rotation $f$ which is $C^1$ conjugated to a rotation on the boundary. If the rotation number of $f$ is super Liouville then we have 
$$\Cal_1(f)=0.$$
\end{coro}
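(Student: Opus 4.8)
The plan is to feed the periodic approximations supplied by Theorem \ref{decompo} into the $C^0$-rigidity criterion of Theorem \ref{C0cal}. Fix $\alpha\in\R$ with $\alpha+\Z$ equal to the rotation number of $f$, and let $(p_n/q_n)_{n\in\N}$ be its best approximation sequence, so the $q_n$ are the approximation denominators appearing in the super Liouville condition \ref{Liouvilledef}. By Theorem \ref{decompo} there are a constant $C$ and a sequence $(g_n)_{n\in\N}$ of $q_n$-periodic diffeomorphisms of $\D$, so that $g_n^{q_n}=\id$ and $d_0(f,g_n)<C(q_n\alpha-p_n)^{1/2}$. Combining this with the right-hand inequality of \ref{Liouville}, namely $|q_n\alpha-p_n|=q_n|\alpha-p_n/q_n|\le q_{n+1}^{-1}$, gives
\begin{equation*}
d_0(f,g_n)<C\,q_{n+1}^{-1/2}.
\end{equation*}

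Next I would compare the iterate $f^{q_n}$ with $g_n^{q_n}=\id$. Since $f$ is $C^1$ on the compact disk it is Lipschitz; set $L=\max(||Df||_\infty,||Df^{-1}||_\infty)$, which satisfies $L\ge 1$ and makes each $f^{\pm j}$ Lipschitz with constant $L^{j}$. The telescoping identity
\begin{equation*}
f^{q_n}-g_n^{q_n}=\sum_{j=0}^{q_n-1}\Big(f^{q_n-1-j}\circ(f\circ g_n^{\,j})-f^{q_n-1-j}\circ(g_n\circ g_n^{\,j})\Big),
\end{equation*}
together with the Lipschitz bound on each $f^{q_n-1-j}$ and the estimate $||f\circ g_n^{\,j}-g_n\circ g_n^{\,j}||_\infty\le ||f-g_n||_\infty$, yields
\begin{equation*}
||f^{q_n}-\id||_\infty\le ||f-g_n||_\infty\sum_{j=0}^{q_n-1}L^{q_n-1-j}\le q_n L^{q_n}\,d_0(f,g_n).
\end{equation*}
The same computation applied to $f^{-1}$ and $g_n^{-1}$ (which is again $q_n$-periodic, with $d_0(f^{-1},g_n^{-1})=d_0(f,g_n)$) bounds $||f^{-q_n}-\id||_\infty$ by the same quantity, so that $d_0(f^{q_n},\id)\le C\,q_n L^{q_n}q_{n+1}^{-1/2}$.

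Now comes the step where the super Liouville hypothesis is essential. Taking logarithms,
\begin{equation*}
\log\!\left(q_n L^{q_n}q_{n+1}^{-1/2}\right)=\log q_n+q_n\log L-\frac{1}{2}\log q_{n+1}.
\end{equation*}
By \ref{Liouvilledef} there is a subsequence $(n_k)$ along which $q_{n_k}^{-1}\log q_{n_k+1}\to+\infty$. Writing $\frac{1}{2}\log q_{n_k+1}=\frac{1}{2}q_{n_k}\big(q_{n_k}^{-1}\log q_{n_k+1}\big)$, this term dominates both $q_{n_k}\log L$ and $\log q_{n_k}$ for large $k$, so the displayed expression tends to $-\infty$; hence $d_0(f^{q_{n_k}},\id)\to 0$. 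Thus $(f^{q_{n_k}})_k$ converges to the identity in the $C^0$ topology, and Theorem \ref{C0cal} applied to the sequence of integers $(q_{n_k})_k$ gives $\Cal_1(f)=0$. (Both the $C^\infty$ and the $C^1$ boundary-conjugate cases are covered, since Theorem \ref{decompo} applies to each.)

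The main obstacle is precisely the competition in the last displayed formula: the Lipschitz constants of the iterates grow like $L^{q_n}$, which for a merely Liouville number could swamp the decay $q_{n+1}^{-1/2}$ of $d_0(f,g_n)$. The super Liouville condition is exactly what guarantees, along a subsequence, that $\log q_{n+1}$ outgrows $q_n$ fast enough to absorb the factor $L^{q_n}$ (and the harmless extra factor $q_n$), thereby making the $C^0$ criterion of Theorem \ref{C0cal} applicable.
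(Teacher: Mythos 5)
Your proof is correct and follows essentially the same route as the paper: quantitative periodic approximation via Theorem \ref{decompo}, the bound $d_0(f,g_n)<C\,q_{n+1}^{-1/2}$ from \ref{Liouville}, a Lipschitz estimate $d_0(f^{q_n},\id)\lesssim q_nL^{q_n}q_{n+1}^{-1/2}$ (the paper states $K^{q_n}\epsilon_n$ without the harmless $q_n$ factor), and the super Liouville condition to kill the exponential along a subsequence before invoking Theorem \ref{C0cal}. Your version is in fact slightly more careful than the paper's, spelling out the telescoping identity, treating the inverse iterates explicitly, and correctly phrasing the convergence along a subsequence.
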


\begin{proof}[Proof of Corollary \ref{Liouvillecal}]
Let us consider $f$ which is either a $C^\infty$ irrational pseudo rotation or a $C^1$ irrational pseudo rotation. We consider $\alpha\in\R$ such that $\alpha+\Z$ is equal to the rotation number of $f$. We will prove that $f$ satisfies the hypothesis of Theorem \ref{C0cal}. We consider $\alpha\in\R$ such that $\alpha+\Z$ is equal to the rotation number of $f$ and we consider a sequence of rationals $(p_n/q_n)_{n\in\N}$ which converges to $\alpha$ such that $q_n$ satisfies equation \ref{Liouville}. Let $(g_n)_{n\in\N}$ be the sequence of $q_n$ periodic diffeomorphisms given by Theorem \ref{decompo} associated to $f$ and the sequence $(p_n/q_n)_{n\in\N}$. We denote by $K$ the $C^1$ norm of $f$ and we set $\epsilon_n=C(q_n\alpha-p_n)^{1/2}$ where $C$ is the constant given by Theorem \ref{decompo}.\\

For all $k\in\N$ and each $n\in\N$ the following inequality holds 
\begin{equation}
d_0(f^k,g_n^k)<K^k\epsilon_n.
\end{equation}
By equation \ref{Liouville} we can majorate $\epsilon_n$ by $\frac{C}{q_{n+1}}$ to obtain for $k=q_n$ the inequality
\begin{equation}\
d(f^{q_n},id) < K^{q_n}\frac{C}{(q_{n+1})^{\frac{1}{2}}}.
\end{equation}
Since $K\geq 1$, equation \ref{Liouvilledef} assures that 
$$\limsup_n \frac{K^{q_n}}{(q_{n+1})^{1/2}}=0.$$
Thus we obtain that
$$\limsup_n d_0(f^{q_n},\id) =0.$$
Hence up to a subsequence we can suppose that 
$$d_0(f^{q_n},id)\ra 0.$$
So $f$ satisfies the hypothesis of Theorem \ref{C0cal} and we conclude
$$\Cal_1(f)=0.$$
\end{proof}

		\subsection{An example of $C^1$-rigidity, the non Bruno type}

\textbf{Bruno type.} A number $\alpha\in\R\backslash\Q$ will be said to be Bruno type if the sequence $(q_n)_{n\in\N}$ of the  approximation denominators of $\alpha$ satisfies
$$\sum_{n=0}^\infty \frac{\log(q_{n+1})}{q_n}<+\infty.$$

If we consider $\alpha\in\R$ which is not Bruno type then for each $k\in\Z$ the real $\alpha+k$ is also not Bruno type and to simplify the notations we will say that an element $\ti\alpha\in\Tt^1$ is non Bruno type.\\
	
A. Avila, B. Fayad, P. Le Calvez, D. Xu and Z. Zhang proved in \cite{AVI} that if we consider a number $\alpha\in\R\backslash\Q$ which is not Bruno type, for $H>1$ there exists a subsequence $q_{n_k}$ of the sequence of the approximation denominators of $\alpha$ such that for every $n\in\N$ $q_{n_{j+1}}\geq H^{q_{n_j}}$ and there exists an infinite set $\mathrm{J}\subset \N$ such that for every $j\in \mathrm{J}$ we have
\begin{equation}\label{Brunotype}
 \{q_{n_j}\alpha\}<\e^{-\frac{q_{n_j}}{j^2}}.
\end{equation}
We can also find the following result in the same paper.

\begin{prop}\label{propnonBruno} 
Let us consider a $C^2$ irrational pseudo rotation $f\in \Diff_\omega^1(\D)$. Suppose that $\rho(f|_{\Ss^1})$ is not Bruno type, then the sequence $q_{n_j}$ satisfies
$$d_1(f^{q_{n_j}},Id)\ra 0.$$

\end{prop}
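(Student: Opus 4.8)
The plan is to combine the super-exponential rational approximation coming from the non-Bruno hypothesis with \emph{a priori} bounds on the first derivatives of the iterates $f^{q_{n_j}}$, and then to promote $C^0$ convergence to $C^1$ convergence by a compactness argument. Throughout I write $\alpha=\rho(f|_{\Ss^1})$ and keep the subsequence $q_{n_j}$ furnished by equation \ref{Brunotype}, so that $\{q_{n_j}\alpha\}<\e^{-q_{n_j}/j^2}$ along the infinite set $\mathrm J$; the super-exponential smallness of $\{q_{n_j}\alpha\}$ is the engine of the whole argument.

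First I would record the $C^0$ statement. Since $f$ is a pseudo-rotation, its restriction to $\Ss^1$ is a $C^2$ circle diffeomorphism with irrational rotation number $\alpha$, hence conjugate to $R_\alpha$ by Denjoy's theorem, and every point of $\mathring{\D}$ turns around $0$ with rotation number $\alpha$. The good approximation $\{q_{n_j}\alpha\}<\e^{-q_{n_j}/j^2}$ then forces $f^{q_{n_j}}$ to be $C^0$-close to $\id$ on the whole disk; this is exactly the mechanism already used for Theorem \ref{decompo} and Corollary \ref{Liouvillecal}, and I would reuse it to get $d_0(f^{q_{n_j}},\id)\ra 0$.

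The heart of the argument, and the step I expect to be the main obstacle, is to establish the \emph{a priori} bounds: that the family $(f^{q_{n_j}})_j$ is bounded in $C^1$ with a uniform modulus of continuity for its differentials, equivalently that it is precompact in the $C^1$ topology. On the boundary this is the classical Denjoy--Koksma estimate: because $f$ is $C^2$, the function $\log (f|_{\Ss^1})'$ has bounded variation $V$ and integrates to $0$ against the invariant measure, so $\|\log (f^{q_n}|_{\Ss^1})'\|_\infty\leq V$ uniformly in $n$, which keeps the boundary derivatives bounded away from $0$ and $\infty$. The real difficulty is to propagate this distortion control into the interior of $\D$; this is where area-preservation, the $C^2$ regularity, and the absence of interior periodic points all enter, and it is precisely the \emph{a priori} bound proved in \cite{AVI}. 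I would isolate it as the key lemma and treat the remainder as formal.

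Granting the \emph{a priori} bounds, I would conclude as follows. By Arzelà--Ascoli the sequence $(f^{q_{n_j}})_j$ is precompact in $C^1(\D)$; any $C^1$-limit of a subsequence coincides with its $C^0$-limit, which is $\id$ by the second paragraph, so every $C^1$-convergent subsequence converges to $\id$ and therefore the full sequence converges to $\id$ in $C^1$. Finally, since the $f^{q_{n_j}}$ are diffeomorphisms with uniformly bounded $C^1$ norms, their inverses $f^{-q_{n_j}}$ also converge to $\id$ in $C^1$, so that $d_1(f^{q_{n_j}},\id)\ra 0$, as claimed. An alternative to the compactness step is the interpolation inequality $\|u\|_{C^1}\leq C\|u\|_{C^0}^{1/2}\|u\|_{C^2}^{1/2}$ applied to $u=f^{q_{n_j}}-\id$, which works provided the bound controls the $C^2$ norm at a sub-exponential rate in $q_{n_j}$; the compactness route is cleaner because it needs only $C^1$ precompactness.
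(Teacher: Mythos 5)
The paper does not actually prove this proposition: it is quoted verbatim from \cite{AVI} (``We can also find the following result in the same paper''), so the only comparison available is with that citation. Measured against it, your proposal is essentially circular. The ``key lemma'' you isolate --- propagating the boundary distortion control into the interior so as to get $C^1$ precompactness of $(f^{q_{n_j}})_j$ --- is not an off-the-shelf a priori estimate that can be cited and then finished formally; together with your claimed $C^0$ step it \emph{is} the content of the proposition in \cite{AVI}. Once you write that it ``is precisely the a priori bound proved in \cite{AVI}'', you have deferred the whole statement, not an auxiliary lemma, and the Arzelà--Ascoli endgame (which is fine as far as it goes, including the passage to inverses, since $Df^{q_{n_j}}\to \mathrm{Id}$ keeps the differentials invertible with uniformly bounded inverses) adds nothing that was in doubt.

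Worse, the one step you do claim to carry out --- the $C^0$ convergence $d_0(f^{q_{n_j}},\id)\to 0$ ``by the mechanism of Theorem \ref{decompo} and Corollary \ref{Liouvillecal}'' --- fails under the non-Bruno hypothesis. First, Theorem \ref{decompo} is only available for $C^\infty$ pseudo-rotations or for those $C^1$-conjugate to a rotation on the boundary, hypotheses absent here. Second, even granting periodic approximations $g_n$ with $d_0(f,g_n)<C(q_n\alpha-p_n)^{1/2}$, the mechanism of Corollary \ref{Liouvillecal} is the iteration bound $d_0(f^{q_n},\id)\leq K^{q_n}\,d_0(f,g_n)$ with $K$ the $C^1$ norm of $f$, and it closes only when $\limsup_n q_n^{-1}\log q_{n+1}=+\infty$, i.e.\ in the super Liouville case. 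The non-Bruno hypothesis only gives, via equation \ref{Brunotype}, $\{q_{n_j}\alpha\}<\e^{-q_{n_j}/j^2}$ for $j\in\mathrm{J}$, so the best this route yields is
$$d_0(f^{q_{n_j}},\id)\leq C\,K^{q_{n_j}}\,\e^{-q_{n_j}/(2j^2)},$$
which diverges as soon as $2j^2\log K>1$: the exponent rate $1/j^2$ degrades with $j$ while $\log K$ is fixed, so the argument collapses for all but finitely many $j$ unless $K=1$. The Denjoy--Koksma bound you invoke is correct but lives only on $\Ss^1$, and the rotation number of interior orbits controls angular averages, not displacements, so no soft argument yields interior $C^0$ (let alone $C^1$) smallness. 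This is exactly why \cite{AVI} needs a genuinely different scheme; reconstructing that scheme, rather than citing it, is what an actual proof of this proposition would require.
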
 

Hence a $C^2$ irrational pseudo rotation $f\in \Diff_\omega^1(\D)$ satisfies the hypothesis of Corollary \ref{C1} and we obtain the following corollary.

\begin{coro}\label{nonBruno}
Let us consider  a $C^2$ irrational pseudo rotation $f\in \Diff_\omega^1(\D)$. Suppose that $\rho(f)$ is not Bruno type, then we have 
$$\Cal_1(f)=0.$$
\end{coro}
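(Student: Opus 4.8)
The plan is to observe that this corollary follows immediately by combining Proposition \ref{propnonBruno} with the $C^1$-rigidity criterion of Proposition \ref{C1}. The only point deserving a remark is that the hypothesis ``$\rho(f)$ is not Bruno type'' coincides with the hypothesis ``$\rho(f|_{\Ss^1})$ is not Bruno type'' appearing in Proposition \ref{propnonBruno}. Indeed, for an irrational pseudo-rotation $f$ the rotation number around the origin equals the rotation number of the restriction $f|_{\Ss^1}$, as recalled in Section \ref{preliminaries}, so the two conditions are literally the same.

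First I would fix $\alpha\in\R$ with $\alpha+\Z=\rho(f)=\rho(f|_{\Ss^1})$ and apply Proposition \ref{propnonBruno}. Since $f$ is a $C^2$ irrational pseudo-rotation of $\D$ whose rotation number is not Bruno type, that proposition yields a subsequence $(q_{n_j})_{j\in\N}$ of the approximation denominators of $\alpha$ along which
$$\lim_{j\ra\infty} d_1(f^{q_{n_j}},\id)=0.$$
In other words, the iterates $f^{q_{n_j}}$ converge to the identity in the $C^1$ topology, which is exactly the hypothesis required by Proposition \ref{C1}.

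I would then invoke Proposition \ref{C1} to conclude. Its argument uses only the homogeneity relation $\Cal_1(f^{q_{n_j}})=q_{n_j}\,\Cal_1(f)$ together with the continuity of $\Cal_1$ in the $C^1$ topology established in Corollary \ref{ContinuityCAL1}: passing to the limit in $j$ forces $\Cal_1(f)=0$. There is no genuine obstacle at this stage; all the dynamical content — the existence of a rigidity subsequence $q_{n_j}$ coming from the work of \cite{AVI} — is packaged inside Proposition \ref{propnonBruno}, and the remaining work is the two-line deduction above, whose only subtlety is the identification of the two ``non-Bruno'' hypotheses through the equality $\rho(f)=\rho(f|_{\Ss^1})$.
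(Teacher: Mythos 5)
Your proposal is correct and is exactly the paper's own argument: the paper deduces the corollary in one line by noting that Proposition \ref{propnonBruno} supplies a subsequence with $d_1(f^{q_{n_j}},\id)\ra 0$, so that Proposition \ref{C1} (homogeneity of $\Cal_1$ plus its $C^1$-continuity from Corollary \ref{ContinuityCAL1}) applies. Your additional remark identifying $\rho(f)$ with $\rho(f|_{\Ss^1})$ is a point the paper handles implicitly via the observation in Section \ref{preliminaries}, so nothing is missing.
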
	

\bibliographystyle{plain}
\bibliography{bibliographie}
\end{document}